\numberwithin{equation}{section}  
\numberwithin{figure}{section}    
\numberwithin{table}{section}     
\theoremstyle{plain}
\newtheorem{lem}{Lemma}[section]
\newtheorem{thm}[lem]{Theorem}
\newtheorem{cor}[lem]{Corollary}
\theoremstyle{definition}
\newtheorem{defn}{Definition}[section]
\theoremstyle{remark}
\newtheorem{rem}{Remark}[section]
\renewcommand{\thefigure}{\thesection.\arabic{figure}}
\def \d{\ensuremath{\mathrm{d}}}
\begin{document}
\renewcommand{\figurename}{Figure}
\renewcommand{\thesubfigure}{(\alph{subfigure})}
\renewcommand{\thesubtable}{(\alph{subtable})}
\makeatletter
\renewcommand{\p@subfigure}{\thefigure~}

\makeatother
\title{\large\bf Hybridization theory for plasmon resonance in metallic nanostructures}
\author{
Qi Lei\thanks
{School of Mathematics, Hunan University, Changsha 410082, Hunan Province, China.
Email: leiqi@hnu.edu.cn}
\and
Hongyu Liu\thanks
{Department of Mathematics, City University of Hong Kong, Kowloon, Hong Kong, China.
Email:  hongyu.liuip@gmail.com; hongyliu@cityu.edu.hk}
\and
Zhi-Qiang Miao\thanks
{School of Mathematics and Statistics, Central South University, Changsha 410083, Hunan Province, China, and
Department of Mathematics, City University of Hong Kong, Kowloon, Hong Kong, China.
Email: zhiqmiao@csu.edu.cn; zhiqmiao@cityu.edu.hk}
\and
Guang-Hui Zheng\thanks
{School of Mathematics, Hunan University, Changsha 410082, Hunan Province, China.
Email: zhenggh2012@hnu.edu.cn; zhgh1980@163.com}
}
\date{}
\maketitle

\begin{center}{\bf ABSTRACT}
\end{center}\smallskip
In this paper, we investigate the hybridization theory of plasmon resonance in metallic nanostructures, which has been validated by the authors in \cite{prodan2003hybridization} through a series of experiments. In an electrostatic field, we establish a mathematical framework for the Neumann-Poincar\'{e}(NP) type operators for metallic nanoparticles with general geometries related to core and shell scales. We calculate the plasmon resonance frequency of concentric disk metal nanoshells with normal perturbations at the interfaces by the asymptotic analysis and perturbation theory to reveal the intrinsic hybridization between solid and cavity plasmon modes. The theoretical finding are convincingly supported by extensive numerical experiments. Our theory corroborates and strengthens that by properly enriching the materials structures as well as the underlying geometries, one can induce much richer plasmon resonance phenomena of practical significance.

\smallskip
{\bf keywords}: hybridization theory; plasmon resonance; Neumann-Poincar\'{e} type operator; perturbation

\section{Introduction}

    When an incident wave of a particular frequency strikes a nanoparticle, the incident wave pushes free electrons onto the metal surface. The Coulomb force strengthens the strong field inside and around the nanoparticle, which triggers the surface plasmon resonance.
    It is possible to identify surface enhancement effects and obtain high-resolution spectroscopic imaging by combining near-field microscopy with nanostructures with plasmon resonance capabilities.
    Surface-enhanced Raman spectroscopy (SERS), in which the molecules to be measured are adsorbed on the surface of metallic nanostructures, is the most common application of plasmon resonance, which is also crucial for molecular identification. When laser light is shone onto these nanostructures, the plasmon resonance creates a local electric field enhancement effect that amplifies the molecules' Raman scattering signals.
    The SERS approach can be used to identify and detect compounds with great sensitivity because each molecule has a unique Raman scattering spectrum.  Furthermore, photothermal therapy, optical filters, medical imaging, and electromagnetic shielding frequently employ the plasmonic resonance effect of metal nanoshells \cite{ahmadi2014potential,anker2008biosensing,baffou2010mapping,beier2007application,hirsch2006metal,jain2006calculated,loo2005immunotargeted,reed1972methods}.

    Grieser was the first to provide a rigorous mathematical framework for plasmon by offering some more sophisticated mathematical approaches to explore the plasmonic eigenvalues and situating the metallic particle in a natural mathematical setting \cite{grieser2014plasmonic}.
    Ammari and his colleagues were the first to accurately define the notion of plasmonic resonance mathematically and to use the Helmholtz equation to model the propagation of electromagnetic waves. They examined the enhanced scattering and absorption characteristics of nanoparticles exhibiting plasmon resonance, as well as how plasmon effects can be used to improve super-resolution and super-focusing in optical applications. They also methodically examined the effects of plasmon resonance in response to changes in the size and shape of nanoparticles \cite{ammari2017mathematical}. Subsequently, they then examined the plasmon resonance of nanoparticles using the full Maxwell equations governing light propagation. They created an effective medium theory specifically designed for resonant plasmonic systems by utilizing the layer potential technique. Furthermore, they also determined the Maxwell-Garnett theory's effective volume fraction conditions within the setting of plasmon resonance, which helped to clarify the relationship between resonant behavior and nanoparticle characteristics  \cite{ammari2016mathematicals}.
    In \cite{ando2016analysis,ando2016plasmon}, the authors investigated plasmon resonance for the finite frequency Helmholtz equation and the smooth domain conductivity equation. Meanwhile, considerable advancements have been made in the mathematical theory and framework of plasmon resonance in practical applications. Using layer potential and asymptotic analysis approaches, the authors of \cite{ammari2018heat} address the heat produced by nanoparticles under plasmon resonance irradiation and offer a solution to the problem of tracking the rise in tissue temperature brought on by nanoparticle heating in biological media. In a finite frequency range beyond the quasi-static approximation, the authors of \cite{li2018anomalous,li2019analysis} investigated time-harmonic electromagnetic scattering from plasmonic inclusions, as well as anomalous local resonances and cloaking. In addition,  the development of mathematical analyses of the plasmonic resonances of metallic nanoparticles with a variety of geometrical structures, including bowtie and slit types, confocal ellipsoids, infinitely thick metal plates with gap defects, anisotropic straight and curved nanorods, and photonic crystals, has been made possible by the recent rapid advancements in the chemical synthesis of metallic nanostructures \cite{bonnetier2018plasmonic,chung2014cloaking,deng2022plasmon,deng2021mathematical,lin2019integral,lin2019mathematical,zheng2019mathematical}.

    As research progressed, the authors in
    \cite{deng2024elastostatics,fang2023plasmon}
    considered the plasmon resonance in multi-layer structure and established a mathematical framework in electrostatic fields and elastic systems, while their multi-layer structure is formed by a special geometry of concentric balls. We conclude that strategically designed, more complex metal nanoparticles can produce stronger and broader plasmonic resonances, while also allowing precise tuning of the optical effects by adjusting the shape. Therefore, in this study, we investigate a general two-dimensional bilayer metal nanoshell structure, based on the evolving chemical synthesis techniques for nanostructures.
    We apply distinct small normal perturbations to the concentric disk core and shell interfaces, respectively, to make the structure of the plasmon more general, which will be crucial for our future probing of the plasmon resonance in metal nanoshells with arbitrary geometries.

    We highlight a noteworthy finding by Hirsch in his research on spherical metal nanoshell particles: the plasmon resonance wavelength of the particles redshifts as the nanoshell thickness decreases \cite{hirsch2006metal}. We have concentrated numerically on this perturbation of metal nanoparticles in the antibonding and bonding plasmon modes as influenced by the core and shell scale factors, as well as the two limiting cases of solid and cavity plasmon modes, because we are highly interested in the tunable property of metal nanoshells. We link the above plasmon modes using hybridization theory and provide explicit mathematical calculations. First, we use the transmission condition to construct the NP-type operator implicit in the problem, each of whose eigenvalues corresponds to a resonant mode. Next, we use the NP operator's scale transformation property to extract the scale factor. We obtain the perturbed eigenvalues using standard perturbation theory and asymptotic analysis, allowing the plasmonic resonant frequencies to be analyzed. A general design principle that can be used both qualitatively and quantitatively to direct the design of metal nanoshell structures and forecast their resonance properties is provided to nano researchers by the mathematical analysis of plasmonic resonant frequencies influenced by the core and shell scales established in this paper.

    The paper is organized in the following way. The key results about the plasmonic resonant frequencies and the mathematical settings are presented in Section \ref{sec:math-setting}. Basic results about NP-type operators and layer potentials are given in section \ref{sec:pre}. The purpose of Section \ref{sec:pertur} is to demonstrate the primary finding covered in Section \ref{sec:math-setting}. We provide numerical examples in Section \ref{sec:num} to support our theoretical conclusions.
\section{Mathematical setting and main results}\label{sec:math-setting}
    The rapid development in the field of nanomaterials has brought great advances in science, engineering and technology. It is particularly important to use mathematical tools to establish the expression of plasmon resonance in metallic nanoshell particles, which is closely related to the practical application of plasmon resonance, and to provide an understanding of how it can be utilized as a nano-optical element to control light at the nanoscale.

    \begin{figure}[htpb]
    	\centering    	\includegraphics[width=0.8\linewidth]{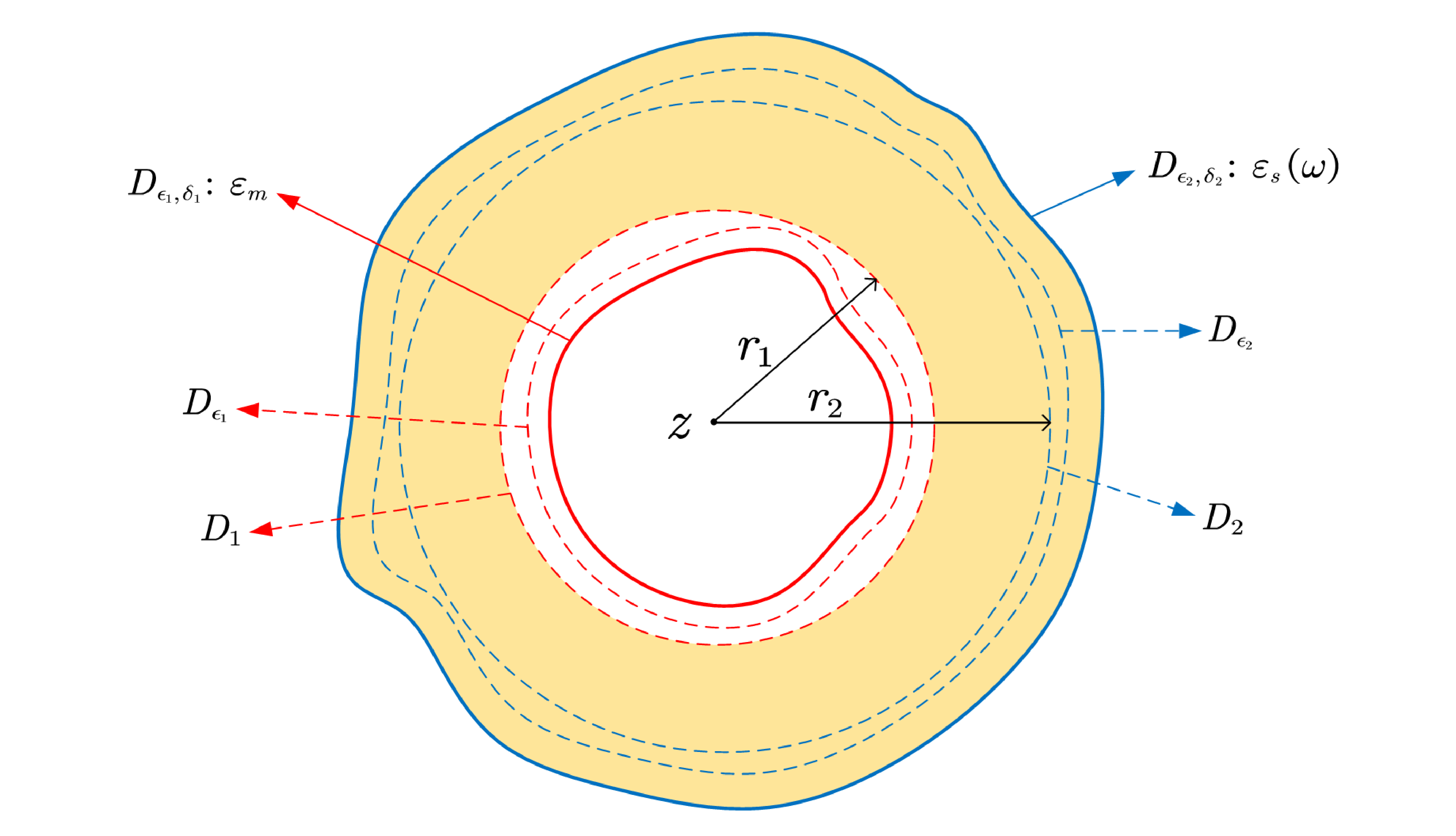}
    	\caption{Schematic illustration of metallic nanostructures with perturbed core-shell boundaries.} 
    	\label{picture-pertubation}
    \end{figure}

    In order to accurately describe the plasmon resonance of concentric disk interface perturbations, for $i=1,2$, the bounded domain $D_i \subset \mathbb{R} ^2 $ denotes the disk with radius $r_i$, interface $\partial D_i$. The $\epsilon _i$-normal perturbation of $D_i$ yields the domain $D_{\epsilon _i}$, whose interface $\partial D_{\epsilon _i}$ can be defined as
    \begin{equation*}
    	\partial D_{\epsilon _i}=\left\{ \tilde{x}~|~\tilde{x}= x +\epsilon _ih_i\left( x \right) \nu _i\left( x \right), x \in \partial D_i  \right\} ,
    \end{equation*}
    where the function $h_i\in C^{2,\beta}\left( \partial D_i \right) $. Next, the two perturbed bounded domains $D_{\epsilon _i}$ are scaled with different proportions $\delta _i$ to obtain the domains $D_{\epsilon _i, \delta _i}$ with interface $\partial D_{\epsilon _i, \delta _i} = \delta _i \cdot \partial D_{\epsilon _i} $. The bounded domain $D_{\epsilon _1, \delta _1}$ denotes the dielectric core with permittivity $\varepsilon _m = 1$, and the bounded domain $D_{\epsilon _2, \delta _2}\setminus \overline{D_{\epsilon _1,\delta _1}}$ denotes a noble metal shell with dielectric function $\varepsilon _s \left( \omega \right)$ expressed by the Drude's model
    \begin{equation*}
    	\label{drude3}
        \varepsilon\left( \omega \right) = 1 - \frac{\omega_p^2}{\omega^2 + i\gamma\omega},
    \end{equation*}
    where $\omega_p$ represents the bulk plasmon frequency, $\gamma$ is the metallic loss factor for absorption and this work considers lossless noble metal material. The region $\mathbb{R} ^2\setminus \overline{D_{\epsilon _2,\delta _2}}$ is filled with a solid dielectric with the same permittivity $\varepsilon _m = 1$. Assuming that $\left| D_{i} \right|=O\left( 1 \right)$, then the scaling factors $\delta _1$, $\delta _2$ carve the scaling variations of the metal nanoshell core and shell respectively.  The metal nanoshell geometry we are interested in is illustrated in Figure \ref{picture-pertubation}.
     The transmission conditions along the $\partial D_{\epsilon _1,\delta _1}$ and $\partial D_{\epsilon _2,\delta _2}$ lead to the total field $u$ satisfying the following electrostatic field model.

    \begin{equation}
    	\label{perturbations_math_model}
    \left\{
    	\begin{aligned}
    		&\nabla \cdot \left( \varepsilon \nabla u\left( x \right) \right) =0, \quad &&x\in \mathbb{R} ^2, \vspace{1ex} \\
    		&\left. u\left( x \right) \right|_+=\left. u\left( x \right) \right|_- ,\quad &&x\in \partial D_{\epsilon _1, \delta _1}\cup \partial D_{\epsilon _2,\delta _2}, \vspace{1ex} \\
    		&\varepsilon _s\left( \omega \right) \left. \dfrac{\partial u}{\partial \nu}\left( x \right) \right|_+=\varepsilon _m\left. \dfrac{\partial u}{\partial \nu}\left( x \right) \right|_-, \quad  &&x\in \partial D_{\epsilon _1,\delta _1}, \vspace{1ex} \\
    		&\varepsilon _m\left. \dfrac{\partial u}{\partial \nu}\left( x \right) \right|_+=\varepsilon _s\left( \omega \right) \left. \dfrac{\partial u}{\partial \nu}\left( x \right) \right|_-, \quad  &&x\in \partial D_{\epsilon _2,\delta _2}, \vspace{1ex} \\
    		&u\left( x \right) -H\left( x \right) =O\left( |x|^{-1} \right),  \quad &&|x|\rightarrow \infty,\\
    	\end{aligned}
    \right.
    \end{equation}
    where
    \begin{equation*}
    	\varepsilon =\varepsilon _m\chi \left( D_{\epsilon _1,\delta _1}\cup \mathbb{R} ^2\setminus \overline{D_{\epsilon _2,\delta _2}} \right) +\varepsilon _s\left( \omega \right) \chi \left( D_{\epsilon _2,\delta _2}\setminus \overline{D_{\epsilon _1,\delta _1}} \right),
    \end{equation*}
    and the harmonic function $H\left( x \right)$ represents an incident field. The solution $u \left( x \right)$ to (\ref{perturbations_math_model}) may be represented as \cite{ammari2013spectral}
    \begin{equation*}
    	u\left( x \right) =H\left( x \right) +\mathcal{S} _{D_{\epsilon _1,\delta _1}}\left[ \bar{\varphi} \right] \left( x \right) +\mathcal{S} _{D_{\epsilon _2,\delta _2}}\left[ \bar{\phi} \right] \left( x \right),
    \end{equation*}
    for some functions  $\bar{\varphi}\left( x \right) \in L_{0}^{2}\left( \partial D_{\epsilon _1,\delta _1} \right)$ and $\bar{\phi}\left( x \right) \in L_{0}^{2}\left( \partial D_{\epsilon _2,\delta _2} \right) $, $L_{0}^{2}$ denotes the set of all square integrable functions with the integral 0.  The main results of this paper are given in the following theorems and remarks. The proofs will be given in section \ref{sec:pertur}.

    \begin{thm}
    \label{theorem2.1}
    Let the core-shell interfaces $\partial D_i$ of the concentric disk-shaped metallic nanoshell exhibit the following $\epsilon _i$-normal perturbation
     \begin{equation*}
    	\partial D_{\epsilon _i}=\left\{ \tilde{x}~|~\tilde{x}= x +\epsilon _i h_i\left( x \right) \nu _i\left( x \right), x \in \partial D_i \right\} ,
    \end{equation*}
     then, the first-order corrected resonance frequency $\tilde{\omega} _{n\pm}\left( \delta _1 , \delta _2 \right)$ with respect to the scale factors $\delta _1$, $\delta _2$ is given by the following expression
    \begin{equation}
        \label{2d_perturbation_resonance_frequency}
    	\tilde{\omega} _{n\pm}\left( \delta _1 , \delta _2 \right) =\frac{\omega _{p}}{\sqrt{2}}\sqrt{1-2 \lambda _n}
    \left[1-\left( 1-2 \lambda _n \right) ^{-1}\left( \epsilon _1\left< \mathbb{K} _{\delta _1 , \delta _2 , 1}^{*} \varPhi _n,\varPhi _n \right> _{\mathcal{H} ^2} + \epsilon _2\left< \mathbb{K} _{\delta _1 , \delta _2 , 2}^{*} \varPhi _n,\varPhi _n \right> _{\mathcal{H} ^2} \right)\right] , \  n=1,2,\cdots,
    \end{equation}
    where the inner product $\left<  \cdot, \cdot \right> _{\mathcal{H} ^2}$ is presented in \textnormal{(\ref{H^2_product})}, $\left( \lambda _n, \varPhi _n \right)$ is the eigenpair of the operator $\mathbb{K} _{\delta _1 , \delta _2 }^{*}$ in \textnormal{(\ref{three-matrix-1})}, and $\mathbb{K} _{\delta _1 , \delta _2 , 1}^{*}$ and $\mathbb{K} _{\delta _1 , \delta _2 , 2}^{*}$ are presented in \textnormal{(\ref{three-matrix-2})} .
    \end{thm}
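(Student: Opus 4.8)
The plan is to reduce the transmission problem \eqref{perturbations_math_model} to a boundary integral equation whose plasmon resonances are governed by an NP-type operator, and then to track the effect of the two normal perturbations $\epsilon_i h_i$ by first-order eigenvalue perturbation theory. Starting from the layer-potential ansatz $u = H + \mathcal{S}_{D_{\epsilon_1,\delta_1}}[\bar\varphi] + \mathcal{S}_{D_{\epsilon_2,\delta_2}}[\bar\phi]$, I would impose the two flux transmission conditions on $\partial D_{\epsilon_1,\delta_1}$ and $\partial D_{\epsilon_2,\delta_2}$. Applying the jump relations for the normal derivative of the single-layer potential then produces a $2\times 2$ block system for $(\bar\varphi,\bar\phi)$ in which the metal contrast $\lambda(\omega):=(\varepsilon_s(\omega)+\varepsilon_m)/\bigl(2(\varepsilon_s(\omega)-\varepsilon_m)\bigr)$ enters as a spectral parameter set against a block NP-type operator; a resonance occurs exactly when $\lambda(\omega)$ meets an eigenvalue of that operator.

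Second, I would separate out the scale dependence. Since $D_{\epsilon_i,\delta_i}=\delta_i\cdot D_{\epsilon_i}$, the dilation behaviour of the 2D logarithmic kernel (under $x\mapsto\delta x$ the kernel changes by the additive constant $-\tfrac{1}{2\pi}\log\delta$, which is annihilated upon taking the normal derivative) renders the diagonal NP blocks scale-invariant while the off-diagonal interface-to-interface blocks retain a genuine dependence on the ratio $\delta_1 r_1/(\delta_2 r_2)$. This isolates the operator $\mathbb{K}^{*}_{\delta_1,\delta_2}$ of \eqref{three-matrix-1}. For the exact concentric-disk geometry ($\epsilon_1=\epsilon_2=0$) I would diagonalize it by Fourier expansion in the angular variable, obtaining eigenpairs $(\lambda_n,\varPhi_n)$ where, for each mode $n$, the $2\times 2$ angular block yields a pair of hybridized (bonding/antibonding) eigenvalues. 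Matching $\lambda(\omega)=\lambda_n$ against the lossless Drude relation $\varepsilon_s(\omega)=1-\omega_p^2/\omega^2$ and solving for $\omega$ returns the unperturbed resonance frequencies $\omega_{n\pm}=\tfrac{\omega_p}{\sqrt{2}}\sqrt{1-2\lambda_n}$, which is precisely the prefactor in \eqref{2d_perturbation_resonance_frequency}, the $\pm$ labeling the two hybridized branches.

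Third, I would expand the perturbed boundary integral operators in $\epsilon_i$. Parametrizing $\partial D_{\epsilon_i,\delta_i}$ by $\tilde x = x+\epsilon_i h_i(x)\nu_i(x)$ and expanding the fundamental solution, the arc-length element, and the perturbed unit normal to first order, then collecting the $O(\epsilon_i)$ terms, yields
\begin{equation*}
\mathbb{K}^{*}_{\delta_1,\delta_2}(\epsilon_1,\epsilon_2)=\mathbb{K}^{*}_{\delta_1,\delta_2}+\epsilon_1\mathbb{K}^{*}_{\delta_1,\delta_2,1}+\epsilon_2\mathbb{K}^{*}_{\delta_1,\delta_2,2}+O(\epsilon_1^2+\epsilon_2^2),
\end{equation*}
which identifies the operators $\mathbb{K}^{*}_{\delta_1,\delta_2,i}$ of \eqref{three-matrix-2}. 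By the Plemelj (Calder\'on) symmetrization the unperturbed block operator is self-adjoint in the inner product $\langle\cdot,\cdot\rangle_{\mathcal{H}^2}$ of \eqref{H^2_product}, so standard first-order perturbation theory gives the corrected eigenvalue
\begin{equation*}
\tilde\lambda_n=\lambda_n+\epsilon_1\langle\mathbb{K}^{*}_{\delta_1,\delta_2,1}\varPhi_n,\varPhi_n\rangle_{\mathcal{H}^2}+\epsilon_2\langle\mathbb{K}^{*}_{\delta_1,\delta_2,2}\varPhi_n,\varPhi_n\rangle_{\mathcal{H}^2}+O(\epsilon^2).
\end{equation*}

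Finally, substituting $\tilde\lambda_n$ into $\omega=\tfrac{\omega_p}{\sqrt{2}}\sqrt{1-2\lambda}$ and linearizing via $\sqrt{1-2\tilde\lambda_n}=\sqrt{1-2\lambda_n}\,\bigl(1-(1-2\lambda_n)^{-1}(\tilde\lambda_n-\lambda_n)\bigr)+O(\epsilon^2)$ reproduces \eqref{2d_perturbation_resonance_frequency}. The hard part will be the third step: computing the shape derivatives of the coupled single-layer and NP operators on both interfaces simultaneously, because the 2D logarithmic kernel demands careful bookkeeping of the off-diagonal blocks and of the perturbed arc-length and normal, and because one must verify that the resulting $\mathbb{K}^{*}_{\delta_1,\delta_2,i}$ stay symmetric with respect to $\langle\cdot,\cdot\rangle_{\mathcal{H}^2}$ so that the eigenvalue correction is real and the perturbation formula is legitimate.
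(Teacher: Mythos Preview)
Your proposal is correct and follows essentially the same route as the paper: reduce the transmission problem to a boundary integral system with spectral parameter $z(\omega)$, rescale to isolate $\mathbb{K}^{*}_{\delta_1,\delta_2}$, expand the perturbed operators to first order in $\epsilon_1,\epsilon_2$ to obtain $\mathbb{K}^{*}_{\delta_1,\delta_2,i}$, apply first-order eigenvalue perturbation in the $\mathcal{H}^2$ inner product, and then linearize the Drude relation $\omega=\tfrac{\omega_p}{\sqrt{2}}\sqrt{1-2\lambda}$. One small remark: your closing worry about verifying that the $\mathbb{K}^{*}_{\delta_1,\delta_2,i}$ themselves are symmetric in $\langle\cdot,\cdot\rangle_{\mathcal{H}^2}$ is unnecessary for the formula---the first-order correction $\langle\mathbb{K}^{*}_{\delta_1,\delta_2,i}\varPhi_n,\varPhi_n\rangle_{\mathcal{H}^2}$ requires only self-adjointness of the \emph{unperturbed} operator $\mathbb{K}^{*}_{\delta_1,\delta_2}$, which follows from the Calder\'on identity.
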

    \begin{rem}
    Two limit states are important in the above results. The scale factor $\delta_1 \rightarrow 0$, the structure is called the solid plasmon, while the scale factor $\delta_2 \rightarrow \infty$, the structure is called the cavity plasmon. It is easy to calculate the resonance frequency $\tilde{\omega} _{s,n}\left( \delta _2 \right)$ and $\tilde{\omega} _{c,n}\left( \delta _1 \right)$ of the perturbed metallic nanoshell in the solid plasmon and cavity plasmon are as follows
    \begin{align*}
    	\tilde{\omega} _{s,n}\left( \delta _2 \right) & =\frac{\omega _{p}}{\sqrt{2}}\left[1-\epsilon _1\left< \mathbb{K} _{0,\delta_2 , 1}^{*} \varPhi _n,\varPhi _n \right> _{\mathcal{H} ^2} - \epsilon _2\left< \mathbb{K} _{0,\delta_2 , 2}^{*} \varPhi _n,\varPhi _n \right> _{\mathcal{H} ^2}\right] ,\\
    	\tilde{\omega} _{c,n}\left( \delta _1 \right) & =\frac{\omega _{p}}{\sqrt{2}}\left[1-\epsilon _1\left< \mathbb{K} _{\delta_1,\infty , 1}^{*} \varPhi _n,\varPhi _n \right> _{\mathcal{H} ^2} - \epsilon _2\left< \mathbb{K} _{\delta_1,\infty , 2}^{*} \varPhi _n,\varPhi _n \right> _{\mathcal{H} ^2}\right] ,
    \end{align*}
    these two frequencies satisfy the following asymptotic sum rules \cite{apell1996sum}
    \begin{equation*}
        \tilde{\omega} _{s,n}^2\left( \delta _2 \right) + \tilde{\omega} _{c,n}^2\left( \delta _1 \right) = \omega _{p}^2+\mathcal {O}\left( \epsilon _1 \right)+\mathcal {O}\left( \epsilon _2 \right), \quad  n=1,2,\cdots.
    \end{equation*}

    In particular, as $\epsilon _1 \rightarrow 0 $ and $\epsilon _2 \rightarrow 0 $, the geometric structure degenerates into a concentric disk, and a straightforward calculation yields
    \begin{equation*}
    	\omega _{s}= \frac{\omega _{p}}{\sqrt{2}}, \quad \omega _{c} = \frac{\omega _{p}}{\sqrt{2}},
    \end{equation*}
    where $\omega _{s}$ and $\omega _{c}$ are the resonant frequencies of solid sphere plasmon and cavity plasmon, respectively, satisfying the sum rules
    \begin{equation}
        \label{omega-p^2}
    	\omega _{s}^2 + \omega _{c}^2 = \omega _{p}^2.
    \end{equation}
    \end{rem}
    \begin{rem}
    In (\ref{2d_perturbation_resonance_frequency}), by adjusting the shape function $h_i$, we can determine the resonant frequencies for a broader range of geometries, extending beyond simple configurations. Furthermore, by simultaneously tuning $\delta_i$ and $\epsilon _i$, it is possible to achieve precise control over the resonance frequencies, enabling the design of structures with tailored optical properties. This flexibility allows for a wider bandwidth of resonance frequencies, which is particularly beneficial in applications such as sensing technologies and molecular detection.
    \end{rem}
    \begin{rem}
    By discretizing the layer potential operators $\mathbb{K} _{\delta _1 , \delta _2 , 1}^{*}$ and $\mathbb{K} _{\delta _1 , \delta _2 , 2}^{*}$ as introduced in (\ref{2d_perturbation_resonance_frequency}), we can obtain numerical results for resonance frequencies of more general shapes \cite{colton2019inverse}. This approach allows for a more comprehensive analysis of the plasmon resonance in arbitrarily shaped nanostructures, which is crucial for understanding their optical properties and potential applications in fields such as nanophotonics and metamaterials.
    \end{rem}

    \begin{figure}[htpb]
    	\centering    	\includegraphics[width=0.8\linewidth]{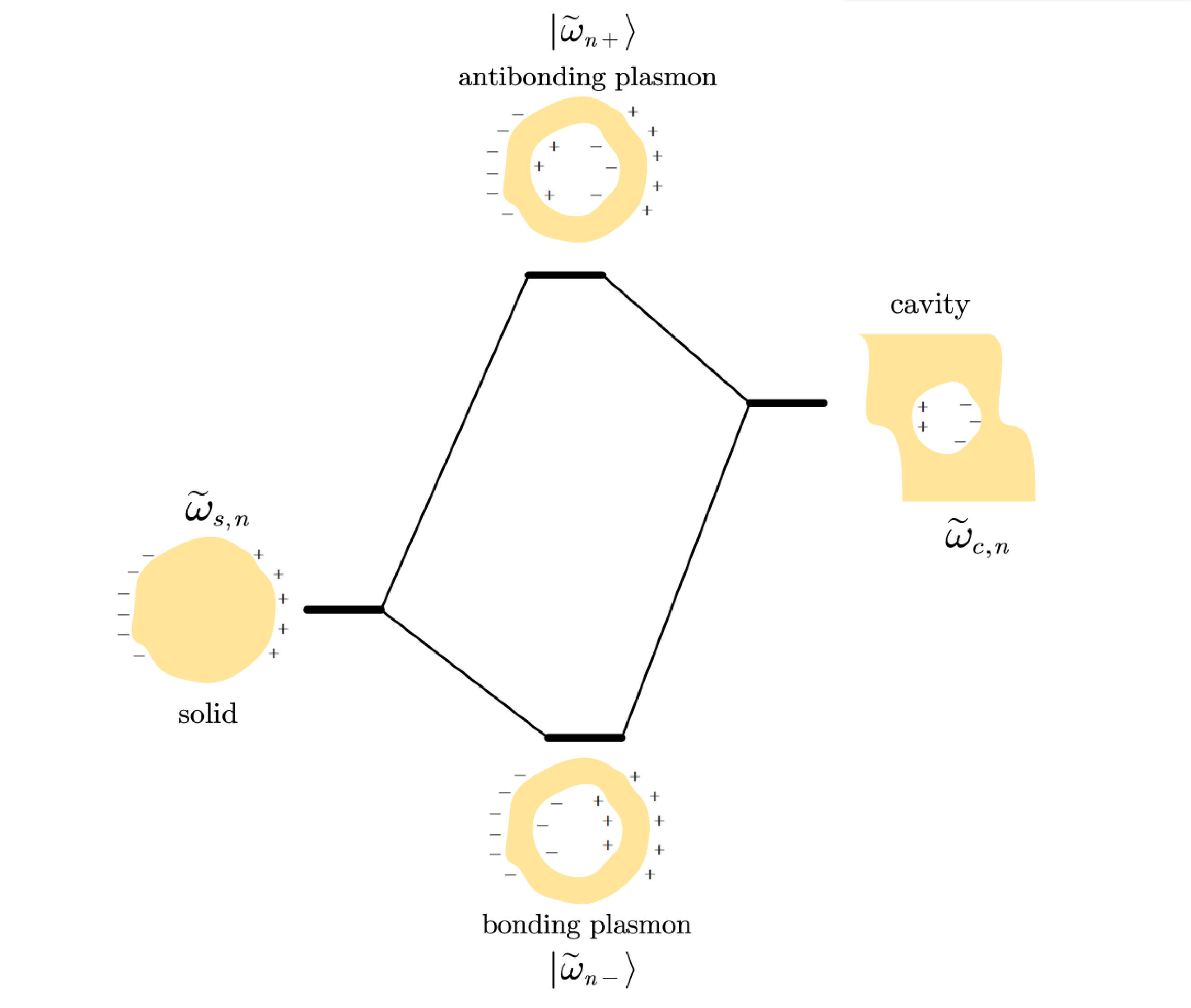}
    	\caption{Hybridization energy-level diagram induced by interaction between solid and cavity plasmons} 
    	\label{picture-hybridization}
    \end{figure}

\begin{rem}
    The mathematical results above provide a rigorous and insightful framework for interpreting the hybridization phenomenon in complex metallic nanostructures. This not only offers a theoretical characterization of the hybridization process but also enables quantitative predictions for the practical applications mentioned earlier. Specifically, the solid and cavity plasmon can be considered as free excitations of the system, which start interacting with each other when they are compounded into metal nanoshells, which leads to splitting, transferring and remixing of the plasmonic energy. This phenomenon we call the hybridization of solid and cavity plasmon modes. According to electronic structure theory, the hybridization mode is comparable to the process by which atomic orbitals generate molecular orbitals. In addition, the scales of the metal core and shell also affect the intensity of the solid and cavity plasmon interaction, which split the plasmon resonance into two new resonances: the high-energy antibonding plasmon $|\tilde{\omega}_{n+}\rangle$ and the low-energy bonding plasmon $|\tilde{\omega}_{n-}\rangle $. The $|\tilde{\omega}_{n+}\rangle$ mode corresponds to the antisymmetric coupling between the solid-cavity modes, and the $|\tilde{\omega}_{n-}\rangle $ mode corresponds to the symmetric coupling between the two modes. The energy-level diagram of the plasmon hybridization in the metal nanoshells induced by the interaction of the two plasmons, the solid and the cavity, is shown in Figure \ref{picture-hybridization}. Simplified again, one way to conceptualize the metal nanocore-shell structure's plasmon resonance is as a collection of plasmons that emerge from the much reduced geometry and interact to create a system \cite{prodan2004plasmon,prodan2003hybridization}.

    Identity (\ref{omega-p^2}) clearly clarifies that the solid sphere and cavity plasmon modes are generated by the free excitation of the system, and the hybridization between these two modes significantly influences the resonance frequency of the metallic nanoshell plasmon.
\end{rem}

\section{Auxiliary results}\label{sec:pre}
    In this section, we will introduce layer potentials, Neumann-Poincar\'{e} operator and its scale invariants related to electromagnetic scattering. Furthermore, we will introduce the NP-type operator and state its relevant properties. A systematic review and synthesis of this preparatory knowledge will help the reader to have a comprehensive grasp of this study.
\subsection{Layer potentials}
    We denote by $\varGamma \left( x,y \right)$ the fundamental solution for the Laplacian in the free space. In $\mathbb{R} ^2$, we have
    \begin{equation*}		
	    \varGamma \left( x,y \right) =\dfrac{1}{2\pi}\ln |x-y|.
    \end{equation*}

    Consider a bounded $\mathcal{C}^{2}$-domain $D$ in $\mathbb{R} ^2$. Suppose that $D$ contains the origin 0. The single-layer potential operator  $\mathcal{S} _D$ and double-layer potential operator  $\mathcal{D} _D$ associated with $D$ are given by
    \begin{align*}
    	\mathcal{S} _D\left[ \varphi \right] \left( x \right) & =\int_{\partial D}{\varGamma \left( x,y \right) \varphi \left( y \right) \d\sigma \left( y \right)},\quad  x\in \partial D, \\
    	\mathcal{D} _D\left[ \varphi \right] \left( x \right) & = \int_{\partial D}{\frac{\partial \varGamma \left( x,y \right)}{\partial \nu \left( y \right)}\varphi \left( y \right) \d \sigma \left( y \right)}, \quad x\in \partial D,
    \end{align*}
    where $\varphi \in L^2 \left( \partial D \right)$ is the density function, and $\nu \left( x \right)$ denotes the outward unit normal at $x \in \partial D$. when $x\in \mathbb{R} ^2\setminus \partial D$, $\mathcal{S} _D\left[ \varphi \right] \left( x \right)$ and $\mathcal{D} _D\left[ \varphi \right] \left( x \right)$ are referred to as the single-layer potential and the double-layer potential, respectively.

    The unit normal derivative operator $\mathcal{K} _{D}^{*}$ of the single-layer potential operator, defined by
    \begin{equation*}
    	\mathcal{K} _{D}^{*}\left[ \varphi \right] \left( x \right) =\int_{\partial D}{\frac{\partial \varGamma \left( x,y \right)}{\partial \nu \left( x \right)}\varphi \left( y \right) \d\sigma \left( y \right)}, \quad x\in \partial D,
    \end{equation*}
    and the $L^2$-adjoint operator $\mathcal{K} _{D}$ of $\mathcal{K} _{D}^{*}$ is denoted by
    \begin{equation*}
    	\mathcal{K} _D\left[ \varphi \right] \left( x \right) =\int_{\partial D}{\frac{\partial \varGamma \left( x,y \right)}{\partial \nu \left( y \right)}\varphi \left( y \right) \d\sigma \left( y \right)}, \quad x\in \partial D.
    \end{equation*}
    The boundary integral operators $\mathcal{K} _{D}^{*}$ and $\mathcal{K} _D$ are also known as Neumann-Poincar\'{e} operators, i.e., the NP operators.

    In the special case of the disk, the expressions of the operators $\mathcal{K} _{D}^{*}$ and $\mathcal{K} _D$ can be simplified \cite{ammari2013mathematical}.
    \begin{lem}
        Suppose that $D \subset \mathbb{R} ^2 $ is a disk with radius $r_0$, then,
        \begin{equation*}
        	\frac{\left< x-y,\nu _x \right>}{\left| x-y \right|^2}=\frac{1}{2r_0},\quad \forall x,y\in \partial D,x\ne y.
        \end{equation*}
        And therefore, for any $\varphi \in L^2\left( \partial D \right)$,
        \begin{equation*}
        	\mathcal{K} _{D}^{*}\left[ \varphi \right] \left( x \right) =\mathcal{K} _D\left[ \varphi \right] \left( x \right) =\frac{1}{4\pi r_0}\int_{\partial D}{\varphi \left( y \right) \d\sigma \left( y \right)},
        \end{equation*}
        for all $x\in \partial D$.
    \end{lem}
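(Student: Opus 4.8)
The plan is to establish the purely geometric identity first and then feed it into the integral kernels of the two NP operators. For the geometric identity I would parametrize the boundary by writing each point of $\partial D$ as $x = c + r_0 e_x$, where $c$ is the center of the disk and $e_x$ is a unit vector; then the outward normal at $x$ is exactly $\nu_x = e_x$. Writing $y = c + r_0 e_y$ in the same way, the difference $x - y = r_0(e_x - e_y)$ is independent of $c$, so the location of the center plays no role. Letting $\theta$ denote the angle between $e_x$ and $e_y$, a direct computation gives $\langle x-y, \nu_x \rangle = r_0(1 - \cos\theta)$ and $|x-y|^2 = 2 r_0^2 (1 - \cos\theta)$. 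The key point is that the factor $1 - \cos\theta$, which vanishes precisely when $x = y$, occurs in both numerator and denominator and hence cancels for every $x \neq y$, leaving the constant value $1/(2 r_0)$. This is the only step demanding any care: one must restrict to $x \neq y$ so as to avoid the $0/0$ indeterminacy, after which the ratio is genuinely constant on $\partial D \times \partial D$ off the diagonal.

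With the identity in hand, the operator formulas follow immediately. Differentiating the fundamental solution gives $\nabla_x \Gamma(x,y) = (2\pi)^{-1}(x-y)/|x-y|^2$, so the kernel of $\mathcal{K}_D^*$ is $\partial_{\nu_x}\Gamma(x,y) = (2\pi)^{-1}\langle x-y, \nu_x\rangle/|x-y|^2 = 1/(4\pi r_0)$, a constant in both variables. Pulling this constant outside the integral yields the stated expression for $\mathcal{K}_D^*[\varphi]$. For $\mathcal{K}_D$ I would repeat the argument with the normal derivative taken in the $y$ variable: since $\nabla_y \Gamma(x,y) = (2\pi)^{-1}(y-x)/|x-y|^2$, one has $\partial_{\nu_y}\Gamma(x,y) = (2\pi)^{-1}\langle y-x, \nu_y\rangle/|x-y|^2$, and the same geometric identity with the roles of $x$ and $y$ interchanged (legitimate by the rotational symmetry of the circle) again gives the constant $1/(4\pi r_0)$. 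Hence $\mathcal{K}_D[\varphi](x) = \mathcal{K}_D^*[\varphi](x)$ on the disk.

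There is no serious obstacle here; the entire content is the cancellation in the geometric identity, and everything else is substitution and pulling a constant through an integral. The one point worth flagging is that the resulting kernel is smooth, indeed constant, rather than merely weakly singular as for a general $\mathcal{C}^2$ domain. This is special to the disk and reflects the fact that $\mathcal{K}_D^*$ maps every density to a multiple of the constant function, so its range is one-dimensional: the constant is an eigenfunction with eigenvalue $1/2$, while every mean-zero density lies in the kernel. It is exactly this degeneracy that later renders the spectral analysis of the layered concentric-disk configuration tractable.
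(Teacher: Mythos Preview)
Your proof is correct. The paper itself does not supply a proof of this lemma; it simply states the result and attributes it to the reference \cite{ammari2013mathematical}. Your argument---parametrizing $x=c+r_0e_x$, $y=c+r_0e_y$, computing $\langle x-y,\nu_x\rangle=r_0(1-\cos\theta)$ and $|x-y|^2=2r_0^2(1-\cos\theta)$, and cancelling the common factor---is exactly the standard elementary derivation, and the subsequent substitution into the kernels of $\mathcal{K}_D^*$ and $\mathcal{K}_D$ is immediate. The closing observation about the one-dimensional range and the spectrum $\{0,\tfrac12\}$ is also correct and pertinent to how the lemma is used later (cf.\ Lemma~\ref{lemma4.5}).
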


    For a function $u$ defined on $\mathbb{R} ^2\setminus \partial D$, we define
    \begin{equation*}
    	\left. u \right|_{\pm}\left( x \right) :=\lim_{t\rightarrow 0^+} u\left( x\pm t\nu \left( x \right) \right) , \quad x\in \partial D,
    \end{equation*}
    and
    \begin{equation*}
    	\left. \frac{\partial u}{\partial \nu} \right|_{\pm}\left( x \right) :=\lim_{t\rightarrow 0^+} \left< \nabla u\left( x\pm t\nu \left( x \right) \right) ,\nu \left( x \right) \right> ,\quad  x\in \partial D,
    	\vspace{1ex}
    \end{equation*}
    if the limits exist. Then we have \cite{colton2019inverse}
    \begin{equation}
    	\label{jump_relation1}
    	\left. \mathcal{S} _D\left[ \varphi \right] \right|_+\left( x \right) =\left. \mathcal{S} _D\left[ \varphi \right] \right|_-\left( x \right) ,\quad  x\in \partial D,
    \end{equation}
    \begin{equation}
    	\label{jump_relation2}
    	\left. \frac{\partial \mathcal{S} _D\left[ \varphi \right]}{\partial \nu} \right|_{\pm}\left( x \right) =\left( \pm \frac{1}{2}I+\mathcal{K} _{D}^{*} \right) \left[ \varphi \right] \left( x \right) ,\quad  x\in \partial D.
    	\vspace{1ex}
    \end{equation}
    From (\ref{jump_relation1}) and (\ref{jump_relation2}), the single-layer potential operator is continuous at the interface while its normal derivative operator is discontinuous. The equation (\ref{jump_relation2}) is known as the jump relation, which relates the normal derivatives of the single-layer potential operator to the NP operator $\mathcal{K} _{D}^{*}$.

    In order to study the effect of core and shell scales on plasmon resonance for metal nanoshells of general geometries, we introduce a scaling factor $\delta $ to describe them. Given an arbitrary geometrically structured nanoparticle $B$, its scale is denoted as $\left| B \right|=O\left( 1 \right)$, and the particle obtained after scaling by a factor $\delta $ in the same proportion is denoted as $D$. We use the scale factor $\delta $ to characterize the scale of $D$, and thus the original problem is transformed into a study of the scale factor $\delta $. The following lemma is important in the investigation of scale factors.
    \begin{lem}
    Assume bounded domains $D, B \subset \mathbb{R} ^2 $ satisfying $D=p+\delta B$, with $p$ indicating the position of the centre of the domain $D$. For any $x\in \partial D$, $\tilde{x}:=\frac{x-p}{\delta}\in \partial B$, and for each density function $f \in L_{0}^{2}\left( \partial D \right) $, we introduce an associated density function $\eta \left( f \right) \in L_{0}^{2}\left( \partial B \right) $ defined on $\partial B$, satisfying
    \begin{equation*}
    	    \eta \left( f \right) \left( \tilde{x} \right) =f\left( p+\delta \tilde{x} \right) ,
    \end{equation*}
    then
    \begin{align}
    	    \label{scale-invariant1}
    	    \mathcal{S} _D\left[ f \right] \left( x \right) & =\delta \mathcal{S} _D\left[ \eta \left( f \right) \right] \left( \tilde{x} \right),\\
    	    \label{scale-invariant2}
    	    \mathcal{K} _{D}^{*}\left[ f \right] \left( x \right) & =\mathcal{K} _{D}^{*}\left[ \eta \left( f \right) \right] \left( \tilde{x} \right) ,
    \end{align}
    where \textnormal{(\ref{scale-invariant2})} is called the scale invariance of the NP operator.
    \end{lem}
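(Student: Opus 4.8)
The plan is to establish both identities by a single change of variables and to track carefully how each ingredient of the boundary integrals rescales under the dilation $y\mapsto p+\delta(y-p)$. Writing $y=p+\delta\tilde y$ with $\tilde y\in\partial B$, three elementary scaling facts drive everything: distances scale homogeneously, $|x-y|=\delta|\tilde x-\tilde y|$; the arc-length element picks up exactly one power of $\delta$, so $\d\sigma(y)=\delta\,\d\sigma(\tilde y)$; and the outward unit normal is invariant, $\nu(x)=\nu(\tilde x)$, since a dilation preserves directions. Together with the defining relation $f(y)=\eta(f)(\tilde y)$, these are the only inputs needed.

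First I would treat the single-layer potential. Substituting into $\mathcal S_D[f](x)=\int_{\partial D}\varGamma(x,y)f(y)\,\d\sigma(y)$ and using $|x-y|=\delta|\tilde x-\tilde y|$ produces the additive splitting
\begin{equation*}
\varGamma(x,y)=\frac{1}{2\pi}\ln\bigl(\delta|\tilde x-\tilde y|\bigr)=\frac{1}{2\pi}\ln\delta+\varGamma(\tilde x,\tilde y).
\end{equation*}
The term $\frac{1}{2\pi}\ln\delta$ is constant in $\tilde y$, so after the change of variables its contribution equals $\frac{\delta}{2\pi}\ln\delta\int_{\partial B}\eta(f)(\tilde y)\,\d\sigma(\tilde y)$, which vanishes precisely because $\eta(f)\in L_{0}^{2}(\partial B)$. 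What survives is $\delta\int_{\partial B}\varGamma(\tilde x,\tilde y)\eta(f)(\tilde y)\,\d\sigma(\tilde y)=\delta\,\mathcal S_B[\eta(f)](\tilde x)$, which is (\ref{scale-invariant1}). I expect this to be the one delicate step: the two-dimensional logarithmic kernel is not scale homogeneous, and the zero-mean hypothesis $f\in L_{0}^{2}$ is exactly what is required to discard the spurious additive constant $\tfrac{1}{2\pi}\ln\delta$; without it the single-layer potential would not scale cleanly.

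Finally I would handle the NP operator, where no zero-mean cancellation is needed. Differentiating the kernel gives $\partial_{\nu(x)}\varGamma(x,y)=\frac{1}{2\pi}\langle x-y,\nu(x)\rangle/|x-y|^2$, and inserting $x-y=\delta(\tilde x-\tilde y)$, $|x-y|^2=\delta^2|\tilde x-\tilde y|^2$ together with $\nu(x)=\nu(\tilde x)$ shows the kernel scales as $\delta^{-1}\,\partial_{\nu(\tilde x)}\varGamma(\tilde x,\tilde y)$. Combining this factor $\delta^{-1}$ with the factor $\delta$ coming from $\d\sigma(y)=\delta\,\d\sigma(\tilde y)$ makes the two powers of $\delta$ cancel exactly, so that $\mathcal K_D^*[f](x)=\int_{\partial B}\partial_{\nu(\tilde x)}\varGamma(\tilde x,\tilde y)\eta(f)(\tilde y)\,\d\sigma(\tilde y)=\mathcal K_B^*[\eta(f)](\tilde x)$, which is (\ref{scale-invariant2}). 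This exact cancellation is the structural reason the NP operator is genuinely scale invariant, in contrast with the single-layer operator, which scales only up to the prefactor $\delta$.
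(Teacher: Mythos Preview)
Your proposal is correct and follows the same change-of-variables argument as the paper, which in fact only writes out the $\mathcal K_D^*$ case explicitly and merely asserts that $\mathcal S_D$ ``can be proved similarly.'' Your observation that the zero-mean hypothesis $f\in L_0^2$ is precisely what kills the extra $\frac{1}{2\pi}\ln\delta$ term in the logarithmic kernel is a genuine point that the paper's proof glosses over, so your treatment is actually more complete.
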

    \begin{proof}
    The proof takes (\ref{scale-invariant2}) as an example, and (\ref{scale-invariant1}) can be proved similarly.
    \begin{equation*}
        \mathcal{K} _{D}^{*}\left[ f \right] \left( x \right)
          =\frac{1}{2\pi}\int_{\partial D}{\frac{\left< x-y,\nu _x \right>}{\left| x-y \right|^2}f\left( y \right) \d\sigma \left( y \right)} \\
          =\frac{1}{2\pi}\int_{\partial B}{\frac{\left< \tilde{x}-\tilde{y},\nu _{\tilde{x}} \right>}{\left| \tilde{x}-\tilde{y} \right|^2}\eta \left( f \right) \left( \tilde{y} \right) \d\sigma \left( \tilde{y} \right)}
          =\mathcal{K} _{B}^{*}\left[ \eta \left( f \right) \right] \left( \tilde{x} \right).
    \end{equation*}
    \end{proof}

\subsection{Neumann-Poincar\'{e}-type operator}
    In order to precisely describe the plasmon resonance behavior of metal nanoshells, the bounded domain $D_1 \subset \mathbb{R} ^2 $ is denoted to represent the solid dielectric core with interface $ \partial D_1 $  and permittivity $\varepsilon _m $, the bounded domain $D_{2}\setminus \overline{D_{1}}$ denotes the noble metal shell, with interfaces $\partial D_{1}$ and $\partial D_{2}$, whose dielectric function $\varepsilon _s\left( \omega \right)$ can be described by the Drude's model.
    The domain $\mathbb{R} ^2\setminus \overline{D_{2}}$ denotes to represent the solid dielectric with the same permittivity $\varepsilon _m $, where $\left| D_{1} \right|=O\left( 1 \right)$, $\left| D_{2} \right|=O\left( 1 \right)$.
    The transmission conditions along the $\partial D_{1}$ and $\partial D_{2}$ lead to the following model of the electrostatic field
    \begin{equation}
    	\label{Math_Model}
        \left\{
    	\begin{aligned}
    		&\nabla \cdot \left( \varepsilon \nabla u\left( x \right) \right) =0, \quad &&x\in \mathbb{R} ^2, \vspace{1ex} \\
    		&\left. u\left( x \right) \right|_+=\left. u\left( x \right) \right|_-, \quad   &&x\in \partial D_{1}\cup \partial D_{2}, \vspace{1ex} \\
    		&\varepsilon _s\left( \omega \right) \left. \dfrac{\partial u}{\partial \nu}\left( x \right) \right|_+=\varepsilon _m\left. \dfrac{\partial u}{\partial \nu}\left( x \right) \right|_-, \quad  &&x\in \partial D_{1}, \vspace{1ex} \\
    		&\varepsilon _m\left. \dfrac{\partial u}{\partial \nu}\left( x \right) \right|_+=\varepsilon _s\left( \omega \right) \left. \dfrac{\partial u}{\partial \nu}\left( x \right) \right|_-, \quad &&x\in \partial D_{2}, \vspace{1ex} \\
    		&u\left( x \right) -H\left( x \right) =O\left( |x|^{-1} \right),  \quad &&|x|\rightarrow \infty,\\
    	\end{aligned}
        \right.
    \end{equation}
    where
    \begin{equation*}
    	\varepsilon =\varepsilon _m\chi \left( D_{1}\cup \mathbb{R} ^2\setminus \overline{D_{2}} \right) +\varepsilon _s\left( \omega \right) \chi \left( D_{2}\setminus \overline{D_{1}} \right).
    \end{equation*}
     The total field $u$ can be represented as
    \begin{equation*}
    	u\left( x \right) =H\left( x \right) +\mathcal{S} _{D_{1}}\left[ \varphi \right] \left( x \right) +\mathcal{S} _{D_{2}}\left[ \phi \right] \left( x \right) .
    \end{equation*}
    From the transmission conditions in (\ref{Math_Model}) and the jump relation (\ref{jump_relation2}), it follows that the potential pair $\left(\varphi , \phi\right)$ is the solution of the following boundary integral equations
    \begin{equation}
        \label{tbc1}
        \left\{
    	\begin{aligned}    		
    		&\left( z\left( \omega \right) I-\mathcal{K} _{D_{1}}^{*} \right) \left[ \varphi \right] \left( x \right) -\dfrac{\partial \mathcal{S} _{D_{2}}\left[ \phi \right]}{\partial \nu _{1}} \left( x \right) =\dfrac{\partial H}{\partial \nu _{1}}\left( x \right),  \quad && x\in \partial D_{1}, \vspace{2ex} \\
    		&\dfrac{\partial \mathcal{S} _{D_{1}}\left[ \varphi \right]}{\partial \nu _{2}} \left( x \right) +\left( z\left( \omega \right) I+\mathcal{K} _{D_{2}}^{*} \right) \left[ \phi \right] \left( x \right) =-\dfrac{\partial H}{\partial \nu _{2}}\left( x \right),  \quad && x\in \partial D_{2},\\
    	\end{aligned}
        \right.
    \end{equation}
    where $\left(\varphi , \phi\right)$ is defined on $L^{2}_{0}\left( \partial D_{1} \right) \times L^{2}_{0}\left( \partial D_{2} \right) $ and $\nu _{1}$, $\nu _{2}$ denote the outward unit normal vectors of $\partial D_{1}$, $\partial D_{2}$ respectively, and $z\left( \omega \right)$ is denoted by
    \begin{equation}
    	\label{z_omega}
    	z\left( \omega \right) =\frac{\varepsilon _m+\varepsilon _s\left( \omega \right)}{2\left( \varepsilon _m-\varepsilon _s\left( \omega \right) \right)}.
    \end{equation}
    The  boundary integral equations (\ref{tbc1}) can be rewritten as
    \begin{equation}
    	\label{martix1}
    	\left[ \begin{matrix}
    		z\left( \omega \right) I-\mathcal{K} _{D_{1}}^{*} &	-\dfrac{\partial \mathcal{S} _{D_{2}}}{\partial \nu _{1}} \vspace{2ex} \\
    		\dfrac{\partial \mathcal{S} _{D_{1}}}{\partial \nu _{2}} & z\left( \omega \right) I+\mathcal{K} _{D_{2}}^{*}\\
    	\end{matrix} \right] \left[ \begin{array}{c}
    		\varphi \vspace{2ex}\\
    		\phi\\
    	\end{array} \right] =\left[ \begin{array}{c}
    		\dfrac{\partial H}{\partial \nu _{1}}  \vspace{2ex} \\
    		-\dfrac{\partial H}{\partial \nu _{2}}\\
    	\end{array} \right].
    \end{equation}
    Let the Neumann-Poincar\'{e}-type operator $\mathbb{K} ^*$, i.e., NP-type operator be defined by
    \begin{equation*}
    	\mathbb{K} ^*=\left[ \begin{matrix}
    		-\mathcal{K} _{D_{1}}^{*}&		-\dfrac{\partial \mathcal{S} _{D_{2}}}{\partial \nu _{1}} \vspace{2ex} \\
    		\dfrac{\partial \mathcal{S} _{D_{1}}}{\partial \nu _{2}}&		\mathcal{K} _{D_{2}}^{*}\\
    	\end{matrix} \right] .
    \end{equation*}
    Obviously, the boundary integral equations (\ref{martix1}) have the following simpler form
    \begin{equation*}
    	\label{tbc2}
    	\left( z\left( \omega \right) \mathbb{I} +\mathbb{K} ^* \right) \left[\varPhi\right] =f,
    \end{equation*}
    where
    \begin{equation*}
    	\mathbb{I} =\left[ \begin{matrix}
    		I&	\quad	0 \vspace{2ex}\\
    		0&	\quad	I\\
    	\end{matrix} \right] ,\quad
    	\varPhi =\left[ \begin{array}{c}
    		\varphi \vspace{2ex} \\
    		\phi\\
    	\end{array} \right] ,\quad
    	f=\left[ \begin{array}{c}
    		\dfrac{\partial H}{\partial \nu _{1}}  \vspace{2ex} \\
    		-\dfrac{\partial H}{\partial \nu _{2}}\\
    	\end{array} \right].
    \end{equation*}

    The NP-type operator has the following properties \cite{ammari2013spectral,kang2015layer}:
    \begin{enumerate}
    	\item[(1)] The spectrum of the $\mathbb{K} ^*$ lies in the interval $\left[ -\frac{1}{2},\frac{1}{2} \right]$ .
    	\item[(2)] The $L^2-$ adjoint operator of $\mathbb{K} ^*$, $\mathbb{K} $ is denoted as
    	\begin{equation*}
    		\mathbb{K} =\left[ \begin{matrix}
    			-\mathcal{K} _{D_{1}}&		\mathcal{D} _{D_{2}} \vspace{2ex} \\
    			-\mathcal{D} _{D_{1}}&		\mathcal{K} _{D_{2}}\\
    		\end{matrix} \right],
    	\end{equation*}
    	where $\mathcal{D}_{D_1}$, $\mathcal{D}_{D_2}$ are the double-layer potentials associated with $D_1$ and $D_2$, respectively.
    	\item[(3)] The operator $\mathbb{S}$ is defined as
    	\begin{equation*}
    		\mathbb{S} =\left[ \begin{matrix}
    			\mathcal{S} _{D_{1}}&		\mathcal{S} _{D_{2}} \vspace{2ex} \\
    			\mathcal{S} _{D_{1}}&		\mathcal{S} _{D_{2}}\\
    		\end{matrix} \right],
    	\end{equation*}
    	is self-adjoint, and $-\mathbb{S} \geqq  0$ on $L_{0}^{2}\left( \partial D_{1} \right) \times L_{0}^{2}\left( \partial D_{2} \right) $.
    	\item[(4)] Calder\'{o}n identity holds
    	\begin{equation*}
    		\mathbb{S} \mathbb{K} ^*=\mathbb{K} \mathbb{S}.
    	\end{equation*}
    	i.e., $\mathbb{S} \mathbb{K} ^*$ is self-adjoint.
    \end{enumerate}

    These properties comprise an inner product mapping defined on $L_{0}^{2}\left( \partial D_{1} \right) \times L_{0}^{2}\left( \partial D_{2} \right)$,
    \begin{equation*}
    	\left( \Phi ,\Psi \right) \mapsto \left< \Phi ,-\mathbb{S} \left[ \Psi \right] \right> =-\left< \varphi _1,-\mathcal{S} _{D_{\delta _1}}\left[ \psi _1 \right] \right> -\left< \varphi _2,-\mathcal{S} _{D_{\delta _2}}\left[ \psi _2 \right] \right>.
    \end{equation*}
    Let $\mathcal{H} ^2=L_{0}^{2}\left( \partial D_{1} \right) \times L_{0}^{2}\left( \partial D_{2} \right)$ be a Hilbert space with an inner product as above, i.e.,
    \begin{equation}
    	\label{H^2_product}
    	\left< \Phi ,\Psi \right> _{\mathcal{H} ^2}:=\left< \Phi ,-\mathbb{S} \left[ \Psi \right] \right> .
    \end{equation}
    From the Calder\'{o}n identity, the NP-type operator $\mathbb{K}^*$ is self-adjoint on the space $\mathcal{H}^2$ .Let $\lambda _1,\lambda _2,\cdots $ be the non-zero eigenvalues of $\mathbb{K}^*$ ($\left|\lambda_1 \right|\ge \left| \lambda_2 \right|  \ge \cdots$, counting multiplcities), and $\Psi_n$ be the corresponding normalized eigenfunction, i.e. $\left\| \Psi_n \right\| _{\mathcal{H} ^2}=1$. Then, we have
    \begin{equation*}
    	\left< \Psi _i,\Psi _j \right> _{\mathcal{H} ^2}=\delta _{ij},
    \end{equation*}
    where $\delta _{ij}$ denotes the Kronecker delta symbol, and $\mathbb{K} ^*$ has the following spectral decomposition
    \begin{equation*}
    	\mathbb{K} ^*\left[ \Phi \right] =\sum_{n=1}^{\infty}{\lambda _n\left< \Phi ,\Psi _n \right> _{\mathcal{H} ^2}\Psi _n}, \quad
    \Phi \in \mathcal{H} ^2.
    \end{equation*}
    Since $\mathbb{K}^*$ is a Hilbert-Schmidt operator, the solution $\varPhi =\left( \varphi ,\phi \right)^T$ of the (\ref{martix1}) can be represented as
    \begin{equation}
    	\label{Phi}
    	\varPhi =\sum_{n=1}^{\infty}{\dfrac{\left< f,\Psi _n \right> _{\mathcal{H} ^2}}{\lambda _n+z\left( \omega \right)}\Psi _n} .
    \end{equation}

\subsection{Plasmon resonance}
    For the mathematical definition of plasmon resonance is mentioned in several references \cite{ammari2013spectral,ammari2017mathematical,fang2023plasmon}, by spectral decomposition of the $\mathbb{K} ^*$, we obtain the solution of the boundary integral equations (\ref{martix1}) in the form (\ref{Phi}), which is non-zero due to the non-zero value of $\left< f,\Psi _n \right> _{\mathcal{H} ^2} $, when $- z\left( \omega \right)$ converges infinitely to the eigenvalue of the $\mathbb{K} ^*$ , the interface density function $\varPhi$ will undergo a rapid bursting, and the scattering field $u-H$ will exhibit resonant behavior. From this, we define the following plasmon resonance condition.
    \begin{defn}
    Let $\lambda _1,\lambda _2,\cdots $ be the non-zero eigenvalues of the NP-type operator $\mathbb{K} ^*$, satisfying $\left| \lambda _1 \right|\ge \left| \lambda _2 \right| \ge \cdots$, counting multiplcities. Then plasmon resonance occurs if the $z\left( \omega \right)$ in (\ref{z_omega}) is fulfilled:
    \begin{equation}
    	    \label{resonance_condition}
    	    \left|  z\left( \omega \right) + \lambda _n \right| \ll 1.
    \end{equation}
    The inequality (\ref{resonance_condition}) is called the plasmon resonance condition, and the corresponding frequency $\omega$ is called the plasmonic resonant frequency.
    \end{defn}
    \begin{rem}
    Each eigenvalue of the NP-type operator $\mathbb{K} ^*$ corresponds to a resonance mode, so the plasmonic resonant frequency is not unique.
    \end{rem}
    \begin{defn}
    Assume that the non-zero eigenvalues of the NP-type operator are of the form
    \begin{equation*}
    \lambda _n^{(0)}+\epsilon \lambda _n^{(1)}+\mathcal {O}(\epsilon ^2),\quad n=1,2,\cdots.
    \end{equation*}
    We say that $\omega$ is a first-order corrected plasmon resonance frequency if
    \begin{equation}
    \label{first-order-resonance-condition}
    \left|  z\left( \omega \right) +\lambda _n^{(0)}+\epsilon \lambda _n^{(1)} \right| \ll 1.
    \end{equation}
    The inequality (\ref{first-order-resonance-condition}) is called the first-order corrected plasmon resonance condition.
    \end{defn}
\section{Perturbations of concentric disk}\label{sec:pertur}
\subsection{Rescaling and asymptotic analysis}
    As described in section \ref{sec:math-setting}, the total field $u$ satisfies model (\ref{perturbations_math_model}) due to electromagnetic scattering. Along the interfaces $\partial D_{\epsilon _1,\delta _1}$ and $\partial D_{\epsilon _2,\delta _2}$, the transmission conditions that are fulfilled
    \begin{align*}
    	\begin{cases}
    		\varepsilon _s\left( \omega \right) \left. \dfrac{\partial u}{\partial \nu}\left( x \right) \right|_+=\varepsilon _m\left. \dfrac{\partial u}{\partial \nu}\left( x \right) \right|_-, \quad &x\in \partial D_{\epsilon _1,\delta _1}, \vspace{1ex} \\
    		\varepsilon _m\left. \dfrac{\partial u}{\partial \nu}\left( x \right) \right|_+=\varepsilon _s\left( \omega \right) \left. \dfrac{\partial u}{\partial \nu}\left( x \right) \right|_-,  \quad &x\in \partial D_{\epsilon _2,\delta _2}. \\
    	\end{cases}
    \end{align*}
    Then the potential pair $\left( \bar{\varphi},\bar{\phi} \right)$ is a solution to the following boundary integral equations
    \begin{equation}
        \label{martix5.1}
        \left\{
    	\begin{aligned}    		
    		&\left( z\left( \omega \right) I-\mathcal{K} _{D_{\epsilon _1,\delta _1}}^{*} \right) \left[ \bar{\varphi} \right] \left( x \right) -\dfrac{\partial \mathcal{S} _{D_{\epsilon _2,\delta _2}}\left[ \bar{\phi} \right]}{\partial \bar{\nu}_1}  \left( x \right) =\dfrac{\partial H}{\partial \bar{\nu}_1}\left( x \right),  \quad && x\in \partial D_{\epsilon _1,\delta _1}, \vspace{1ex} \\
    		&\dfrac{\partial \mathcal{S} _{D_{\epsilon _1,\delta _1}}\left[ \bar{\varphi} \right]}{\partial \bar{\nu}_2} \left( x \right) +\left( z\left( \omega \right) I+\mathcal{K} _{D_{\epsilon _2,\delta _2}}^{*} \right) \left[ \bar{\phi} \right] \left( x \right) =-\dfrac{\partial H}{\partial \bar{\nu}_2}\left( x \right),  \quad && x\in \partial D_{\epsilon _2,\delta _2},\\
    	\end{aligned}
        \right.
    \end{equation}
    where the formula for $z\left( \omega \right)$ is the same as (\ref{z_omega}). Then (\ref{martix5.1}) can be rewritten to
    \begin{equation*}
    	\left( z\left( \omega \right) \mathbb{I} +\left[ \begin{matrix}
    		-\mathcal{K} _{D_{\epsilon _1,\delta _1}}^{*} & -\dfrac{\partial \mathcal{S} _{D_{\epsilon _2,\delta _2}}}{\partial \bar{\nu}_1} \vspace{2ex} \\
    		\dfrac{\partial \mathcal{S} _{D_{\epsilon _1,\delta _1}}}{\partial \bar{\nu}_2} & \mathcal{K} _{D_{\epsilon _2,\delta _2}}^{*}\\
    	\end{matrix} \right] \right) \left[ \begin{array}{c}
    		\bar{\varphi} \vspace{2ex} \\
    		\bar{\phi}\\
    	\end{array} \right] =\left[ \begin{array}{c}
    		\dfrac{\partial H}{\partial \bar{\nu}_1} \vspace{2ex} \\
    		-\dfrac{\partial H}{\partial \bar{\nu}_2}\\
    	\end{array} \right] .
    \end{equation*}
    Now, we will define the NP-type operator
    \begin{equation*}
    	\mathbb{K}_{\delta_1,\delta_2 ,\epsilon_1, \epsilon_2} ^*= \left[ \begin{matrix}
    		-\mathcal{K} _{D_{\epsilon _1,\delta _1}}^{*} & -\dfrac{\partial \mathcal{S} _{D_{\epsilon _2,\delta _2}}}{\partial \bar{\nu}_1} \vspace{2ex} \\
    		\dfrac{\partial \mathcal{S} _{D_{\epsilon _1,\delta _1}}}{\partial \bar{\nu}_2} & \mathcal{K} _{D_{\epsilon _2,\delta _2}}^{*}\\
    	\end{matrix} \right].
    \end{equation*}

    In order to explore the crucial influence of the scale factors $\delta _1$ and $\delta _2$ on the resonance frequency, we perform a scale transformation of (\ref{martix5.1}) and use the scale invariant property of the NP operator. For this purpose, we first introduce two associated functions. For the density functions $\bar{\varphi}\left( \bar{x} \right)$ and $\bar{\phi}\left( \bar{x} \right)$ defined on $\partial D_{\epsilon _1,\delta _1}$ and $\partial D_{\epsilon _2,\delta _2}$, respectively, introduce the associated density functions $\tilde{\eta}_1\left( \varphi \right) \left( \tilde{x} \right)$ and $\tilde{\eta}_2\left( \phi \right) \left( \tilde{x} \right)$ defined on $\partial D_{\epsilon _1}$ and $\partial D_{\epsilon _2}$, which satisfy
    \begin{align*}
    	\tilde{\eta}_1\left( \varphi \right) \left( \tilde{x} \right) &  =\bar{\varphi}\left( z+\delta _1\tilde{x} \right) , \\
    	\tilde{\eta}_2\left( \phi \right) \left( \tilde{x} \right) &  =\bar{\phi}\left( z+\delta _2\tilde{x} \right) .
    \end{align*}

    Based on the scale invariance property of the NP operator (\ref{scale-invariant2}), we obtain the following results
    \begin{align*}
    	& \mathcal{K} _{D_{\epsilon _1,\delta _1}}^{*}\left[ \bar{\varphi} \right] \left( \bar{x} \right)   =\mathcal{K} _{D_{\epsilon _1}}^{*}\left[ \tilde{\eta}_1\left( \varphi \right) \right] \left( \tilde{x} \right) , \ \  \bar{x}\in \partial D_{\epsilon _1,\delta _1},\tilde{x}\in \partial D_{\epsilon _1}, \\
        & \mathcal{K} _{D_{\epsilon _2,\delta _2}}^{*}\left[ \bar{\phi} \right] \left( \bar{x} \right)   =\mathcal{K} _{D_{\epsilon _2}}^{*}\left[ \tilde{\eta}_2\left( \phi \right) \right] \left( \tilde{x} \right) , \ \  \bar{x}\in \partial D_{\epsilon _2,\delta _2},\tilde{x}\in \partial D_{\epsilon _2}.
    \end{align*}
    Considering the center of the $D_{\epsilon _1,\delta _1}$ and $D_{\epsilon _2,\delta _2}$ at the origin, the transmission conditions (\ref{martix5.1}) are equivalent to
    \begin{equation}
    	\label{tbc5.2}
        \left\{
    	\begin{aligned}
    		&\left( z\left( \omega \right) I-\mathcal{K} _{D_{\epsilon _1}}^{*} \right) \left[ \tilde{\eta}_1\left( \varphi \right) \right] \left( \tilde{x} \right) -\dfrac{\delta _2}{\delta _1}\dfrac{\partial \mathcal{S} _{D_{\epsilon _2}}\left[ \tilde{\eta}_2\left( \phi \right) \right]}{\partial \tilde{\nu}_1} \left(\dfrac{\delta _1}{\delta _2}\tilde{x} \right) =\dfrac{1}{\delta _1}\dfrac{\partial H\left( \delta _1\tilde{x} \right)}{\partial \tilde{\nu}_1}, \quad  && \tilde{x}\in \partial D_{\epsilon _1},
    		\vspace{2ex} \\
    		&\dfrac{\delta _1}{\delta _2}\dfrac{\partial \mathcal{S} _{D_{\epsilon _1}}\left[ \tilde{\eta}_1\left( \varphi \right) \right]}{\partial \tilde{\nu}_2} \left( \dfrac{\delta _2}{\delta _1} \tilde{x} \right) +\left( z\left( \omega \right) I+\mathcal{K} _{D_{\epsilon _2}}^{*} \right) \left[ \tilde{\eta}_2\left( \phi \right) \right] \left( \tilde{x} \right) =-\dfrac{1}{\delta _2}\dfrac{\partial H\left( \delta _2\tilde{x} \right)}{\partial \tilde{\nu}_2},  \quad &&\tilde{x}\in \partial D_{\epsilon _2}.\\
    	\end{aligned}
        \right.
    \end{equation}

    For $i=1,2$, $0< \beta <1$, let $X_i\left( t \right) :\left[ 0, 2\pi \right] \rightarrow \mathbb{R} ^2$ be of class $C^{2, \beta}$, satisfying $\left| X_{i}^{'}\left( t \right) \right|=1$ for all $t\in \left[ 0,2\pi \right] $. We consider that $D_1$ and $D_2$ are both bounded $C^{2, \beta}$-domains in $\mathbb{R} ^2$, with interface $\partial D_i$ parametrized by $X_i\left( t \right)$
    \begin{equation*}
    	\partial D_i=\left\{ x~|~x=X_i\left( t \right) ,t\in \left[ 0,2\pi \right] \right\} ,
    \end{equation*}
    the outward unit normal vectors $\nu _i$ on $\partial D_i$ is given by
    \begin{equation*}
    	\nu _i\left( x \right) =R_{-\frac{\pi}{2}}X_{i}^{'}\left( t \right) ,
    \end{equation*}
    where $R_{-\frac{\pi}{2}}$ denotes the rotation $-\pi/2$. Furthermore, the tangential vector at $x$ is defined as $T_i\left( x \right)=X_{i}^{'}\left( t \right) $, satisfying $X_{i}^{'}\left( t \right) \bot X_{i}^{''}\left( t \right) $. The curvature $\tau _i \left( t \right) $ is defined by
    \begin{equation*}
    	X_{i}^{''}\left( t \right) =\tau _i\left( t \right) \nu _i\left( x \right) .
    \end{equation*}
    We use $h_i\left( t \right) $ for $h_i\left( X_i\left( t \right) \right) $, and $h_{i}^{'}\left( t \right)$ for the   tangential derivatives of $h_i\left( x \right)$. The $\epsilon _i$-normal perturbation of $D_i$ yields the domain $D_{\epsilon _i}$, whose interface $\partial D_{\epsilon _i}$ can be defined as
    \begin{equation*}
    	\partial D_{\epsilon _i}=\left\{ \tilde{x}~|~\tilde{x}= x +\epsilon _ih_i\left( x \right) \nu _i\left( x \right) =X_i\left( t \right) +\epsilon _ih_i\left( t \right) R_{-\frac{\pi}{2}}X_{i}^{'}\left( t \right), x \in \partial D_i \right\} ,
    \end{equation*}
    where the function $h_i\in C^{2,\beta}\left( \partial D_i \right) $. Zribi has provided the proofs for the subsequent lemmas while conducting sensitivity analysis utilizing layer potential techniques \cite{ammari2010conductivity}.
    \begin{lem}
    \label{lemma4.1}
    For $\tilde{\nu}_i\left( \tilde{x} \right)$, the outward unit normal vector on $\partial D_{\epsilon _i}$ at $\tilde{x}$, we have
    \begin{equation*}
    	\tilde{\nu}_i\left( \tilde{x} \right) =\frac{\left[ 1-\epsilon _ih_i\left( t \right) \tau _i\left( x \right) \right] \nu _i\left( x \right) -\epsilon _ih_{i}^{'}\left( t \right) T_i\left( x \right)}{\sqrt{\left[ \epsilon _ih_{i}^{'}\left( t \right) \right] ^2+\left[ 1-\epsilon _ih_i\left( t \right) \tau _i\left( x \right) \right] ^2}},
    \end{equation*}
    and hence $\tilde{\nu}_i\left( \tilde{x} \right)$ can be  expanded uniformly as
    \begin{equation*}
    	\tilde{\nu}_i\left( \tilde{x} \right) =\sum_{n=0}^{\infty}{\epsilon _{i}^{n}\nu _{i}^{\left( n \right)}\left( x \right)}, \quad x\in \partial D_i,
    \end{equation*}
    where the vector-valued function $\nu _{i}^{\left( n \right)}$ is bounded. In particular, the first two terms are given by
    \begin{equation*}
    	\nu _{i}^{\left( 0 \right)}\left( x \right) =\nu _i\left( x \right) , \quad \nu _{i}^{\left( 1 \right)}\left( x \right) =-h_{i}^{'}\left( t \right) T_i\left( x \right) ,
    \end{equation*}
    then
    \begin{equation}
    	\label{nv_i_expansion}
    	\tilde{\nu}_i\left( \tilde{x} \right) =\nu _i\left( x \right) -\epsilon _ih_{i}^{'}\left( t \right) T_i\left( x \right) +O\left( \epsilon _{i}^{2} \right) .
    \end{equation}
    \end{lem}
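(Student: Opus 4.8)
The plan is to parametrize the perturbed interface $\partial D_{\epsilon_i}$ directly, differentiate that parametrization to obtain its tangent vector, rotate by $-\pi/2$ to produce an unnormalized outward normal, and finally normalize; the asymptotic expansion then drops out by expanding the resulting quotient in powers of $\epsilon_i$. Concretely, $\partial D_{\epsilon_i}$ is parametrized by $\tilde{X}_i(t) = X_i(t) + \epsilon_i h_i(t)\nu_i(x)$ with $\nu_i(x) = R_{-\frac{\pi}{2}}X_i'(t)$, so the whole computation reduces to the differential geometry of the base curve $X_i$ together with the rotation algebra of $R_{-\frac{\pi}{2}}$.

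First I would compute $\tilde{X}_i'(t)$. The only non-routine input is the derivative of the normal field: since $\nu_i = R_{-\frac{\pi}{2}}X_i'$, we have $\frac{\d}{\d t}\nu_i = R_{-\frac{\pi}{2}}X_i'' = \tau_i R_{-\frac{\pi}{2}}\nu_i$, and because applying $R_{-\frac{\pi}{2}}$ twice is rotation by $-\pi$, i.e.\ negation, $R_{-\frac{\pi}{2}}\nu_i = R_{-\frac{\pi}{2}}R_{-\frac{\pi}{2}}X_i' = -X_i' = -T_i$, which yields the Frenet-type relation $\frac{\d}{\d t}\nu_i = -\tau_i T_i$. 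Substituting into the product rule gives
\[
\tilde{X}_i'(t) = \left( 1 - \epsilon_i h_i \tau_i \right) T_i(x) + \epsilon_i h_i'(t)\,\nu_i(x).
\]
Rotating by $-\frac{\pi}{2}$ and using $R_{-\frac{\pi}{2}}T_i = \nu_i$ together with $R_{-\frac{\pi}{2}}\nu_i = -T_i$ produces the unnormalized outward normal $\left(1 - \epsilon_i h_i\tau_i\right)\nu_i - \epsilon_i h_i' T_i$, whose length equals $|\tilde{X}_i'| = \sqrt{[\epsilon_i h_i']^2 + [1 - \epsilon_i h_i\tau_i]^2}$. Dividing gives precisely the closed-form expression for $\tilde{\nu}_i(\tilde{x})$ in the lemma; I would briefly confirm orientation, noting that for small $\epsilon_i$ the rotated tangent stays in the outward half-space by continuity from the $\epsilon_i = 0$ case.

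For the expansion I would treat the numerator and the reciprocal square root separately. Writing $a = 1 - \epsilon_i h_i\tau_i$ and $b = \epsilon_i h_i'$, one has $a^2 + b^2 = 1 - 2\epsilon_i h_i\tau_i + O(\epsilon_i^2)$, hence $(a^2+b^2)^{-1/2} = 1 + \epsilon_i h_i\tau_i + O(\epsilon_i^2)$; multiplying by the numerator, the $\nu_i$-coefficients at first order cancel and one is left with $\tilde{\nu}_i = \nu_i - \epsilon_i h_i' T_i + O(\epsilon_i^2)$, identifying $\nu_i^{(0)} = \nu_i$ and $\nu_i^{(1)} = -h_i' T_i$.

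The main (and fairly modest) obstacle is not any single algebraic step but the justification of the \emph{uniform} convergence of the series $\tilde{\nu}_i = \sum_{n\ge 0}\epsilon_i^{\,n}\nu_i^{(n)}$ with bounded coefficients. Here I would observe that $a^2 + b^2$ is a polynomial in $\epsilon_i$ tending to $1$, so $\epsilon_i \mapsto (a^2+b^2)^{-1/2}$ is real-analytic near $\epsilon_i = 0$; since $X_i$ is $C^{2,\beta}$ and $h_i \in C^{2,\beta}(\partial D_i)$ guarantee that $\tau_i$, $h_i$, and $h_i'$ are bounded uniformly in $t \in [0,2\pi]$, the binomial expansion of $(a^2+b^2)^{-1/2}$ converges uniformly for $\epsilon_i$ below a threshold independent of $t$, and the product with the polynomially $\epsilon_i$-dependent numerator inherits this uniformity, yielding the claimed bounded vector-valued coefficients $\nu_i^{(n)}$.
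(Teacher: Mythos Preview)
Your proof is correct and self-contained. The paper itself does not actually prove this lemma: it simply attributes Lemmas~\ref{lemma4.1}--\ref{lemma4.4} to the reference \cite{ammari2010conductivity} (``Zribi has provided the proofs for the subsequent lemmas\ldots'') and moves on. Your argument---differentiating the explicit parametrization $\tilde{X}_i(t)=X_i(t)+\epsilon_i h_i(t)\nu_i(x)$, using the Frenet relation $\frac{\d}{\d t}\nu_i=-\tau_i T_i$ obtained from $R_{-\frac{\pi}{2}}^2=-I$, rotating the tangent to get the unnormalized normal, and then expanding the normalizing factor via the binomial series---is exactly the computation one finds in that cited source, and your treatment of the uniformity (boundedness of $\tau_i,h_i,h_i'$ from the $C^{2,\beta}$ hypotheses, hence a $t$-independent radius of convergence for the analytic function $\epsilon_i\mapsto(a^2+b^2)^{-1/2}$) is appropriate. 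In short, you have supplied the details the paper outsources.
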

    \begin{lem}
    For $\d\sigma _{\epsilon _i}\left( \tilde{y} \right) $, the length element on $\partial D_{\epsilon _i}$ at $\tilde{x}$, we have
    \begin{equation*}
    	\d\sigma _{\epsilon _i}\left( \tilde{y} \right) = \sqrt{\left[ 1-\epsilon _i\tau _i\left( s \right) h_i\left( s \right) \right] ^2+\left[ \epsilon _ih_{i}^{'}\left( s \right) \right] ^2}\d s,
    \end{equation*}
    and hence $\d\sigma _{\epsilon _i}\left( \tilde{y} \right) $ can be  expanded uniformly as
    \begin{equation*}
    	\d\sigma _{\epsilon _i}\left( \tilde{y} \right) =\sum_{n=0}^{\infty}{\epsilon _{i}^{n}\sigma _{i}^{\left( n \right)}\left( y \right)}\d\sigma \left( y \right),
    \end{equation*}
    where $\sigma _{i}^{\left( n \right)}$ is bounded. In particular, the first two terms are given by
    \begin{equation*}
    	\sigma _{i}^{\left( 0 \right)}\left( y \right) =1, \quad \sigma _{i}^{\left( 1 \right)}\left( y \right) =-\tau _i\left( y \right) h_i\left( y \right) ,
    \end{equation*}
    then
    \begin{equation*}
    	\d\sigma _{\epsilon _i}\left( \tilde{y} \right) =\left[ 1-\epsilon _i\tau _i\left( y \right) h_i\left( y \right) +O\left( \epsilon _{i}^{2} \right) \right] \d\sigma \left( y \right) .
    \end{equation*}
    \end{lem}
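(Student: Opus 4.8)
The plan is to compute the arc-length element of the perturbed curve directly from its parametrization and then Taylor-expand the resulting square root. Since $\partial D_i$ is parametrized by arc length via $X_i(t)$ with $|X_i'(t)|=1$, the perturbed interface $\partial D_{\epsilon_i}$ inherits the parametrization $\tilde{X}_i(t) = X_i(t) + \epsilon_i h_i(t)\nu_i(t)$, where $\nu_i(t) = R_{-\frac{\pi}{2}}X_i'(t)$. First I would differentiate this in $t$, which reduces the entire computation to knowing how the unit normal varies along the curve.

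The key step is to establish the Frenet-type relation $\nu_i'(t) = -\tau_i(t)T_i(t)$. This follows at once from $\nu_i = R_{-\frac{\pi}{2}}T_i$, the defining relation $X_i''(t) = T_i'(t) = \tau_i(t)\nu_i(t)$, and the rotation identity $R_{-\frac{\pi}{2}}\nu_i = R_{-\pi}T_i = -T_i$. Substituting into $\tilde{X}_i'(t) = X_i'(t) + \epsilon_i h_i'(t)\nu_i(t) + \epsilon_i h_i(t)\nu_i'(t)$ and using $X_i' = T_i$ yields the orthogonal decomposition
\begin{equation*}
\tilde{X}_i'(t) = \left[1 - \epsilon_i\tau_i(t)h_i(t)\right]T_i(t) + \epsilon_i h_i'(t)\nu_i(t).
\end{equation*}
Because $T_i$ and $\nu_i$ are orthonormal, $|\tilde{X}_i'(t)|^2$ is simply the sum of the squares of the two coefficients, and since $\d\sigma(y) = |X_i'(t)|\,\d t = \d t$, the first displayed identity follows immediately with $s$ playing the role of the arc-length parameter $t$.

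For the asymptotic expansion I would write $|\tilde{X}_i'(t)|^2 = 1 - 2\epsilon_i\tau_i h_i + \epsilon_i^2\left(\tau_i^2 h_i^2 + (h_i')^2\right)$ and apply the binomial series $\sqrt{1+u} = 1 + \tfrac12 u - \tfrac18 u^2 + \cdots$ with $u = -2\epsilon_i\tau_i h_i + \epsilon_i^2\left(\tau_i^2 h_i^2 + (h_i')^2\right)$. Collecting powers of $\epsilon_i$ recovers $\sigma_i^{(0)} = 1$ and $\sigma_i^{(1)} = -\tau_i h_i$ and produces the truncated form $\d\sigma_{\epsilon_i}(\tilde{y}) = \left[1 - \epsilon_i\tau_i(y)h_i(y) + O(\epsilon_i^2)\right]\d\sigma(y)$.

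The only genuine point requiring care — and the closest thing to an obstacle — is justifying that the expansion is uniform in $t$ and that each coefficient $\sigma_i^{(n)}$ is bounded. This rests on the $C^{2,\beta}$ regularity of $X_i$ together with $h_i \in C^{2,\beta}(\partial D_i)$: on the compact curve $\partial D_i$ the quantities $\tau_i$, $h_i$, $h_i'$ are all bounded, so for $\epsilon_i$ sufficiently small the radicand stays in a fixed compact neighbourhood of $1$, guaranteeing uniform convergence of the binomial series. Since each $\sigma_i^{(n)}$ is a fixed polynomial in $\tau_i$, $h_i$, $h_i'$, boundedness is then automatic, which completes the proof.
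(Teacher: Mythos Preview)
Your argument is correct: the direct differentiation of $\tilde{X}_i(t)=X_i(t)+\epsilon_i h_i(t)\nu_i(t)$ together with the Frenet relation $\nu_i'=-\tau_i T_i$ gives exactly the orthogonal decomposition and hence the radicand, and the uniform binomial expansion is justified by the boundedness of $\tau_i,h_i,h_i'$ on the compact curve. The paper does not supply its own proof of this lemma but instead attributes it to the reference \cite{ammari2010conductivity}; your computation is the standard one and is precisely what that reference carries out.
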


    Let $x=X_i\left( t \right)$, $\tilde{x}=x+\epsilon _ih_i\left( t \right) \nu _i\left( x \right)$, $y=X_i\left( s \right)$, $\tilde{y}=y+\epsilon _ih_i\left( s \right) \nu _i\left( y \right)$, we have
    \begin{equation*}
    	\tilde{x}-\tilde{y}=x-y+\epsilon _i\left( h_i\left( t \right) \nu _i\left( x \right) -h_i\left( s \right) \nu _i\left( y \right) \right) ,
    \end{equation*}
    furthermore
    \begin{equation}
    	\label{1/|tilde{x-y}|^2_expansion}
    	\frac{1}{\left| \tilde{x}-\tilde{y} \right|^2}=\frac{1}{\left| x-y \right|^2}\frac{1}{1+2\epsilon _iF_i\left( x,y \right) +\epsilon _{i}^{2}G_i\left( x,y \right)},
    \end{equation}
    where $F_i\left( x,y \right)$ and $G_i\left( x,y \right)$ are denoted by
    \begin{align*}
    	F_i\left( x,y \right) & =\frac{\left< x-y,h_i\left( t \right) \nu _i\left( x \right) -h_i\left( s \right) \nu _i\left( y \right) \right>}{\left| x-y \right|^2}, \\
    	G_i\left( x,y \right) & =\frac{\left| h_i\left( t \right) \nu _i\left( x \right) -h_i\left( s \right) \nu _i\left( y \right) \right|^2}{\left| x-y \right|^2}.
    \end{align*}
    \begin{lem}
    From \textnormal{(\ref{nv_i_expansion})} and \textnormal{(\ref{1/|tilde{x-y}|^2_expansion})}, we obtain the following uniform expansion
    \begin{equation*}
    	\frac{\left< \tilde{x}-\tilde{y},\tilde{\nu}_i\left( \tilde{x} \right) \right>}{\left| \tilde{x}-\tilde{y} \right|^2}\d\sigma _{\epsilon _i}\left( \tilde{y} \right) =\sum_{n=0}^{\infty}{\epsilon _{i}^{n}}K_{i}^{(n)}\left( x,y \right) \d\sigma \left( y \right) ,
    \end{equation*}
    where
    \begin{align*}
    	& K_{i}^{(0)}\left( x,y \right) =
    	\dfrac{\left< x-y,\nu _i\left( x \right) \right>}{\left| x-y \right|^2} , \\[1mm]
        & K_{i}^{(1)}\left( x,y \right) =
        K_{i}^{(0)}\left( x,y \right) \cdot P_i\left( x,y \right) +Q_i\left( x,y \right) , \\[1mm]
    	& P_i\left( x,y \right) =
    	-2F_i\left( x,y \right) +h_i\left( x \right) \nu _i\left( x \right) -h_i\left( y \right) \nu _i\left( y \right) ,  \\[1mm]
    	& Q_i\left( x,y \right) =
    	\dfrac{\left< h_i\left( t \right) \nu _i\left( x \right) -h_i\left( s \right) \nu _i\left( y \right) ,\nu _i\left( x \right) \right> }{\left| x-y \right|^2}  -\dfrac{\left< x-y,\tau _i\left( x \right) h_i\left( t \right) \nu _i\left( x \right) +h_{i}^{'}\left( t \right) T_i\left( x \right) \right>}{\left| x-y \right|^2}.
    \end{align*}
    \end{lem}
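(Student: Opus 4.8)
The plan is to substitute the three expansions already recorded---\eqref{nv_i_expansion} for the perturbed normal $\tilde\nu_i(\tilde x)$, the geometric-series identity \eqref{1/|tilde{x-y}|^2_expansion} for $|\tilde x-\tilde y|^{-2}$, and the length-element expansion $\d\sigma_{\epsilon_i}(\tilde y)=(1-\epsilon_i\tau_i(y)h_i(y)+O(\epsilon_i^2))\d\sigma(y)$---into the left-hand side, and then multiply the three resulting power series in $\epsilon_i$ and collect equal powers. Using $\tilde x-\tilde y=(x-y)+\epsilon_i(h_i(t)\nu_i(x)-h_i(s)\nu_i(y))$ together with $\tilde\nu_i(\tilde x)=\nu_i(x)-\epsilon_i h_i'(t)T_i(x)+O(\epsilon_i^2)$, the numerator $\langle\tilde x-\tilde y,\tilde\nu_i(\tilde x)\rangle$ has zeroth-order part $\langle x-y,\nu_i(x)\rangle$ and first-order part
\[
-h_i'(t)\langle x-y,T_i(x)\rangle+\langle h_i(t)\nu_i(x)-h_i(s)\nu_i(y),\nu_i(x)\rangle .
\]

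First I would read off the $\epsilon_i^0$ coefficient. Since $|\tilde x-\tilde y|^{-2}=|x-y|^{-2}(1-2\epsilon_iF_i+O(\epsilon_i^2))$ and $\d\sigma_{\epsilon_i}(\tilde y)=(1-\epsilon_i\tau_i(y)h_i(y)+O(\epsilon_i^2))\d\sigma(y)$, the leading coefficient is exactly $K_i^{(0)}(x,y)=\langle x-y,\nu_i(x)\rangle/|x-y|^2$. Collecting the $\epsilon_i^1$ coefficient then produces three contributions: the first-order numerator divided by $|x-y|^2$; the term $-2F_iK_i^{(0)}$ from the kernel; and the term $-\tau_i(y)h_i(y)K_i^{(0)}$ from the length element.

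The only genuine work is the algebraic regrouping of this first-order coefficient into the stated shape $K_i^{(1)}=K_i^{(0)}P_i+Q_i$. I would add and subtract $\tau_i(x)h_i(t)K_i^{(0)}$, absorbing the pair $\tau_i(x)h_i(t)-\tau_i(y)h_i(y)$ together with $-2F_i$ into $K_i^{(0)}P_i$, and then writing the leftover $-\tau_i(x)h_i(t)K_i^{(0)}=-\langle x-y,\tau_i(x)h_i(t)\nu_i(x)\rangle/|x-y|^2$ and $-h_i'(t)\langle x-y,T_i(x)\rangle/|x-y|^2=-\langle x-y,h_i'(t)T_i(x)\rangle/|x-y|^2$, which, combined with the remaining numerator piece, assemble exactly into $Q_i$. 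A useful consistency check is that $K_i^{(1)}$ must be scalar-valued, which forces the factors in $P_i$ to be the scalar curvature $\tau_i$ rather than the vector $\nu_i$; keeping straight whether each shape factor is evaluated at $x=X_i(t)$ or $y=X_i(s)$ is the easiest place to slip.

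I expect the main obstacle to be less the first-order identity than the justification that the full series $\sum_n\epsilon_i^nK_i^{(n)}$ converges \emph{uniformly} in $(x,y)$. This reduces to checking that the geometric series for $|\tilde x-\tilde y|^{-2}$ converges uniformly for $\epsilon_i$ small, i.e. that $F_i$ and $G_i$ are bounded on $\partial D_i\times\partial D_i$. Because $D_i$ is of class $C^{2,\beta}$, the difference $h_i(t)\nu_i(x)-h_i(s)\nu_i(y)$ vanishes to first order as $y\to x$, so both $F_i$ and $G_i$ have only removable singularities and stay bounded; the coefficients $K_i^{(n)}$ then inherit the uniform bounds supplied by \eqref{nv_i_expansion} and the length-element lemma, giving the claimed uniformly convergent expansion.
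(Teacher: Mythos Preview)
The paper does not supply its own proof of this lemma; it attributes the result (together with the two preceding lemmas) to the cited work of Ammari--Kang--Lim--Zribi. Your plan---substitute the three expansions for $\tilde\nu_i(\tilde x)$, $|\tilde x-\tilde y|^{-2}$, and $\d\sigma_{\epsilon_i}(\tilde y)$, multiply the resulting power series, and collect powers of $\epsilon_i$---is exactly the natural derivation, and your identification of the zeroth- and first-order coefficients is correct. Your remark on uniform convergence (boundedness of $F_i$ and $G_i$ via the $C^{2,\beta}$ regularity of $\partial D_i$, so that the geometric series for $|\tilde x-\tilde y|^{-2}$ converges uniformly) is also the right justification.

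You are also correct that the printed $P_i$ contains a typographical slip: as written, $h_i(x)\nu_i(x)-h_i(y)\nu_i(y)$ is vector-valued and cannot sit alongside the scalar $-2F_i$ or be multiplied by $K_i^{(0)}$. Carrying out your add-and-subtract of $\tau_i(x)h_i(t)K_i^{(0)}$ shows that the intended expression is
\[
P_i(x,y)=-2F_i(x,y)+\tau_i(x)h_i(x)-\tau_i(y)h_i(y),
\]
with the curvatures $\tau_i$ in place of the normals $\nu_i$; this is what the cited reference records and what is actually needed in the subsequent Lemma~\ref{lemma4.4}.
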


    Now, we introduce the sequence of integral operators $\left\{\mathcal{K} _{D_i}^{\left( n \right)} \right\}  _{n\in \mathbb{N}}$, for any $\phi \in L^2\left( \partial D_i \right)$, we define
    \begin{equation*}
    	\mathcal{K} _{D_i}^{\left( n \right)}\phi \left( x \right) =\frac{1}{2\pi}\int_{\partial D_i}{K_{i}^{(n)}\left( x,y \right) \phi \left( y \right) \d\sigma \left( y \right)}, \quad \forall n\ge 0.
    \end{equation*}
    Obviously, $ \mathcal{K} _{D_i}^{\left( 0 \right)}=\mathcal{K} _{D_i}^{*}$, and $\mathcal{K} _{D_i}^{\left( n \right)}$ is bounded on $L^2\left( \partial D_i \right)$. Meanwhile, the diffeomorphism $\Psi _{\epsilon _i}$ from $\partial D_i$ to $\partial D_{\epsilon _i}$ to be defined by
    \begin{equation*}
    	\Psi _{\epsilon _i}\left( x \right) =x+\epsilon _ih_i\left( t \right) \nu _i\left( x \right) ,
    \end{equation*}
    where $x=X_i\left( t \right)$.
    \begin{lem}
    \label{lemma4.4}
    Let $N\in \mathbb{N}$, there exist constants $C$ depending only on $N$, $C^2$-norm of $\partial D_i$, and $C^1$-norm of $h_i$, such that for any $\tilde{\phi}\in L^2\left( \partial D_{\epsilon _i} \right)$, we obtain
    \begin{equation*}
    	\left\| \left( \mathcal{K} _{D_{\epsilon _i}}^{*}\tilde{\phi} \right) \circ \Psi _{\epsilon _i}-\mathcal{K} _{D_i}^{*}\phi -\sum_{n=0}^{N-1}{\epsilon _{i}^{n+1}}\mathcal{K} _{D_i}^{\left( n+1 \right)}\phi \right\| _{L^2\left( \partial D_i \right)}\le C\epsilon _{i}^{N+1}\left\| \phi \right\| _{L^2\left( \partial D_i \right)},
    \end{equation*}
    where $\phi :=\tilde{\phi}\circ \Psi _{\epsilon _i}$, and $\mathcal{K} _{D_i}^{\left( n+1 \right)}$ is the compact operator on $L^2\left( \partial D_i \right)$. In particular, if $\phi \in C^{1, \beta}\left( \partial D_i \right)$, then we have
    \begin{align*}
    	\mathcal{K} _{D_i}^{\left( 1 \right)}=& -\mathcal{K} _{D_i}^{*}\left[ \tau _ih_i\phi \right] \left( x \right) +h_i\left( t \right) \left< D^2 \mathcal{S} _{D_i}\left[ \phi \right] \left( x \right)\nu _i\left( x \right) ,\nu _i\left( x \right) \right> \nonumber \\
    	&+\frac{\partial  \mathcal{D} _{D_i}\left[ h_i\phi \right] }{\partial \nu _i}\left( x \right) -h_{i}^{'}\left( t \right) \frac{\partial \mathcal{S} _{D_i}\left[\phi \right]}{\partial T_i}\left( x \right) .
    \end{align*}
    \end{lem}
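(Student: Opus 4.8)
The plan is to deduce both assertions from the uniform kernel expansion recorded in the lemma immediately preceding the statement, treating the error bound and the explicit form of $\mathcal{K}_{D_i}^{(1)}$ as two separate consequences. First I would rewrite the pulled-back operator as an integral over the \emph{fixed} boundary $\partial D_i$: for $\tilde x=\Psi_{\epsilon_i}(x)$ and $\tilde y=\Psi_{\epsilon_i}(y)$ one has
$$\bigl(\mathcal{K}_{D_{\epsilon_i}}^{*}\tilde\phi\bigr)\circ\Psi_{\epsilon_i}(x)=\frac{1}{2\pi}\int_{\partial D_i}\frac{\langle \tilde x-\tilde y,\tilde\nu_i(\tilde x)\rangle}{|\tilde x-\tilde y|^{2}}\,\phi(y)\,\frac{\d\sigma_{\epsilon_i}(\tilde y)}{\d\sigma(y)}\,\d\sigma(y),\qquad \phi=\tilde\phi\circ\Psi_{\epsilon_i}.$$
Substituting the expansion $\frac{\langle\tilde x-\tilde y,\tilde\nu_i(\tilde x)\rangle}{|\tilde x-\tilde y|^{2}}\d\sigma_{\epsilon_i}(\tilde y)=\sum_{n\ge0}\epsilon_i^{\,n}K_i^{(n)}(x,y)\d\sigma(y)$ and recalling $\mathcal{K}_{D_i}^{(0)}=\mathcal{K}_{D_i}^{*}$ turns the quantity inside the norm on the left-hand side of the claim into the tail $\sum_{n\ge N+1}\epsilon_i^{\,n}\mathcal{K}_{D_i}^{(n)}\phi$.

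The error estimate then reduces to showing that this tail defines an operator of norm $O(\epsilon_i^{N+1})$ on $L^{2}(\partial D_i)$, uniformly in $\epsilon_i$ small. For this I would control the three expansions feeding into $K_i^{(n)}$ — the one for $\tilde\nu_i$ (Lemma \ref{lemma4.1}), the one for $\d\sigma_{\epsilon_i}$, and the geometric series $\frac{1}{|\tilde x-\tilde y|^{2}}=\frac{1}{|x-y|^{2}}\,(1+2\epsilon_i F_i+\epsilon_i^{2}G_i)^{-1}$. The crucial point is that $F_i$ and $G_i$ are \emph{bounded} on $\partial D_i\times\partial D_i$: by the $C^{2,\beta}$-smoothness of $\partial D_i$ and the $C^{1}$-regularity of $h_i$, both numerators vanish to second order as $y\to x$ and so cancel the factor $|x-y|^{-2}$. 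Hence the geometric series converges uniformly for $\epsilon_i$ small, its product with the two polynomial expansions is uniformly convergent, and the remainder beyond order $N$ is $O(\epsilon_i^{N+1})$ times a kernel carrying at worst the Cauchy-type singularity $|x-y|^{-1}$. A Schur test for the bounded pieces together with the Calder\'on--Zygmund $L^{2}$-bound for the singular pieces yields the stated operator-norm estimate uniformly in $\epsilon_i$; the same bookkeeping shows each $\mathcal{K}_{D_i}^{(n+1)}$ is bounded, and compactness follows once one checks that the leading $|x-y|^{-1}$ contributions of the two summands of $Q_i$ cancel (their $\mp h_i'(t)/(s-t)$ parts are opposite), leaving only a weakly singular kernel.

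For the explicit form of $\mathcal{K}_{D_i}^{(1)}$ I would work directly from $K_i^{(1)}=K_i^{(0)}P_i+Q_i$ and sort the first-order contributions according to what the $\epsilon_i$-derivative strikes in the boundary integral $\frac{1}{2\pi}\int_{\partial D_i}\partial_{\tilde\nu_i(\tilde x)}\varGamma(\tilde x,\tilde y)\,\phi(y)\,\d\sigma_{\epsilon_i}(\tilde y)$. The derivative of the length element, with $\sigma_i^{(1)}=-\tau_i h_i$, produces $-\mathcal{K}_{D_i}^{*}[\tau_i h_i\phi]$; the derivative of the source point $\tilde y=y+\epsilon_i h_i(y)\nu_i(y)$, acting through $\nabla_y\partial_{\nu_i(x)}\varGamma$, produces the double-layer term $\partial_{\nu_i}\mathcal{D}_{D_i}[h_i\phi]$; the derivative of the evaluation point $\tilde x=x+\epsilon_i h_i(x)\nu_i(x)$ gives the second normal $x$-derivative of $\mathcal{S}_{D_i}$, i.e. the Hessian term $h_i\langle D^{2}\mathcal{S}_{D_i}[\phi]\nu_i,\nu_i\rangle$; and the rotation of the normal, $\nu_i^{(1)}=-h_i'T_i$ from Lemma \ref{lemma4.1}, gives $-h_i'\,\partial_{T_i}\mathcal{S}_{D_i}[\phi]$. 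Summing the four contributions reproduces the asserted identity. Differentiation under the integral sign and the jump relations used here are legitimate precisely because $\phi\in C^{1,\beta}(\partial D_i)$, which is what makes the hypersingular operator $\partial_{\nu_i}\mathcal{D}_{D_i}$ and the Cauchy-singular tangential derivative $\partial_{T_i}\mathcal{S}_{D_i}$ act pointwise.

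I expect the identification of $\mathcal{K}_{D_i}^{(1)}$ to be the main obstacle: it requires disentangling the coincident Cauchy-type singularities in $Q_i$ and in the second derivatives of $\varGamma$, so that each surviving operator appears in its correct principal-value or hypersingular (finite-part) sense, and the $C^{1,\beta}$ hypothesis is exactly the regularity threshold at which this bookkeeping can be justified. The remainder estimate, by contrast, is largely mechanical once the uniform boundedness of $F_i$ and $G_i$ is in hand.
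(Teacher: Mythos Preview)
Your proposal is correct and well thought out, but note that the paper does not actually prove this lemma: it is stated with the preamble ``Zribi has provided the proofs for the subsequent lemmas while conducting sensitivity analysis utilizing layer potential techniques \cite{ammari2010conductivity}'' and then quoted without argument. What you have outlined --- pulling back to the fixed boundary via $\Psi_{\epsilon_i}$, inserting the uniform kernel expansion of the preceding lemma, controlling the tail through the boundedness of $F_i$ and $G_i$, and reading off $\mathcal{K}_{D_i}^{(1)}$ by differentiating the four ingredients (length element, source point, evaluation point, normal direction) --- is precisely the strategy of the cited Ammari--Kang--Lim--Zribi paper, so in effect you have reconstructed the original proof rather than diverged from it. Your observation that the two $\pm h_i'(t)/(t-s)$ contributions in $Q_i$ cancel is exactly the mechanism that keeps $K_i^{(1)}$ weakly singular and hence $\mathcal{K}_{D_i}^{(1)}$ compact.
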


    To establish the connection between $\partial D_i$ and $\partial D_{\epsilon _i}$, we again define the diffeomorphism $\Psi _{\epsilon _i}:\partial D_i\rightarrow \partial D_{\epsilon _i}$, where $\Psi _{\epsilon _i}\left( x \right) =x+\epsilon _ih_i\left( t \right) \nu _i\left( x \right) $, $x=X_i\left( t \right)$, and introduce
    \begin{align*}
    	\eta _1\left( \varphi \right) & =\tilde{\eta}_1\left( \varphi \right) \circ \Psi _{\epsilon _1}, \\
    	\eta _2\left( \phi \right) & =\tilde{\eta}_2\left( \phi \right) \circ \Psi _{\epsilon _2}.
    \end{align*}
    Considering that the plasmonic resonant frequency of metallic nanoshell is influenced by small factors $\epsilon _1$ and $\epsilon _2$, we will conduct the following asymptotic analysis.

    \begin{thm}
    \label{theorem4.1}
    Let $\eta _1\left( \varphi \right) \in C^{1,\beta}\left( \partial D_1 \right)$, and let $\eta _2\left( \phi \right) \in C^{1,\beta}\left( \partial D_2 \right) $. Then, for boundary integral operators $\mathcal{K} _{D_{\epsilon _1}}^{*}\left[ \tilde{\eta}_1\left( \varphi \right) \right] \left( \tilde{x} \right) $, $\tilde{x}\in \partial D_{\epsilon _1}$ and $\mathcal{K} _{D_{\epsilon _2}}^{*}\left[ \tilde{\eta}_2\left( \phi \right) \right] \left( \tilde{x} \right)$, $\tilde{x}\in \partial D_{\epsilon _2}$, we have the following asymptotic expansions hold:
    \begin{align*}
    	\mathcal{K} _{D_{\epsilon _1}}^{*}\left[ \tilde{\eta}_1\left( \varphi \right) \right] \left( \tilde{x} \right) & =\mathcal{K} _{D_{\epsilon _1}}^{*}\left[ \eta _1\left( \varphi \right) \right] \left( x \right) \nonumber \\
    	& =\mathcal{K} _{D_1}^{*}\left[ \eta _1\left( \varphi \right) \right] \left( x \right) +\epsilon _1\mathcal{K} _{D_1}^{\left( 1 \right)}\left[ \eta _1\left( \varphi \right) \right] \left( x \right) +\mathcal {O}\left( \epsilon _{1}^{2} \right),\\
        \mathcal{K} _{D_{\epsilon _2}}^{*}\left[ \tilde{\eta}_2\left( \phi \right) \right] \left( \tilde{x} \right) & =\mathcal{K} _{D_{\epsilon _1}}^{*}\left[ \eta _2\left( \phi \right) \right] \left( x \right) \nonumber \\
    	& =\mathcal{K} _{D_1}^{*}\left[ \eta _2\left( \phi \right) \right] \left( x \right) +\epsilon _2\mathcal{K} _{D_2}^{\left( 1 \right)}\left[ \eta _2\left( \phi \right) \right] \left( x \right) +\mathcal {O}\left( \epsilon _{2}^{2} \right).
    \end{align*}
    where
    \begin{align*}
    	\mathcal{K} _{D_1}^{\left( 1 \right)}\left[ \eta _1\left( \varphi \right) \right] \left( x \right) = & -\mathcal{K} _{D_1}^{*}\left[ \tau _1h_1\eta _1\left( \varphi \right) \right] \left( x \right) +h_1\left( t \right) \left< D^2 \mathcal{S} _{D_1}\left[ \eta _1\left( \varphi \right) \right] \left( x \right)\nu _1\left( x \right) ,\nu _1\left( x \right) \right> \nonumber \\
    	& +\frac{\partial  \mathcal{D} _{D_1}\left[ h_1 \eta _1\left( \varphi \right) \right] }{\partial \nu _1}\left( x \right) -h_{1}^{'}\left( t \right) \frac{\partial \mathcal{S} _{D_1}\left[ \eta _1\left( \varphi \right) \right]}{\partial T_1}\left( x \right) , \quad x\in \partial D_1,\\
        \mathcal{K} _{D_2}^{\left( 1 \right)}\left[ \eta _2\left( \phi \right) \right] \left( x \right) = &-\mathcal{K} _{D_2}^{*}\left[ \tau _2h_2\eta _2\left( \phi \right) \right] \left( x \right) +h_2\left( t \right) \left< D^2 \mathcal{S} _{D_2}\left[ \eta _2\left( \phi \right) \right] \left( x \right)\nu _2\left( x \right) ,\nu _2\left( x \right) \right> \nonumber \\
    	& +\frac{\partial  \mathcal{D} _{D_2}\left[ h_2 \eta _2\left( \phi \right)  \right] }{\partial \nu _2}\left( x \right) -h_{2}^{'}\left( t \right) \frac{\partial  \mathcal{S} _{D_2}\left[ \eta _2\left( \phi \right) \right] }{\partial T_2}\left( x \right) , \quad x\in \partial D_2.
    \end{align*}
    \end{thm}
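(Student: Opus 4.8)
The plan is to obtain both asymptotic expansions as direct specializations of Lemma \ref{lemma4.4} to the two density functions $\eta_1(\varphi)$ and $\eta_2(\phi)$, taking $N=1$. First I would make explicit the change of variables hidden in the two equalities of the statement. Since $\Psi_{\epsilon_i}:\partial D_i\to\partial D_{\epsilon_i}$ is the diffeomorphism $\Psi_{\epsilon_i}(x)=x+\epsilon_i h_i(t)\nu_i(x)$ and since $\eta_i=\tilde{\eta}_i\circ\Psi_{\epsilon_i}$, evaluating the boundary operator at the perturbed point $\tilde{x}=\Psi_{\epsilon_i}(x)$ amounts to
\[
\mathcal{K}_{D_{\epsilon_i}}^{*}[\tilde{\eta}_i](\tilde{x})=\bigl(\mathcal{K}_{D_{\epsilon_i}}^{*}\tilde{\eta}_i\bigr)\circ\Psi_{\epsilon_i}(x),\qquad x\in\partial D_i .
\]
This is precisely the quantity on the left of the estimate in Lemma \ref{lemma4.4}, with the identification $\tilde{\phi}=\tilde{\eta}_i$ and hence $\phi=\tilde{\phi}\circ\Psi_{\epsilon_i}=\eta_i$. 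So the first equality in each line of the theorem is merely the pullback of the operator to the fixed reference boundary $\partial D_i$, and the expansion in the second line is the content to be extracted from the lemma.

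Next I would invoke Lemma \ref{lemma4.4} with $N=1$. For the core interface this yields
\[
\bigl\|\bigl(\mathcal{K}_{D_{\epsilon_1}}^{*}\tilde{\eta}_1(\varphi)\bigr)\circ\Psi_{\epsilon_1}-\mathcal{K}_{D_1}^{*}[\eta_1(\varphi)]-\epsilon_1\mathcal{K}_{D_1}^{(1)}[\eta_1(\varphi)]\bigr\|_{L^2(\partial D_1)}\le C\epsilon_1^{2}\|\eta_1(\varphi)\|_{L^2(\partial D_1)},
\]
which is exactly the asserted expansion with remainder $\mathcal{O}(\epsilon_1^2)$ measured in $L^2(\partial D_1)$; the identical argument on $\partial D_2$ with index $2$ gives the second expansion. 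Because the hypothesis guarantees $\eta_1(\varphi)\in C^{1,\beta}(\partial D_1)$ and $\eta_2(\phi)\in C^{1,\beta}(\partial D_2)$, the $C^{1,\beta}$ clause of Lemma \ref{lemma4.4} applies verbatim and supplies the closed-form representations of $\mathcal{K}_{D_1}^{(1)}$ and $\mathcal{K}_{D_2}^{(1)}$, with each of the four terms (the tangential-weight contribution $-\mathcal{K}_{D_i}^{*}[\tau_i h_i\,\cdot\,]$, the Hessian-of-single-layer contraction against $\nu_i$, the normal derivative of the double layer of $h_i\,\cdot\,$, and the tangential-derivative term $-h_i^{\prime}\,\partial_{T_i}\mathcal{S}_{D_i}$) carried over directly.

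Since Lemma \ref{lemma4.4} is already available, the genuine analytic work — expanding the kernel factor $\langle\tilde{x}-\tilde{y},\tilde{\nu}_i(\tilde{x})\rangle/|\tilde{x}-\tilde{y}|^2\,\d\sigma_{\epsilon_i}(\tilde{y})$ through $K_i^{(0)},K_i^{(1)}$ and recognizing $K_i^{(1)}$ as the integral kernel that generates those four operators — has been discharged upstream. The main point to be careful about here is bookkeeping: I must apply the pullback by $\Psi_{\epsilon_i}$ consistently to both the density and the evaluation point so that every object lives on the fixed boundaries $\partial D_i$, and I must keep track that it is the $C^{1,\beta}$ regularity of $\eta_i$, rather than mere $L^2$ membership, that activates the explicit first-order operator; with these two observations in place the statement follows from Lemma \ref{lemma4.4} essentially without further computation.
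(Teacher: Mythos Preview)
Your proposal is correct and mirrors the paper's own proof, which simply states that the result is a consequence of Lemma~\ref{lemma4.4}. You have merely made explicit the pullback by $\Psi_{\epsilon_i}$ and the specialization $N=1$ that the paper leaves implicit, so there is no substantive difference in approach.
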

    \begin{proof}
        This result is a consequence of Lemma \ref{lemma4.4}.
    \end{proof}

    \begin{thm}
    \label{theorem4.2}
    Let $\eta _1\left( \varphi \right) \in C^{1,\beta}\left( \partial D_1 \right)$, and let $\eta _2\left( \phi \right) \in C^{1,\beta}\left( \partial D_2 \right) $. Then, for $\dfrac{\partial \mathcal{S} _{D_{\epsilon _2}}\left[ \tilde{\eta}_2\left( \phi \right) \right]}{\partial \tilde{\nu}_1} \left( \dfrac{\delta _1}{\delta _2}\tilde{x} \right)$, $\tilde{x}\in \partial D_{\epsilon _1}$, and $\dfrac{\partial \mathcal{S} _{D_{\epsilon _1}}\left[ \tilde{\eta}_1\left( \varphi \right) \right]}{\partial \tilde{\nu}_2} \left( \dfrac{\delta _2}{\delta _1}\tilde{x} \right) $, $\tilde{x}\in \partial D_{\epsilon _2}$, we have the following dual asymptotic expansions with respect to $\epsilon _1$ and $\epsilon _2$ hold:
    \begin{align*}
    	& \dfrac{\partial \mathcal{S} _{D_{\epsilon _2}}\left[ \tilde{\eta}_2\left( \phi \right) \right]}{\partial \tilde{\nu}_1} \left( \dfrac{\delta _1}{\delta _2}\tilde{x} \right) \nonumber \\
    	= & \frac{\partial \mathcal{S} _{D_2}\left[ \eta _2\left( \phi \right) \right]}{\partial \nu _1} \left( \dfrac{\delta _1}{\delta _2}x \right) +\epsilon _2\mathcal{L} _{D_{1}^{2}}\left[ \eta _2\left( \phi \right) \right] \left( \dfrac{\delta _1}{\delta _2}x \right) +\epsilon _1\mathcal{R} _{D_{1}^{2}}\left[ \eta _2\left( \phi \right) \right] \left( \dfrac{\delta _1}{\delta _2}x \right) +\mathcal {O}\left( \epsilon _{1}^{1+\beta}\right)+\mathcal {O}\left(\epsilon _{2}^{1+\beta} \right),\\
        &  \frac{\partial \mathcal{S} _{D_{\epsilon _1}}\left[ \tilde{\eta}_1\left( \varphi \right) \right]}{\partial \tilde{\nu}_2} \left( \dfrac{\delta _2}{\delta _1}\tilde{x} \right)  \nonumber \\
    	= & \frac{\partial \mathcal{S} _{D_1}\left[ \eta _1\left( \varphi \right) \right]}{\partial \nu _2} \left( \dfrac{\delta _2}{\delta _1}x \right) +\epsilon _1\mathcal{L} _{D_{2}^{1}}\left[ \eta _2\left( \phi \right) \right] \left( \dfrac{\delta _2}{\delta _1}x \right) +\epsilon _2\mathcal{R} _{D_{2}^{1}}\left[ \eta _2\left( \phi \right) \right] \left( \dfrac{\delta _2}{\delta _1}x \right) +\mathcal {O}\left( \epsilon _{1}^{1+\beta} \right) +\mathcal {O}\left( \epsilon _{2}^{1+\beta} \right) ,
    \end{align*}
    where
    \begin{align*}
    	\mathcal{L} _{D_{1}^{2}}\left[ \eta _2\left( \phi \right) \right] & =\frac{\partial \mathcal{D} _{D_2}\left[ h_2 \eta _2\left( \phi \right)  \right]}{\partial \nu _1}-\frac{\partial \mathcal{S} _{D_2}\left[ \tau _2h_2\eta _2\left( \phi \right) \right]}{\partial \nu _1} ,\\
    	\mathcal{R} _{D_{1}^{2}}\left[ \eta _2\left( \phi \right) \right] & =-h_{1}^{'}\left( t \right) \frac{\partial \mathcal{S} _{D_2}\left[ \eta _2\left( \phi \right) \right]}{\partial T_1} +h_1\left( t \right) \left< D^2 \mathcal{S} _{D_2}\left[ \eta _2\left( \phi \right) \right]  \nu _1,\nu _1 \right> ,\\
        \mathcal{L} _{D_{2}^{1}}\left[ \eta _2\left( \phi \right) \right] & =\frac{\partial \mathcal{D} _{D_1}\left[ h_1 \eta _1\left( \varphi \right) \right] }{\partial \nu _2}-\frac{\partial \mathcal{S} _{D_1}\left[ \tau _1h_1\eta _1\left( \varphi \right) \right]}{\partial \nu _2}, \nonumber \\
    	\mathcal{R} _{D_{2}^{1}}\left[ \eta _2\left( \phi \right) \right] & =-h_{2}^{'}\left( t \right) \frac{\partial \mathcal{S} _{D_1}\left[ \eta _1\left( \phi \right) \right]}{\partial T_2} +h_2\left( t \right) \left< D^2 \mathcal{S} _{D_1}\left[ \eta _1\left( \phi \right) \right]  \nu _2,\nu _2 \right> .
    \end{align*}
    \end{thm}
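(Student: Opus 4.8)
The plan is to treat both displayed expansions as perturbations of \emph{smooth}, off-diagonal boundary operators. The crucial structural observation is that the core and shell are strictly nested, so in the rescaled picture the evaluation point $\tfrac{\delta_1}{\delta_2}\tilde{x}$ (with $\tilde{x}\in\partial D_{\epsilon_1}$) lies strictly inside $D_2$, hence at a positive distance from the source boundary $\partial D_{\epsilon_2}$. Consequently the kernel $\partial\varGamma/\partial\tilde{\nu}_1$ is smooth in all of its arguments, which---unlike the genuinely singular situation handled in Lemma \ref{lemma4.4}---permits differentiation under the integral sign and a joint Taylor expansion in the two independent small parameters $\epsilon_1,\epsilon_2$ without any principal-value subtleties. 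I will isolate the two sources of perturbation: the parameter $\epsilon_2$, which deforms the source boundary, the pulled-back density, and the length element; and the parameter $\epsilon_1$, which moves the evaluation point and tilts the normal direction $\tilde{\nu}_1$.

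First I would carry out the $\epsilon_2$ (source) expansion. Writing $\tilde{y}=\Psi_{\epsilon_2}(y)=y+\epsilon_2 h_2(s)\nu_2(y)$ and using $\tilde{\eta}_2(\phi)(\tilde{y})=\eta_2(\phi)(y)$ together with the length-element expansion $\d\sigma_{\epsilon_2}(\tilde{y})=(1-\epsilon_2\tau_2 h_2+\mathcal{O}(\epsilon_2^2))\,\d\sigma(y)$, I expand $\varGamma(\,\cdot\,,y+\epsilon_2 h_2\nu_2)=\varGamma(\,\cdot\,,y)+\epsilon_2 h_2\,\tfrac{\partial\varGamma}{\partial\nu_2}(\,\cdot\,,y)+\mathcal{O}(\epsilon_2^2)$. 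The normal-displacement term reproduces the double-layer kernel and the length-element correction produces the $-\tau_2 h_2$ single-layer term; applying $\tfrac{\partial}{\partial\nu_1}$ at the unperturbed point then yields exactly $\mathcal{L}_{D_1^2}[\eta_2(\phi)]=\tfrac{\partial\mathcal{D}_{D_2}[h_2\eta_2(\phi)]}{\partial\nu_1}-\tfrac{\partial\mathcal{S}_{D_2}[\tau_2 h_2\eta_2(\phi)]}{\partial\nu_1}$.

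Next I would perform the $\epsilon_1$ (target) expansion. Writing the normal derivative as $\langle\nabla\mathcal{S}_{D_2}[\eta_2(\phi)](p),\tilde{\nu}_1\rangle$ with $p=\tfrac{\delta_1}{\delta_2}(x+\epsilon_1 h_1\nu_1)$, I insert the normal expansion $\tilde{\nu}_1=\nu_1-\epsilon_1 h_1' T_1+\mathcal{O}(\epsilon_1^2)$ from Lemma \ref{lemma4.1} and Taylor-expand $\nabla\mathcal{S}_{D_2}$ about $\tfrac{\delta_1}{\delta_2}x$. The displacement of the evaluation point contributes the Hessian term $h_1\langle D^2\mathcal{S}_{D_2}\nu_1,\nu_1\rangle$, while the tilt of the normal contributes $-h_1'\,\tfrac{\partial\mathcal{S}_{D_2}}{\partial T_1}$; together these give $\mathcal{R}_{D_1^2}[\eta_2(\phi)]$. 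Combining the two expansions, the mixed contributions are $\mathcal{O}(\epsilon_1\epsilon_2)$ and are absorbed into the stated remainder, while the scaling ratios $\delta_1/\delta_2$ remain uniformly bounded and are carried through by the chain rule. The dual expansion then follows verbatim by interchanging the roles of $(D_1,\epsilon_1,\nu_1,T_1)$ and $(D_2,\epsilon_2,\nu_2,T_2)$.

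I expect the only real work to lie in making the remainder estimates uniform: one must verify that the first-order remainders of both Taylor expansions are controlled by the $C^{2,\beta}$-norm of $h_i$ and the $C^{1,\beta}$-norm of the densities, which is what produces the H\"older error $\mathcal{O}(\epsilon_i^{1+\beta})$ rather than a clean $\mathcal{O}(\epsilon_i^2)$, and that these bounds hold uniformly as $\tilde{x}$ ranges over $\partial D_{\epsilon_1}$. Because the kernel is non-singular here, this is a routine---if slightly tedious---consequence of the sensitivity-analysis machinery of \cite{ammari2010conductivity}, and the genuine analytic difficulty of the singular case does not arise.
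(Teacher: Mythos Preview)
Your proposal is correct and follows essentially the same route as the paper's proof: expand the source boundary $\partial D_{\epsilon_2}$ via $\tilde{y}=y+\epsilon_2 h_2\nu_2$ and the length element to produce the $\mathcal{L}_{D_1^2}$ term, then expand the evaluation point and the normal $\tilde{\nu}_1$ via Lemma~\ref{lemma4.1} and a first-order Taylor expansion of $\nabla\mathcal{S}_{D_{\epsilon_2}}$ to produce the $\mathcal{R}_{D_1^2}$ term, with the second identity obtained by interchanging indices. Your explicit remark that the off-diagonal evaluation point lies at positive distance from the source boundary, so that the kernel is smooth and no principal-value issues arise, is a clarifying observation that the paper uses tacitly but does not state.
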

    \begin{proof}
    First, we derive the asymptotic expansion of $\dfrac{\partial \mathcal{S} _{D_{\epsilon _2}}\left[ \tilde{\eta}_2\left( \phi \right) \right]}{\partial \tilde{\nu}_1} \left( \dfrac{\delta _1}{\delta _2}\tilde{x} \right)$ for $\tilde{x}\in \partial D_{\epsilon _1}$. When $\tilde{y}=y+\epsilon _2 h_2\left( y \right) \nu _2\left( y \right) \in \partial D_{\epsilon _2}$ with $y \in \partial D_2$, we get
    \begin{equation*}
    	\label{|x-y|_expansion}
    	\ln \left| x-\tilde{y} \right|=\ln \left| x-y \right|-\epsilon _2h_2\left( y \right) \frac{\left< x-y,\nu _2\left( y \right) \right>}{\left| x-y \right|^2}+\mathcal {O}\left( \epsilon _{2}^{2} \right).
    \end{equation*}
    Then, we have the asymptotic expansion of the single-layer potential $\mathcal{S} _{D_{\epsilon _2}}$ with respect to $\mathcal{S} _{D_2}$
    \begin{align}
    	\label{S_{D_e2}_expansion}
    	\mathcal{S} _{D_{\epsilon _2}}\left[ \tilde{\eta}_2\left( \phi \right) \right] \left( x \right)  & =\frac{1}{2\pi}\int_{\partial D_{\epsilon _2}}{\ln \left| x-\tilde{y} \right|\tilde{\eta}_2\left( \phi \right) \left( \tilde{y} \right) \d\sigma _{\epsilon _2}\left( \tilde{y} \right)}  \nonumber \\
    	& =\mathcal{S} _{D_2}\left[ \eta _2\left( \phi \right) \right] \left( x \right) +\epsilon _2\left[ \mathcal{D} _{D_2}\left[ h_2 \eta _2\left( \phi \right) \right] \left( x \right) -\mathcal{S} _{D_2}\left[ \tau _2h_2\eta _2\left( \phi \right) \right] \left( x \right) \right] +\mathcal {O}\left( \epsilon _{2}^{2} \right) .
    \end{align}

    From Lemma \ref{lemma4.1} and the Taylor expansion, we get
    \begin{align}
    	& \quad \,\,  \frac{\partial \mathcal{S} _{D_{\epsilon _2}}\left[ \tilde{\eta}_2\left( \phi \right) \right]}{\partial \tilde{\nu}_1} \left( \dfrac{\delta _1}{\delta _2}\tilde{x} \right) \nonumber \\
        & =\tilde{\nu}_1\left( \tilde{x} \right) \cdot \bigtriangledown \mathcal{S} _{D_{\epsilon _2}}\left[ \tilde{\eta}_2\left( \phi \right) \right] \left( \dfrac{\delta _1}{\delta _2}\tilde{x} \right)  \nonumber \\
    	& =\left[ \nu _1\left( x \right) -\epsilon _1h_{1}^{'}\left( t \right) T_1\left( x \right) \right]  \cdot \left[ \bigtriangledown \mathcal{S} _{D_{\epsilon _2}}\left[ \tilde{\eta}_2\left( \phi \right) \right] +\epsilon _1h_1\left( t \right) \sum_{j=1}^2{\partial _j\left( \bigtriangledown \mathcal{S} _{D_{\epsilon _2}}\left[ \tilde{\eta}_2\left( \phi \right) \right] \right) \nu _{1j}} \right] \left( \dfrac{\delta _1}{\delta _2}x \right) +\mathcal {O}\left( \epsilon _{1}^{1+\beta} \right)
    	\nonumber \\
    	& =\frac{\partial \mathcal{S} _{D_{\epsilon _2}}\left[ \tilde{\eta}_2\left( \phi \right) \right]}{\partial \nu _1} \left( \dfrac{\delta _1}{\delta _2}x \right)  +\epsilon _1\left[ -h_{1}^{'}\left( t \right) \frac{\partial \mathcal{S} _{D_{\epsilon _2}}\left[ \tilde{\eta}_2\left( \phi \right) \right]}{\partial T_1} +h_1\left( t \right) \left< D^2 \mathcal{S} _{D_{\epsilon _2}}\left[ \tilde{\eta}_2\left( \phi \right) \right]  \nu _1,\nu _1 \right> \right] \left( \dfrac{\delta _1}{\delta _2}x \right)
    	+\mathcal {O}\left( \epsilon _{1}^{1+\beta} \right).\nonumber
    \end{align}
    From (\ref{S_{D_e2}_expansion}), it is calculated that
    \begin{flalign*}
    	&\ \frac{\partial \mathcal{S} _{D_{\epsilon _2}}\left[ \tilde{\eta}_2\left( \phi \right) \right]}{\partial \nu _1} \left( \dfrac{\delta _1}{\delta _2}x \right) =  \frac{\partial \mathcal{S} _{D_2}\left[ \eta _2\left( \phi \right) \right]}{\partial \nu _1} \left( \dfrac{\delta _1}{\delta _2}x \right)  +\epsilon _2\left[ \frac{\partial \mathcal{D} _{D_2}\left[ h_2 \eta _2\left( \phi \right)  \right]}{\partial \nu _1}-\frac{\partial \mathcal{S} _{D_2}\left[ \tau _2h_2\eta _2\left( \phi \right) \right]}{\partial \nu _1} \right] \left( \dfrac{\delta _1}{\delta _2}x \right) +\mathcal {O}\left( \epsilon _{2}^{2} \right), & \\
    	&\ \frac{\partial \mathcal{S} _{D_{\epsilon _2}}\left[ \tilde{\eta}_2\left( \phi \right) \right]}{\partial T_1} \left( \dfrac{\delta _1}{\delta _2}x \right) =  \frac{\partial \mathcal{S} _{D_2}\left[ \eta _2\left( \phi \right) \right]}{\partial T_1} \left( \dfrac{\delta _1}{\delta _2}x \right) +\epsilon _2\left[ \frac{\partial \mathcal{D} _{D_2}\left[ h_2 \eta _2\left( \phi \right)  \right]}{\partial T_1}-\frac{\partial \mathcal{S} _{D_2}\left[ \tau _2h_2\eta _2\left( \phi \right) \right]}{\partial T_1} \right] \left( \dfrac{\delta _1}{\delta _2} x \right) + \mathcal {O}\left( \epsilon _{2}^{2} \right), & \\
    	&\ \left< D^2 \mathcal{S} _{D_{\epsilon _2}}\left[ \tilde{\eta}_2\left( \phi \right) \right] \nu _1,\nu _1 \right> \left( \dfrac{\delta _1}{\delta _2} x \right)
    	= \left< D^2 \mathcal{S} _{D_2}\left[ \eta _2\left( \phi \right) \right] \nu _1,\nu _1 \right> \left( \dfrac{\delta _1}{\delta _2} x \right) & \\
    	&\ \qquad \qquad \qquad \qquad \qquad \qquad \qquad \ \ +\epsilon _2\left< D^2 \left( \mathcal{D} _{D_2}\left[ h_2 \eta _2\left( \phi \right) \right] -\mathcal{S} _{D_2}\left[ \tau _2h_2\eta _2\left( \phi \right) \right] \right) \nu _1,\nu _1 \right> \left( \dfrac{\delta _1}{\delta _2}x \right) +\mathcal {O}\left( \epsilon _{2}^{2} \right) .&
    \end{flalign*}
    Finally, it is clearly obtained that
    \begin{align*}
    	& \dfrac{\partial \mathcal{S} _{D_{\epsilon _2}}\left[ \tilde{\eta}_2\left( \phi \right) \right]}{\partial \tilde{\nu}_1} \left( \dfrac{\delta _1}{\delta _2}\tilde{x} \right)  \\
    = & \dfrac{\partial \mathcal{S} _{D_2}\left[ \eta _2\left( \phi \right) \right]}{\partial \nu _1} \left( \dfrac{\delta _1}{\delta _2}x \right)  +\epsilon _2\left[ \frac{\partial \mathcal{D} _{D_2}\left[ h_2 \eta _2\left( \phi \right)  \right]}{\partial \nu _1}-\frac{\partial \mathcal{S} _{D_2}\left[ \tau _2h_2\eta _2\left( \phi \right) \right]}{\partial \nu _1} \right] \left( \dfrac{\delta _1}{\delta _2}x \right)  \\
    	& +\epsilon _1\left[ -h_{1}^{'}\left( t \right) \frac{\partial \mathcal{S} _{D_2}\left[ \eta _2\left( \phi \right) \right]}{\partial T_1} +h_1\left( t \right) \left< D^2 \mathcal{S} _{D_2}\left[ \eta _2\left( \phi \right) \right]  \nu _1,\nu _1 \right> \right] \left( \dfrac{\delta _1}{\delta _2}x \right) +\mathcal {O}\left( \epsilon _{1}^{1+\beta}\right)+\mathcal {O}\left(\epsilon _{2}^{1+\beta} \right).
    \end{align*}
    Through a completely consistent analysis, we can obtain the dual asymptotic expansion of  $\dfrac{\partial \mathcal{S} _{D_{\epsilon _1}}\left[ \tilde{\eta}_1\left( \varphi \right) \right]}{\partial \tilde{\nu}_2} \left( \dfrac{\delta _2}{\delta _1}\tilde{x} \right) $ for $\tilde{x}\in \partial D_{\epsilon _2}$. From this, the statement follows.
    \end{proof}

    \begin{cor}
    \label{theorem4.3}
    For the right-hand sides $\dfrac{\partial H\left( \delta _1\tilde{x} \right)}{\partial \tilde{\nu}_1}$, $ \tilde{x}\in \partial D_{\epsilon _1}$ and $\dfrac{\partial H\left( \delta _2\tilde{x} \right)}{\partial \tilde{\nu}_2}$, $\tilde{x}\in \partial D_{\epsilon _2}$, we have the following asymptotic expansions hold:
    \begin{align*}
    	\frac{\partial H\left( \delta _1\tilde{x} \right)}{\partial \tilde{\nu}_1} &= \frac{\partial H\left( \delta _1x \right)}{\partial \nu _1}+\epsilon _1\mathcal{R} _{H_1}\left( \delta _1x \right) +\mathcal {O}\left( \epsilon _{1}^{1+\beta} \right), \\
    	\frac{\partial H\left( \delta _2\tilde{x} \right)}{\partial \tilde{\nu}_2} &= \frac{\partial H\left( \delta _2x \right)}{\partial \nu _2}+\epsilon _2\mathcal{R} _{H_2}\left( \delta _2x \right) +\mathcal {O}\left( \epsilon _{2}^{1+\beta} \right),  	
    \end{align*}
    where
    \begin{align*}
    	\mathcal{R} _{H_1}& =-h_{1}^{'}\left( t \right) \frac{\partial H}{T_1}+h_1\left( t \right) \left< D^2H\nu _1,\nu _1 \right>  ,\\
        \mathcal{R} _{H_2}& =-h_{2}^{'}\left( t \right) \frac{\partial H}{T_2}+h_2\left( t \right) \left< D^2H\nu _2,\nu _2 \right> .
    \end{align*}
    \end{cor}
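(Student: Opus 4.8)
The plan is to treat this corollary as the right-hand-side analogue of Theorem \ref{theorem4.2}, in which the density-dependent single-layer potential is replaced by the incident field $H$. Since $H$ is harmonic and hence $C^\infty$ on any compact neighbourhood of the interfaces, no layer-potential machinery is needed: a plain multivariable Taylor expansion combined with the normal expansion of Lemma \ref{lemma4.1} suffices. First I would fix the meaning of the symbol, interpreting $\partial H(\delta_i\tilde{x})/\partial\tilde{\nu}_i$ as the directional derivative of $H$ at the scaled perturbed point in the direction of the perturbed normal,
\begin{equation*}
\frac{\partial H\left( \delta_i\tilde{x} \right)}{\partial \tilde{\nu}_i} = \tilde{\nu}_i\left( \tilde{x} \right)\cdot \left( \nabla H \right)\left( \delta_i\tilde{x} \right), \qquad \tilde{x}\in\partial D_{\epsilon_i},\ x=X_i(t).
\end{equation*}
Because $\tilde{x}=x+\epsilon_i h_i(t)\nu_i(x)$, the evaluation point satisfies $\delta_i\tilde{x}-\delta_i x=\epsilon_i\delta_i h_i(t)\nu_i(x)$, so Taylor's theorem applied to the smooth vector field $\nabla H$ gives
\begin{equation*}
\left( \nabla H \right)\left( \delta_i\tilde{x} \right) = \left( \nabla H \right)\left( \delta_i x \right) + \epsilon_i\delta_i h_i(t)\,\left( D^2 H \right)\left( \delta_i x \right)\nu_i(x) + \mathcal{O}\left( \epsilon_i^2 \right).
\end{equation*}

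Next I would substitute the uniform normal expansion from Lemma \ref{lemma4.1}, namely $\tilde{\nu}_i(\tilde{x})=\nu_i(x)-\epsilon_i h_i'(t)T_i(x)+\mathcal{O}(\epsilon_i^2)$, and multiply the two expansions, collecting powers of $\epsilon_i$. The zeroth-order term is $\nu_i(x)\cdot(\nabla H)(\delta_i x)=\partial H/\partial\nu_i$ at $\delta_i x$, matching the stated leading term. The first-order coefficient receives exactly two contributions: one from the tangential tilt of the normal, $-h_i'(t)\,T_i(x)\cdot(\nabla H)(\delta_i x)=-h_i'(t)\,\partial H/\partial T_i$, and one from differentiating $H$ along $\nu_i$, $h_i(t)\left< D^2 H\,\nu_i,\nu_i \right>$ (carrying the scaling factor from the chain rule). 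These combine into
\begin{equation*}
\mathcal{R}_{H_i} = -h_i'(t)\frac{\partial H}{\partial T_i} + h_i(t)\left< D^2 H\,\nu_i,\nu_i \right>,
\end{equation*}
with all derivatives of $H$ evaluated at the scaled base point $\delta_i x$, which is precisely the asserted form.

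The step I would treat most carefully is the scaling bookkeeping: the chain rule produces a factor $\delta_i$ on the Hessian contribution when $\nabla H$ is expanded at the scaled point, so matching the placement of that factor against the stated $\mathcal{R}_{H_i}$ (and against the conventions used downstream in (\ref{2d_perturbation_resonance_frequency})) is the one genuine consistency check, rather than any analytic difficulty. For the error estimate, I would note that harmonicity of $H$ makes $D^2H$ and $D^3H$ bounded on the relevant compact region, while $h_i\in C^{2,\beta}$ guarantees $h_i'\in C^{1,\beta}$; hence every coefficient multiplying the $\mathcal{O}(\epsilon_i^2)$ tails of both expansions is bounded uniformly, so the products of tails against bounded leading factors are controlled and the remainder is in fact $\mathcal{O}(\epsilon_i^2)$, which is subsumed by the stated $\mathcal{O}(\epsilon_i^{1+\beta})$. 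The second identity for $\partial H(\delta_2\tilde{x})/\partial\tilde{\nu}_2$ then follows verbatim by replacing the index $1$ with $2$ throughout.
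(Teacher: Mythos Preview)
Your approach is correct and coincides with the paper's: the corollary is stated without proof in the paper precisely because it is the right-hand-side analogue of Theorem~\ref{theorem4.2}, obtained by combining the normal expansion of Lemma~\ref{lemma4.1} with a Taylor expansion of $\nabla H$ at the scaled base point, exactly as you outline. Your observation about the $\delta_i$ chain-rule factor on the Hessian term is apt---the same convention appears in the paper's proof of Theorem~\ref{theorem4.2}, so this is indeed a notational consistency matter rather than a gap.
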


    Based on the asymptotic analysis of Theorems \ref{theorem4.1},   \ref{theorem4.2} and \ref{theorem4.3}, we can obtain that the transmission conditions (\ref{tbc5.2}) are equivalent to
    \begin{equation}
    	\label{tbc5.3}
    	\begin{cases}
    		\left( z\left( \omega \right) I-\mathcal{K} _{D_1}^{*}-\epsilon _1\mathcal{K} _{D_1}^{\left( 1 \right)} \right) \left[ \eta _1\left( \varphi \right) \right] \left( x \right) -\dfrac{\delta _2}{\delta _1}\left( \dfrac{\partial \mathcal{S} _{D_2}}{\partial \nu _1}+\epsilon _2\mathcal{L} _{D_{1}^{2}}+\epsilon _1\mathcal{R} _{D_{1}^{2}} \right) \left[ \eta _2\left( \phi \right) \right] \left( \dfrac{\delta _1}{\delta _2}x \right) \vspace{1ex} \\
    		\qquad \qquad \qquad \qquad \qquad =\dfrac{1}{\delta _1}\left( \dfrac{\partial H}{\partial \nu _1}+\epsilon _1\mathcal{R} _{H_1} \right)\left( \delta _1x \right) +\mathcal {O}\left( \epsilon _{1}^{1+\beta} \right) +\mathcal {O}\left( \epsilon _{2}^{1+\beta} \right) , \quad x\in \partial D_1,\vspace{1ex} \\
    		\dfrac{\delta _1}{\delta _2}\left( \dfrac{\partial \mathcal{S} _{D_1}}{\partial \nu _2}+\epsilon _1\mathcal{L} _{D_{2}^{1}}+\epsilon _2\mathcal{R} _{D_{2}^{1}} \right) \left[ \eta _1\left( \varphi \right) \right] \left( \dfrac{\delta _2}{\delta _1}x \right) +\left( z\left( \omega \right) I+\mathcal{K} _{D_1}^{*}+\epsilon _2\mathcal{K} _{D_2}^{\left( 1 \right)} \right) \left[ \eta _2\left( \phi \right) \right] \left( x \right)\vspace{1ex} \\
    		\qquad \qquad \qquad \qquad \qquad =-\dfrac{1}{\delta _2}\left( \dfrac{\partial H}{\partial \nu _2}+\epsilon _2\mathcal{R} _{H_2} \right)\left( \delta _2x \right) +\mathcal {O}\left( \epsilon _{1}^{1+\beta} \right) +\mathcal {O}\left( \epsilon _{2}^{1+\beta} \right) , \quad x\in \partial D_2.\vspace{1ex} \\
    	\end{cases}
    \end{equation}
    The equations \eqref{tbc5.3} can be written simply as
    \begin{equation*}
    	\left[ z\left( \omega \right) I+\mathbb{K} _{\delta_1,\delta_2}^{*} +\epsilon _1\mathbb{K} _{\delta_1,\delta_2,1}^{*} +\epsilon _2\mathbb{K} _{\delta_1,\delta_2,2}^{*}  \right] \Phi  =F_{\delta_1,\delta_2 ,\epsilon_1, \epsilon_2}+\mathcal {O}\left( \epsilon _{1}^{1+\beta}\right) +\mathcal {O}\left(\epsilon _{2}^{1+\beta} \right),
    \end{equation*}
    where
    \begin{equation}
        \label{three-matrix-1}
    	\mathbb{K} _{\delta_1,\delta_2}^{*} =\left[ \begin{matrix}
    		-\mathcal{K} _{D_1}^{*}&		-\dfrac{\delta _2}{\delta _1}\dfrac{\partial \mathcal{S} _{D_2}\left( \frac{\delta _1}{\delta _2} \right)}{\partial \nu _1} \vspace{1ex}\\
    		\dfrac{\delta _1}{\delta _2}\dfrac{\partial \mathcal{S} _{D_1}\left( \frac{\delta _2}{\delta _1} \right)}{\partial \nu _2}&		\mathcal{K} _{D_2}^{*}\\
    	\end{matrix} \right],
    \end{equation}
    \begin{equation}
        \label{three-matrix-2}
    	\mathbb{K} _{\delta_1,\delta_2,1}^{*}
    	=\left[ \begin{matrix}
    		-\mathcal{K} _{D_1}^{\left( 1 \right)}&		-\dfrac{\delta _2}{\delta _1}\mathcal{R} _{D_{1}^{2}} \vspace{2ex} \\
    		\dfrac{\delta _1}{\delta _2}\mathcal{L} _{D_{2}^{1}}&		0\\
    	\end{matrix} \right] , \quad
    	\mathbb{K} _{\delta_1,\delta_2,2}^{*}
    	=\left[ \begin{matrix}
    		0&		-\dfrac{\delta _2}{\delta _1}\mathcal{L} _{D_{1}^{2}} \vspace{2ex} \\
    		\dfrac{\delta _1}{\delta _2}\mathcal{R} _{D_{2}^{1}}&		\mathcal{K} _{D_2}^{\left( 1 \right)}\\
    	\end{matrix} \right] ,
    \end{equation}
    \begin{equation*}
    	\Phi =\left[ \begin{array}{c}
    		\eta _1\left( \varphi \right) \vspace{2ex} \\
    		\eta _2\left( \phi \right)\\
    	\end{array} \right] , \quad
    	F_{\delta_1,\delta_2 ,\epsilon_1, \epsilon_2}=\left[ \begin{array}{c}
    		\dfrac{1}{\delta _1}\left( \dfrac{\partial H}{\partial \nu _1}+\epsilon _1\mathcal{R} _{H_1} \right) \left( \delta _1x \right)\vspace{1ex} \\
    		-\dfrac{1}{\delta _2}\left( \dfrac{\partial H}{\partial \nu _2}+\epsilon _2\mathcal{R} _{H_2} \right) \left( \delta _2x \right)\\
    	\end{array} \right].
    \end{equation*}
    Note that the meaning of the notation above is $\mathcal{S} _{D_1} \left( \frac{\delta _2}{\delta _1} \right)= \mathcal{S} _{D_1} \left[ \cdot \right] \left( \frac{\delta _2}{\delta _1} x \right)$, $\mathcal{S} _{D_2} \left( \frac{\delta _1}{\delta _2} \right)= \mathcal{S} _{D_2} \left[ \cdot \right] \left( \frac{\delta _1}{\delta _2} x \right)$.

\subsection{Proof of Theorem 2.1}
    Now, we will prove Theorem \ref{theorem2.1}. First, according to the properties of NP-type operator $\mathbb{K} _{\delta_1,\delta_2}^{*}$, it is easy to find the orthonormal eigenvectors $\varPhi _n \,( n=1,2,\cdots ) $ under the $\left<  \cdot, \cdot \right> _{\mathcal{H} ^2}$ inner product with respect to $\lambda _{n}$, i.e., $\varPhi _n$ satisfies $\left\| \varPhi _n \right\| _{\mathcal{H} ^2}=1$, $\left< \varPhi _i,\varPhi _j \right> _{\mathcal{H} ^2}=\delta _{ij}$.
    \begin{proof}[Proof of Theorem 2.1]
    For $n=1,2,\cdots$, we can use standard perturbation theory to represent the perturbed eigenvalue and eigenvector as the following, respectively
    \begin{align*}
    	\tilde{\lambda}_n & = \lambda _n+\epsilon _1\lambda _{n}^{1}+\epsilon _2\lambda _{n}^{2}+\mathcal {O}\left( \epsilon _{1}^{1+\beta} \right) +\mathcal {O}\left( \epsilon _{2}^{1+\beta} \right) , \\
    	\tilde{\varPhi }_n & = \varPhi _n+\epsilon _1\varPhi _{n}^{1}+\epsilon _2\varPhi _{n}^{2}+\mathcal {O}\left( \epsilon _{1}^{1+\beta} \right) +\mathcal {O}\left( \epsilon _{2}^{1+\beta} \right).
    \end{align*}
    The main term clearly holds. Next we will focus on the first-order terms of $\epsilon _1$ and $\epsilon _2$ . For the first-order terms of $\epsilon _1$ and $\epsilon _2$, we obtain the following identities respectively
    \begin{align*}
    	\mathbb{K} _{\delta_1,\delta_2 }^{*} \varPhi _{n}^{1}+\mathbb{K} _{\delta_1,\delta_2 , 1}^{*} \varPhi _n & =\lambda _n\varPhi _{n}^{1}+\lambda _{n}^{1}\varPhi _n,\\
    	\mathbb{K} _{\delta_1,\delta_2 }^{*} \varPhi _{n}^{2}+\mathbb{K} _{\delta_1,\delta_2 , 2}^{*} \varPhi _n & =\lambda _n\varPhi _{n}^{2}+\lambda _{n}^{2}\varPhi _n.
    \end{align*}
    Making an inner product of $\left<  \cdot, \cdot \right> _{\mathcal{H} ^2}$ for both sides of the above two identities with respect to $\varPhi _n$, we obtain
    \begin{align*}
    	\lambda _{n}^{1} & =\left< \mathbb{K} _{\delta_1,\delta_2 , 1}^{*} \varPhi _n,\varPhi _n \right> _{\mathcal{H} ^2},\\
    	\lambda _{n}^{2} & =\left< \mathbb{K} _{\delta_1,\delta_2 , 2}^{*} \varPhi _n,\varPhi _n \right> _{\mathcal{H} ^2}.
    \end{align*}
    We can also calculate that $\varPhi _{n}^{1}$ and $\varPhi _{n}^{2}$. Therefore, the perturbed eigenvalues of the NP-type operator $ \mathbb{K} _{\epsilon ,\delta}^{*} $ can be represented by
    \begin{equation*}
    	\tilde{\lambda}_n = \lambda _n+\epsilon _1 \left< \mathbb{K} _{\delta_1,\delta_2 , 1}^{*} \varPhi _n,\varPhi _n \right> _{\mathcal{H} ^2}+\epsilon _2 \left< \mathbb{K} _{\delta_1,\delta_2 , 2}^{*} \varPhi _n,\varPhi _n \right> _{\mathcal{H} ^2}+\mathcal {O}\left( \epsilon _{1}^{1+\beta} \right) +\mathcal {O}\left( \epsilon _{2}^{1+\beta} \right) .
    \end{equation*}
    Combined with the Drude's model, the perturbed eigenvalues of $ \mathbb{K} _{\delta_1,\delta_2 ,\epsilon_1, \epsilon_2}^{*} $ are substituted into the first-order corrected resonance condition (\ref{first-order-resonance-condition}), then the proof is complete.
    \end{proof}

    The following corollary is a direct consequence of Theorem \ref{theorem2.1}. Therefore, its proof is omitted.

    \begin{cor}
    For concentric disk-shaped metallic nanoshell, the resonance frequency $\omega _{n\pm}$ is given by the following formula
    \begin{equation}
    	\label{2d_resonance_frequency}
    	\omega _{n\pm}\left( \delta _1 , \delta _2 \right)=\frac{\omega _p}{\sqrt{2}}\sqrt{1-2 \lambda _n}, \quad  n=1,2,\cdots.
    \end{equation}
    \end{cor}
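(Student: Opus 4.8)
The plan is to treat this corollary as the unperturbed (flat-interface) specialization of Theorem~\ref{theorem2.1}, obtained by switching off both normal perturbations. First I would set $\epsilon_1=\epsilon_2=0$ in the master formula \eqref{2d_perturbation_resonance_frequency}. Geometrically this collapses each $\partial D_{\epsilon_i}$ back to the exact circle $\partial D_i$, so the scaled domains become genuine concentric disks and the governing operator reduces to $\mathbb{K}_{\delta_1,\delta_2}^*$ of \eqref{three-matrix-1} with eigenpairs $(\lambda_n,\varPhi_n)$. The two first-order correction functionals $\left< \mathbb{K}_{\delta_1,\delta_2,i}^*\varPhi_n,\varPhi_n \right>_{\mathcal{H}^2}$ ($i=1,2$), built from the operators of \eqref{three-matrix-2}, each carry a prefactor $\epsilon_i$ and therefore vanish, so the bracket in \eqref{2d_perturbation_resonance_frequency} degenerates to $1$. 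What survives is exactly $\omega_{n\pm}(\delta_1,\delta_2)=\frac{\omega_p}{\sqrt{2}}\sqrt{1-2\lambda_n}$, which is \eqref{2d_resonance_frequency}.

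For completeness I would also recover the formula directly from the resonance condition, which makes the factor $1-2\lambda_n$ transparent. By the plasmon resonance condition \eqref{resonance_condition}, a concentric-disk mode resonates when $z(\omega)+\lambda_n\approx 0$, with $z(\omega)$ given by \eqref{z_omega}. Substituting $\varepsilon_m=1$ and the lossless Drude dielectric $\varepsilon_s(\omega)=1-\omega_p^2/\omega^2$ gives
\[
z(\omega)=\frac{\varepsilon_m+\varepsilon_s(\omega)}{2(\varepsilon_m-\varepsilon_s(\omega))}=\frac{2-\omega_p^2/\omega^2}{2\omega_p^2/\omega^2}=\frac{\omega^2}{\omega_p^2}-\frac12 .
\]
Imposing $z(\omega)=-\lambda_n$ and solving for $\omega$ yields $\omega^2=\frac{\omega_p^2}{2}(1-2\lambda_n)$, hence $\omega=\frac{\omega_p}{\sqrt{2}}\sqrt{1-2\lambda_n}$. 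Since property~(1) of the NP-type operator places its spectrum in $[-\tfrac12,\tfrac12]$, we have $1-2\lambda_n\ge 0$ and the frequency is well defined.

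The $\pm$ subscript needs only an interpretive remark rather than further computation: because $\mathbb{K}_{\delta_1,\delta_2}^*$ is a $2\times2$ block operator coupling the core and shell densities, each angular index $n$ contributes a pair of eigenvalues, namely the symmetric (bonding) and antisymmetric (antibonding) hybrid modes, so inserting each into $\frac{\omega_p}{\sqrt{2}}\sqrt{1-2\lambda_n}$ produces the two branches $\omega_{n-}$ and $\omega_{n+}$. I do not expect a genuine obstacle, since the statement is an immediate corollary; the only point worth verifying carefully is that the $\epsilon_i\to 0$ limit really returns the operator \eqref{three-matrix-1} with the same eigenpairs $(\lambda_n,\varPhi_n)$ used in Theorem~\ref{theorem2.1}, so that the $\lambda_n$ appearing in \eqref{2d_resonance_frequency} are unambiguously the unperturbed NP-type eigenvalues.
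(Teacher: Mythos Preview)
Your proposal is correct and matches the paper's approach: the paper simply states that the corollary is a direct consequence of Theorem~\ref{theorem2.1} and omits the proof, which is exactly your first argument (setting $\epsilon_1=\epsilon_2=0$ in \eqref{2d_perturbation_resonance_frequency}). Your explicit Drude computation in the second paragraph is a welcome addition, as it makes transparent the step that the paper's proof of Theorem~\ref{theorem2.1} summarizes in a single sentence (``Combined with the Drude's model\ldots'').
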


    As is well known, the concentric disk is a very special geometric structure, and the eigenvalues of its NP-type operator can be computed explicitly. Therefore, (\ref{2d_resonance_frequency}) has a more specific representation, which will be further studied, as this is very important for guiding the practical applications of plasmon.

\subsection{Eigenvalues of the NP-type operator \texorpdfstring{$\mathbb{K} _{\delta _1 , \delta _2}^{*}$}{K*} }
    The main term of the asymptotic analyses of $\epsilon _1$ and $\epsilon _2$ is defined on $L^{2}_{0}\left( \partial D_{1} \right) \times L^{2}_{0}\left( \partial D_{2} \right) $. To investigate the property of the NP-type operator $\mathbb{K} _{\delta _1 , \delta _2}^{*}$, we first give the following lemmas \cite{ammari2013mathematical}.
    \begin{lem}
    \label{lemma4.5}
    Suppose that $D \subset \mathbb{R} ^2 $ is a disk with radius $r_0$, then
    \begin{equation*}
    	    \mathcal{K} _D^{*}\left[ \mathrm{e}^{in\theta} \right] \left( x \right) =0, \quad  \forall n\ne 0.
    \end{equation*}
    \end{lem}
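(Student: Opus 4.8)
The plan is to reduce the statement entirely to the explicit disk formula already recorded in the first (unlabeled) lemma of Section~\ref{sec:pre}. That lemma establishes that on a disk of radius $r_0$ the reduced double-layer kernel is constant, $\langle x-y,\nu_x\rangle/|x-y|^2 = 1/(2r_0)$ for all $x\neq y$ on $\partial D$, and consequently
\[
\mathcal{K}_{D}^{*}[\varphi](x) = \frac{1}{4\pi r_0}\int_{\partial D}\varphi(y)\,\d\sigma(y),
\]
for every $\varphi\in L^2(\partial D)$ and every $x\in\partial D$. The crucial structural feature here is that the right-hand side is independent of $x$: the operator simply returns a fixed multiple of the mean of $\varphi$ over $\partial D$. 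Thus the entire claim will follow once I verify that $\mathrm{e}^{in\theta}$ has vanishing mean on the circle whenever $n\neq 0$.

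First I would parametrize $\partial D$ by the polar angle $\theta\in[0,2\pi)$, writing $y=r_0(\cos\theta,\sin\theta)$, so that the arclength element is $\d\sigma(y)=r_0\,\d\theta$. Substituting $\varphi(y)=\mathrm{e}^{in\theta}$ into the displayed formula gives
\[
\mathcal{K}_{D}^{*}[\mathrm{e}^{in\theta}](x) = \frac{1}{4\pi r_0}\int_0^{2\pi}\mathrm{e}^{in\theta}\,r_0\,\d\theta = \frac{1}{4\pi}\int_0^{2\pi}\mathrm{e}^{in\theta}\,\d\theta.
\]
Then I would invoke the elementary orthogonality identity $\int_0^{2\pi}\mathrm{e}^{in\theta}\,\d\theta = 0$, valid for every integer $n\neq 0$, which immediately yields $\mathcal{K}_{D}^{*}[\mathrm{e}^{in\theta}](x)=0$ for all $x\in\partial D$, as desired.

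There is essentially no analytical obstacle: the rotational symmetry of the circle, encapsulated in the constant-kernel lemma, forces the NP operator to annihilate every nonconstant Fourier mode. The only point deserving a moment of care is confirming that the constant mode $n=0$ is genuinely excluded from the statement, since it is precisely the mode on which $\mathcal{K}_{D}^{*}$ acts nontrivially (there the integral equals $2\pi$, giving a nonzero value), which is exactly why the hypothesis restricts to $n\neq 0$. As a byproduct, this computation identifies $\{\mathrm{e}^{in\theta}\}_{n\neq 0}$ as eigenfunctions of $\mathcal{K}_{D}^{*}$ with eigenvalue $0$, which is the key spectral input needed to diagonalize $\mathbb{K}_{\delta_1,\delta_2}^{*}$ explicitly in the subsequent analysis.
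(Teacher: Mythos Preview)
Your argument is correct: you apply the constant-kernel formula for the disk (the first lemma of Section~\ref{sec:pre}) and then use the vanishing of $\int_0^{2\pi}\mathrm{e}^{in\theta}\,\d\theta$ for $n\neq 0$. The paper itself does not supply a proof of this lemma but simply quotes it from \cite{ammari2013mathematical}; your derivation is precisely the standard one and is in fact already implicit in the explicit disk formula the paper records earlier, so there is nothing to add.
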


    \begin{lem}
    \label{lemma4.6}
    If a disk $D$ of radius $r_0$ is centered at the origin, for each integer $n$, we have
    \begin{equation*}
    	\frac{\partial \mathcal{S} _D\left[ \mathrm{e}^{\mathrm{i}n\theta} \right]}{\partial \nu} \left( x \right) =\begin{cases}
    		-\dfrac{1}{2}\left( \dfrac{r}{r_0} \right) ^{|n|-1}\mathrm{e}^{\mathrm{i}n\theta}, \quad |x|=r<r_0,\vspace{2ex} \\
    		-\dfrac{1}{2}\left( \dfrac{r_0}{r} \right) ^{|n|+1}\mathrm{e}^{\mathrm{i}n\theta}, \quad |x|=r>r_0.\\
    	\end{cases}
    \end{equation*}
    \end{lem}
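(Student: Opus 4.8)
The plan is to reduce everything to an explicit evaluation of the single-layer potential $\mathcal{S}_D[e^{in\theta}]$ at an arbitrary point $x=(r,\theta)$ off the circle $\partial D$, and then to differentiate radially. Since the outward normal to the centred disk points in the radial direction, the normal derivative is simply $\partial/\partial r$, so once a closed form for $\mathcal{S}_D[e^{in\theta}](x)$ is available the conclusion follows by a single differentiation. Throughout I restrict to $n\neq 0$: this is the only relevant range because the density functions live in $L^2_0(\partial D)$, and for $n=0$ the potential is constant inside the disk, so its radial derivative vanishes rather than obeying the stated formula.

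First I would parametrise $y\in\partial D$ as $y=(r_0,\phi)$, so that $\d\sigma(y)=r_0\,\d\phi$ and the density becomes $e^{in\phi}$. The key analytic input is the Fourier (addition-theorem) expansion of the logarithmic kernel: for $r<r_0$ one has $\ln|x-y|=\ln r_0-\sum_{k\geq 1}\frac{1}{k}(r/r_0)^k\cos\big(k(\theta-\phi)\big)$, and for $r>r_0$ the same identity with $r$ and $r_0$ interchanged in the prefactor and in the ratio. Inserting either expansion into $\mathcal{S}_D[e^{in\theta}](x)=\frac{r_0}{2\pi}\int_0^{2\pi}\ln|x-y|\,e^{in\phi}\,\d\phi$ and integrating term by term, the orthogonality relation $\int_0^{2\pi}\cos\big(k(\theta-\phi)\big)e^{in\phi}\,\d\phi=\pi e^{in\theta}\delta_{k,|n|}$ collapses the series to its single surviving mode $k=|n|$ (the $\ln r_0$, respectively $\ln r$, term drops out since $\int_0^{2\pi}e^{in\phi}\,\d\phi=0$ for $n\neq 0$). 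This yields the closed forms $\mathcal{S}_D[e^{in\theta}](x)=-\frac{r_0}{2|n|}(r/r_0)^{|n|}e^{in\theta}$ for $r<r_0$ and $-\frac{r_0}{2|n|}(r_0/r)^{|n|}e^{in\theta}$ for $r>r_0$.

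With these explicit expressions in hand, applying $\partial/\partial r$ and simplifying the powers of $r$ produces the two cases of the lemma directly. As a consistency check I would verify that the interior and exterior radial derivatives have the expected one-sided limits as $r\to r_0^{\mp}$, which must reproduce the jump relation (\ref{jump_relation2}) together with Lemma \ref{lemma4.5} (namely $\mathcal{K}_D^*[e^{in\theta}]=0$ for $n\neq 0$); this pins down the constants and guards against sign errors.

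The step I expect to be the main obstacle is not the algebra but the rigorous justification of the term-by-term integration: the series for $\ln|x-y|$ converges only conditionally on $|x|=r_0$, so the interchange of sum and integral must be carried out away from the circle, where the geometric ratio $r/r_0$ or $r_0/r$ is strictly less than one and convergence is uniform on compact subsets, with the boundary values then recovered as one-sided limits. An alternative that sidesteps the kernel expansion entirely is to exploit that $\mathcal{S}_D[e^{in\theta}]$ is harmonic off $\partial D$, continuous across it by (\ref{jump_relation1}), bounded inside and decaying at infinity; the separated-variable ansatz $a_n r^{|n|}e^{in\theta}$ inside and $b_n r^{-|n|}e^{in\theta}$ outside, matched by continuity and by the unit jump of the normal derivative of the single layer, determines $a_n$ and $b_n$ from a $2\times 2$ linear system and reproduces the same closed forms.
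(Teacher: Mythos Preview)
The paper does not actually prove this lemma: it is quoted, together with Lemma~\ref{lemma4.5}, from the reference \cite{ammari2013mathematical} and used as a black box in the proof of Theorem~\ref{theorem4.11}. So there is no ``paper's own proof'' to compare against, and your proposal supplies what the paper omits.

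Your approach is the standard one and is correct. The Fourier expansion of $\ln|x-y|$ for two points in polar coordinates, combined with the orthogonality $\int_0^{2\pi}\cos\bigl(k(\theta-\phi)\bigr)e^{in\phi}\,\d\phi=\pi e^{in\theta}\delta_{k,|n|}$, is exactly how one obtains the closed form for $\mathcal{S}_D[e^{in\theta}]$, and the radial differentiation is then elementary. Your observation that the case $n=0$ must be excluded is correct and relevant, since the densities in the paper live in $L^2_0$. Your remark about justifying the interchange of sum and integral away from $\partial D$ (uniform convergence on compact subsets) and your alternative argument via harmonic matching through the jump relations are both sound; either one completes the proof rigorously.

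One caution: if you carry out the radial differentiation carefully in the exterior case $r>r_0$, you obtain $+\tfrac{1}{2}(r_0/r)^{|n|+1}e^{in\theta}$, not the $-\tfrac{1}{2}$ printed in the lemma. Your own consistency check via the jump relation~(\ref{jump_relation2}) confirms this: with $\mathcal{K}_D^*[e^{in\theta}]=0$ one needs $\partial_\nu\mathcal{S}_D|_+ - \partial_\nu\mathcal{S}_D|_- = e^{in\theta}$, which forces opposite signs on the two sides of $\partial D$. The paper's own application in the proof of Theorem~\ref{theorem4.11} in fact uses the $+$ sign for the exterior case, so the discrepancy appears to be a typographical slip in the lemma statement rather than an error in your reasoning.
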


    \begin{thm}
    \label{theorem4.11}
    The interfaces $\partial D_1 $, $\partial D_2 $ of bounded domains $D_1$, $D_2 \subset \mathbb{R} ^2 $ are circles with radius $r_1$, $r_2$ respectively, then we derive the eigenvalues of the NP-type operator $\mathbb{K} _{\delta _1 , \delta _2}^{*}$ to be
    \begin{equation*}
    	\lambda _{n}=\pm \frac{1}{2}\left( \dfrac{\delta _1}{\delta _2} \right) ^{|n|}\left( \dfrac{r _1}{r _2} \right) ^{|n|},\quad  n=1,2,\cdots
    \end{equation*}
    and the corresponding eigenvectors are
    \begin{equation*}
        \left[
    		\mathrm{e}^{\pm \mathrm{i}n\theta} \quad
    		\pm \frac{\delta _1}{\delta _2}\frac{r _1}{r _2}\mathrm{e}^{\pm \mathrm{i}n\theta}
        \right]^\mathrm{T}
    , \quad n=1,2,\cdots
    \end{equation*}
    \end{thm}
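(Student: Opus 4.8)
The plan is to exploit the rotational symmetry of the concentric configuration and diagonalize $\mathbb{K}_{\delta_1,\delta_2}^*$ in the angular Fourier basis $\{\mathrm{e}^{\mathrm{i}n\theta}\}_{n\neq 0}$ of $L_0^2$ on each circle. Since both $\partial D_1$ and $\partial D_2$ are centered at the origin, every entry of $\mathbb{K}_{\delta_1,\delta_2}^*$ should act as a Fourier multiplier, so the infinite-dimensional eigenvalue problem decouples into a family of $2\times 2$ problems indexed by $n$. For each fixed $n$ I would take the ansatz $\varPhi = (\mathrm{e}^{\mathrm{i}n\theta},\, c\,\mathrm{e}^{\mathrm{i}n\theta})^{\mathrm T}$, the first entry living on $\partial D_1$ and the second on $\partial D_2$, and solve for the constant $c$ and the eigenvalue simultaneously.

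First I would dispatch the diagonal blocks: by Lemma \ref{lemma4.5} the operators $-\mathcal{K}_{D_1}^*$ and $\mathcal{K}_{D_2}^*$ annihilate $\mathrm{e}^{\mathrm{i}n\theta}$ for every $n\neq 0$, so only the off-diagonal single-layer terms contribute. For the $(1,2)$ block I must evaluate $\partial\mathcal{S}_{D_2}[\mathrm{e}^{\mathrm{i}n\theta}]/\partial\nu_1$ at the rescaled point $\tfrac{\delta_1}{\delta_2}x$ with $x\in\partial D_1$; since the core lies strictly inside the shell we have $\tfrac{\delta_1}{\delta_2}r_1 < r_2$, so this is an interior evaluation and Lemma \ref{lemma4.6} contributes the factor $(\tfrac{\delta_1 r_1}{\delta_2 r_2})^{|n|-1}$. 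For the $(2,1)$ block I evaluate $\partial\mathcal{S}_{D_1}[\mathrm{e}^{\mathrm{i}n\theta}]/\partial\nu_2$ at $\tfrac{\delta_2}{\delta_1}x$ with $x\in\partial D_2$; now $\tfrac{\delta_2}{\delta_1}r_2 > r_1$, an exterior evaluation, contributing $(\tfrac{\delta_1 r_1}{\delta_2 r_2})^{|n|+1}$, with the exterior sign fixed by the jump relation \eqref{jump_relation2}. The scaling prefactors $\tfrac{\delta_2}{\delta_1}$ and $\tfrac{\delta_1}{\delta_2}$ attached to the two blocks cancel against the chain-rule factor produced when the rescaled argument is differentiated, so each off-diagonal block reduces to multiplication by a scalar times $\mathrm{e}^{\mathrm{i}n\theta}$.

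Writing $\rho := \tfrac{\delta_1 r_1}{\delta_2 r_2}$, the eigenvalue equation $\mathbb{K}_{\delta_1,\delta_2}^*\varPhi = \lambda\varPhi$ then collapses to the anti-diagonal $2\times 2$ system $\lambda = \tfrac{1}{2}\rho^{|n|-1}c$ together with $\lambda c = \tfrac{1}{2}\rho^{|n|+1}$. Multiplying the two relations gives $\lambda^2 = \tfrac{1}{4}\rho^{2|n|}$, so $\lambda_n = \pm\tfrac{1}{2}\rho^{|n|} = \pm\tfrac{1}{2}(\tfrac{\delta_1}{\delta_2})^{|n|}(\tfrac{r_1}{r_2})^{|n|}$, and back-substitution yields $c = \pm\rho = \pm\tfrac{\delta_1}{\delta_2}\tfrac{r_1}{r_2}$, which are exactly the asserted eigenpairs. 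Because every block operator is a Fourier multiplier on $\{\mathrm{e}^{\mathrm{i}n\theta}\}_{n\neq 0}$, these $2\times 2$ blocks (over $\pm n$) exhaust $\mathcal{H}^2$, so the list above is the complete spectrum.

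The delicate step is the bookkeeping of the scalings and signs in the two cross terms. One must correctly identify the interior-versus-exterior placement of each rescaled evaluation point—this is precisely what produces the asymmetric exponents $|n|-1$ and $|n|+1$—and must take the exterior normal derivative with the sign dictated by \eqref{jump_relation2}, so that the product of the two off-diagonal entries is positive and the eigenvalues emerge real, as self-adjointness on $\mathcal{H}^2$ requires. Once these factors are reconciled, the $\pm$ splitting and the explicit ratio $\tfrac{\delta_1 r_1}{\delta_2 r_2}$ follow directly.
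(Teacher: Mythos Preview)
Your proposal is correct and follows essentially the same route as the paper's proof: Fourier ansatz on each circle, Lemma~\ref{lemma4.5} to kill the diagonal blocks, Lemma~\ref{lemma4.6} with the interior/exterior dichotomy for the off-diagonal blocks, and then the $2\times 2$ anti-diagonal eigenproblem in $\rho=\tfrac{\delta_1 r_1}{\delta_2 r_2}$. One small remark: the sign in the exterior case is not really ``fixed by the jump relation~\eqref{jump_relation2}'' (that relation governs the one-sided limits on $\partial D$ itself), but rather by direct differentiation of the exterior multipole $r^{-|n|}$ versus the interior $r^{|n|}$; this is cosmetic and does not affect the argument.
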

    \begin{proof}
    Let the eigenvector of $\mathbb{K} _{\delta _1 , \delta _2}^{*}$ be
    $\left[
    	a\mathrm{e}^{\pm \mathrm{i}n\theta} \quad
    	b\mathrm{e}^{\pm \mathrm{i}n\theta}
     \right]^\mathrm{T}$, where $a$ and $b$ are undetermined coefficients. Let $x \in \partial D_1=\left\{ x~|~ |x|=r_1 \right\}$ , $ y \in \partial D_2=\left\{ y~|~|y|=r_2 \right\} $, then $\frac{\delta _1}{\delta _2} r_1<r_2$. From Lemma \ref{lemma4.6}, we obtain
    \begin{equation*}
    	\frac{\partial \mathcal{S} _{D_2}\left( \frac{\delta _1}{\delta _2} \right)\left[ \mathrm{e}^{\mathrm{i}n\theta} \right]}{\partial \nu _1}  \left( x \right)  = -\dfrac{1}{2}\left( \dfrac{\delta _1}{\delta _2} \right) ^{|n|}\left( \dfrac{r _1}{r _2} \right) ^{|n|-1}\mathrm{e}^{\mathrm{i}n\theta}\left( x \right),
    \end{equation*}
    next, let $x \in \partial D_2=\left\{ x~|~|x|=r_2 \right\}$, $y \in \partial D_1=\left\{ y~|~|y|=r_1 \right\} $, then $\frac{\delta _2}{\delta _1} r_2>r_1$. From Lemma \ref{lemma4.6} again, we obtain
    \begin{equation*}
    	\frac{\partial \mathcal{S} _{D_1}\left( \frac{\delta _2}{\delta _1} \right)\left[ \mathrm{e}^{\mathrm{i}n\theta} \right]}{\partial \nu _2}  \left( x \right)=\dfrac{1}{2}\left( \frac{\delta _2}{\delta _1} \right) ^{|n|}\left( \frac{r _2}{r _1} \right) ^{|n|+1}\mathrm{e}^{\mathrm{i}n\theta}\left( x \right).
    \end{equation*}
    Utilizing Lemma \ref{lemma4.5}, the eigenvector satisfies the following identity
    \begin{equation*}
    	\left[ \begin{matrix}
    		0 &	\dfrac{1}{2}\left( \dfrac{\delta _1}{\delta _2} \right) ^{|n|-1}\left( \dfrac{r _1}{r _2} \right) ^{|n|-1} \vspace{2ex} \\
    		\dfrac{1}{2}\left( \dfrac{\delta _1}{\delta _2} \right) ^{|n|+1}\left( \dfrac{r _1}{r _2} \right) ^{|n|+1} &	0\\
    	\end{matrix} \right]
    	\left[ \begin{array}{c}
    		a\mathrm{e}^{\pm \mathrm{i}n\theta} \vspace{2ex} \\
    		b\mathrm{e}^{\pm \mathrm{i}n\theta}\\
    	\end{array} \right]
    	=\lambda _{n}
    	\left[ \begin{array}{c}
    		a\mathrm{e}^{\pm \mathrm{i}n\theta} \vspace{2ex} \\
    		b\mathrm{e}^{\pm \mathrm{i}n\theta}\\
    	\end{array} \right] .
    \end{equation*}
    It is easy to solve for the eigenvalues and eigenvectors of $\mathbb{K} _{\delta _1 , \delta _2}^{*}$.
    \end{proof}

    \begin{rem}
    From Theorem \ref{theorem4.11}, for concentric disk-shaped metallic nanoshell, the plasmon resonance frequency $\omega _{n \pm }\left( \delta _1 , \delta _2 \right)$ can be more specifically expressed as follows
    \begin{equation*}
    	\omega _{n\pm}\left( \delta _1 , \delta _2 \right)=\frac{\omega _p}{\sqrt{2}}\sqrt{1\pm \left( \frac{\delta _1}{\delta _2} \right) ^{|n|}\left( \frac{r _1}{r _2} \right) ^{|n|}}, \quad  n=1,2,\cdots.
    \end{equation*}
    \end{rem}

    \begin{rem}
     Resonance bandwidth $\Delta\omega$ is defined mathematically as
     \begin{equation*}
    	\Delta\omega = \omega_{\text{up}} - \omega_{\text{low}},
    \end{equation*}
    where $\omega_{\text{up}}$ and $\omega_{\text{low}}$ represent the upper and lower resonance frequency limits, respectively, which quantify the frequency range over which the resonant system exhibits a significant response.
    Considering the ratio of scale factors $\delta _1 $, $ \delta _2$ in the range of $\left( 0, r_2 / r_1 \right)$, the concentric disk-shaped metal nanoshell can be easily seen theoretically as $\Delta\omega$ varies in the range of $\left( 0, \omega _p \right)$. Thus, the resonance bandwidth reflects its energy dissipation properties as well as the versatility of the scale factor in applications such as sensing, imaging and molecular recognition.
    \end{rem}

\section{Numerical results}\label{sec:num}

In this section, we present a series of numerical experiments designed to validate the theoretical results, which demonstrate a strong agreement. By theoretical analysis, for concentric disc-shaped metallic nanoshells with normal perturbations, only the eigenvalues of the NP-type operator $\mathbb{K} _{\delta_1,\delta_2 ,\epsilon_1, \epsilon_2}^{*}$ are needed to obtain the plasmon resonance modes of the structure. First, we provide the values of the geometrical and physical parameters utilized in the finite element simulations before presenting the numerical results, setting the bulk plasmon frequency in the Drude's model $\omega _p = 9.06$ eV. Perturbations were applied to four distinct sets of shape functions:
\begin{itemize}
    \item \textbf{Set 1}:
    $h_1(t) = 0.5\cos(4t)$,
    $h_2(t) = -\sin(3t)$;

    \item \textbf{Set 2}:
    $h_1(t) = 0.5\sin(3t) + \sin(6t) - 0.5\sin(7t)$,
    $h_2(t) = -0.5\cos(4t) + \cos(5t) + 0.5\cos(7t)$;

    \item \textbf{Set 3}:
    $h_1(t) = 0.5\sin(t) + 0.5\sin(3t) - \sin(8t)$,
    $h_2(t) = -0.5\cos(t) - \cos(5t) + 0.5\cos(12t)$;

    \item \textbf{Set 4}:
    $h_1(t) = -\cos(6t)$,
    $h_2(t) = -\sin(4t)$.
\end{itemize}
The resulting nanostructures are illustrated in Figure \ref{figure_4shapefunction}.
\begin{figure}[htpb]
\centering
\subfigure[]{
\begin{minipage}[b]{0.23\textwidth}
\includegraphics[width=1\linewidth]{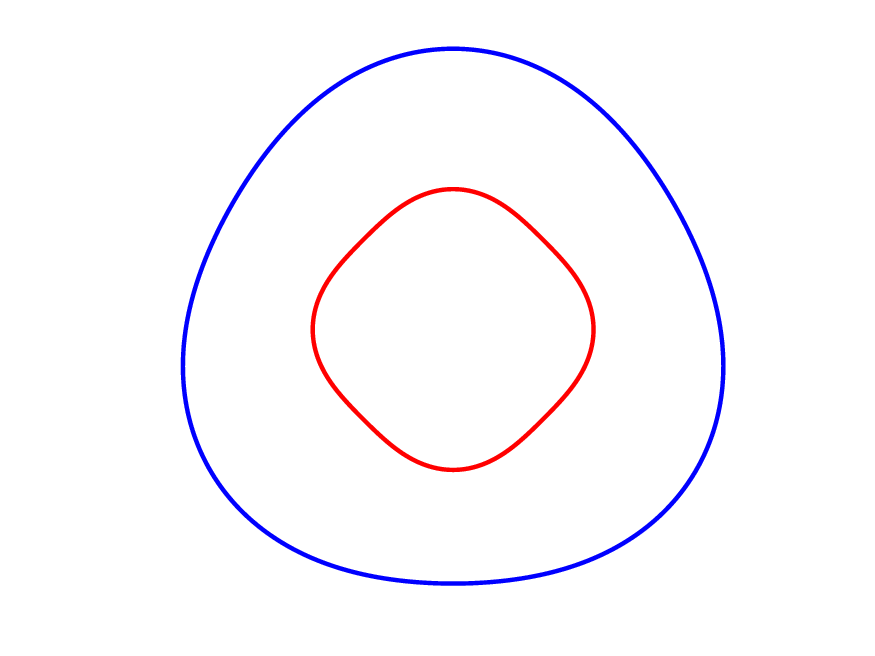}
\end{minipage}}
\subfigure[]{
\begin{minipage}[b]{0.23\textwidth}
\includegraphics[width=1\linewidth]{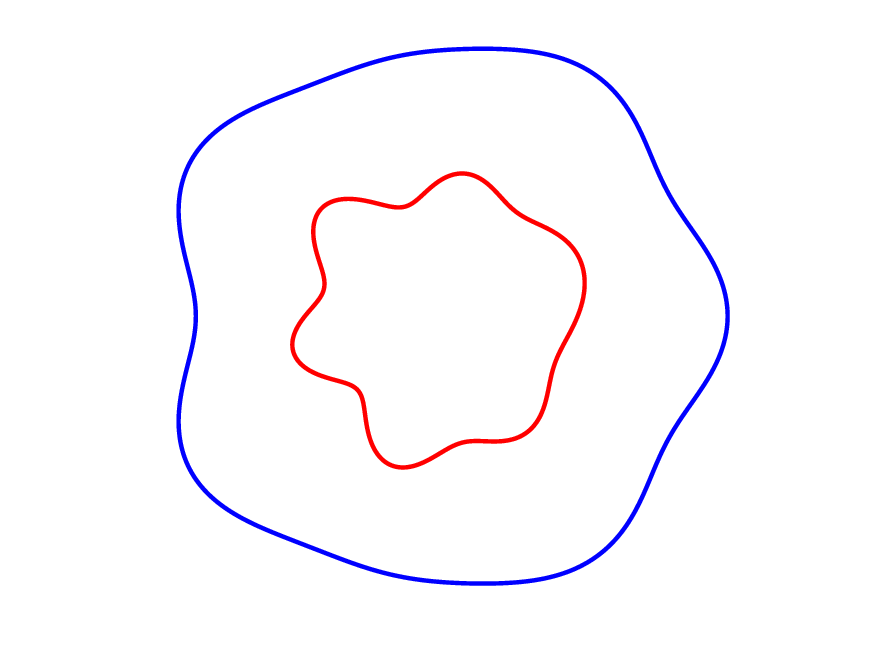}
\end{minipage}}
\subfigure[]{
\begin{minipage}[b]{0.23\textwidth}
\includegraphics[width=1\linewidth]{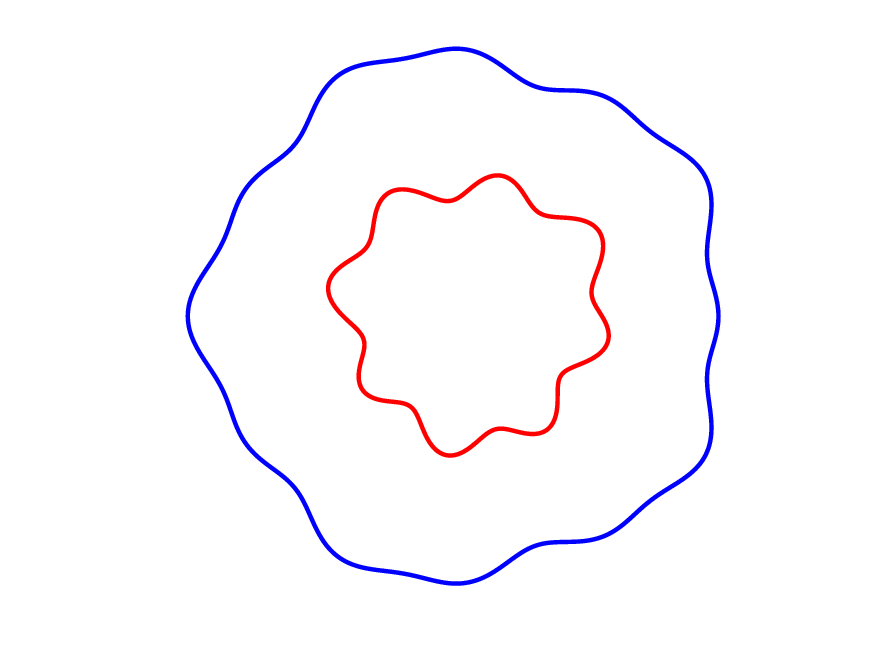}
\end{minipage}}
\subfigure[]{
\begin{minipage}[b]{0.23\textwidth}
\includegraphics[width=1\linewidth]{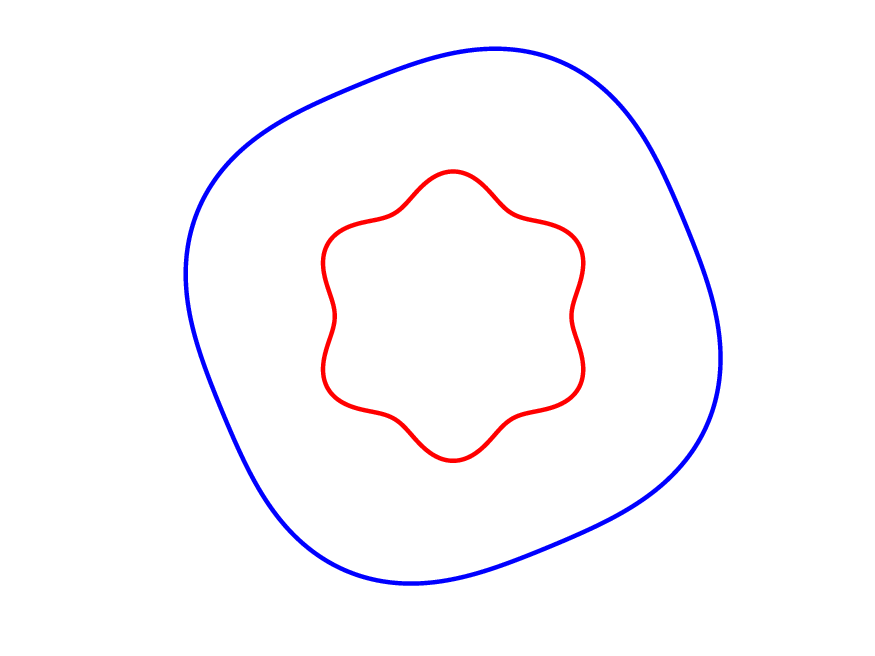}
\end{minipage}}
\caption{Schematic diagrams of nanostructures defined by four different shape functions. (a)-(d) correspond to Set 1-4, respectively.}
\label{figure_4shapefunction}
\end{figure}

Now, we consider the variation of the resonance frequency with the scale factor $\delta_1 \rightarrow 0$ as the metal nanoshell approaches the solid plasmon mode, and set the normal perturbation parameters $\epsilon _1 = \epsilon _2 = 0.01$. In the numerical experiments, we let the scale factor $\delta_1$ decrease gradually by $2^{m}$, (as $m\rightarrow -\infty$). Figure \ref{figure_solid} (a) and (b) present the numerical results for Set 1 of shape functions, while (c) and (d) correspond to Set 2 and Set 3, respectively.
Figure \ref{figure_solid} (a) displays the scattering intensity $|u^s|^2$ as a function of frequency (Hz) for various configurations, specifically with $\delta_1$  values of $\delta_1 = 2^{5},2^{4},2^{3}, 2^{2}, 2^{1}, 2^{0}, 2^{-1}, 2^{-2}$ and $2^{-3}$. Each curve is color-coded to represent different  $\delta_1$  values, allowing for a clear comparison of how varying $\delta_1$  influences the resonance frequencies. The peaks in the graph indicate the frequencies at which the system exhibits heightened response, suggesting the presence of resonant modes.
Figure \ref{figure_solid} (b)-(d) demonstrate the effect of the change in the scale $\delta_1$ of the metal nanocore on the hybridization strength under three different sets of shape functions when approaching the solid plasmon mode.  The discrete data points denote the resonance frequencies $\tilde{\omega}_{n\pm}$, while the black solid lines represent the resonance frequencies $\tilde{\omega}_{s,n}$ of the solid plasmon.
All cases reveal that as $\delta_1$ decays exponentially, the high-energy antibonding plasmon resonance frequencies $\tilde{\omega}_{n+}$ decrease while the low-energy bonding plasmon resonance frequencies $\tilde{\omega}_{n-}$ increase, with all asymptotically approaching the solid plasmon resonance frequency $\tilde{\omega}_{s,n}$.
It is evident that the resonance frequencies $\tilde{\omega}_{n\pm}$ exhibit distinct variations in both amplitude and quantity, which are intrinsically correlated with the inherent properties of the shape functions.
\begin{figure}[htpb]
\centering
\subfigure[]{
\begin{minipage}[b]{0.48\textwidth}
\includegraphics[width=1\linewidth]{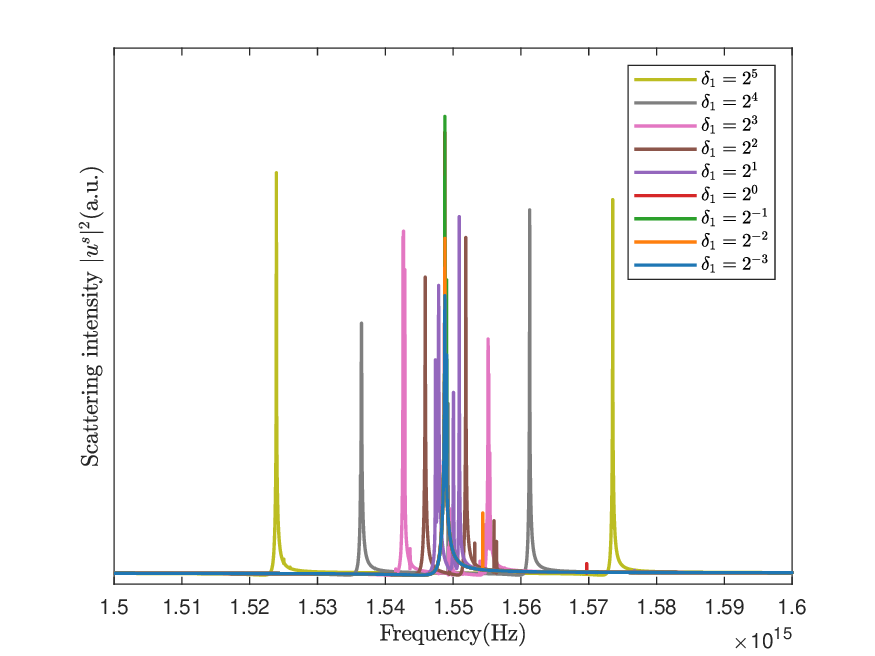}
\end{minipage}}
\subfigure[]{
\begin{minipage}[b]{0.48\textwidth}
\includegraphics[width=1\linewidth]{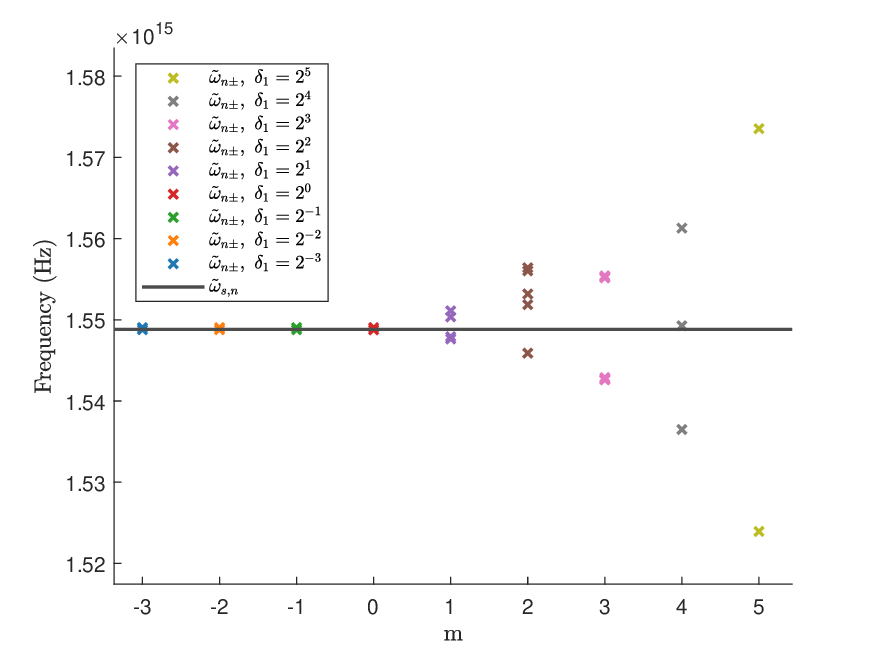}
\end{minipage}} \\
\subfigure[]{
\begin{minipage}[b]{0.48\textwidth}
\includegraphics[width=1\linewidth]{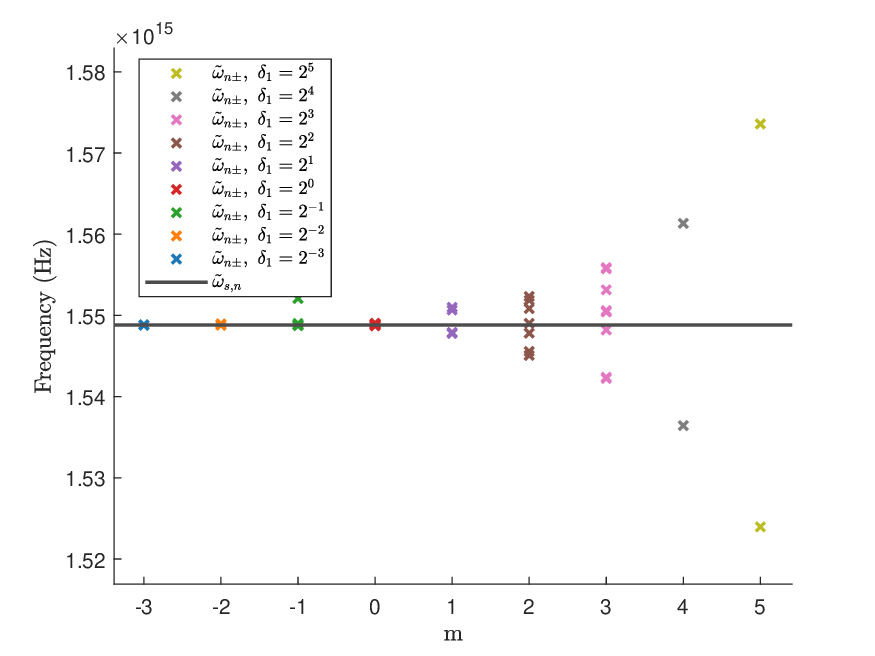}
\end{minipage}}
\subfigure[]{
\begin{minipage}[b]{0.48\textwidth}
\includegraphics[width=1\linewidth]{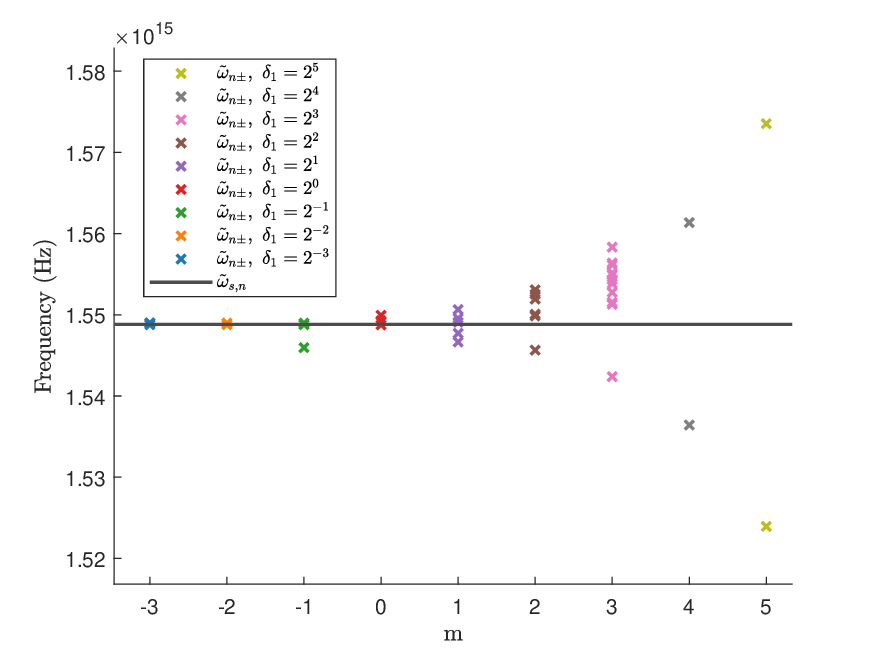}
\end{minipage}}
\caption{Hybridization behavior of metal nanoshells approaching the solid plasmon mode.  (a) Spectral response with respect to $\omega$ in Set 1 ($\delta_2$ fixed, $\delta_1 = 2^{5},2^{4},2^{3}, 2^{2}, 2^{1}, 2^{0}, 2^{-1}, 2^{-2}, 2^{-3}$). (b-d) Hybridization intensity of solid-cavity plasmon as a function of $\delta_1$ for Sets 1-3, respectively.}
\label{figure_solid}
\end{figure}

\begin{figure}[htpb]
\centering
\subfigure[]{
\begin{minipage}[b]{0.48\textwidth}
\includegraphics[width=1\linewidth]{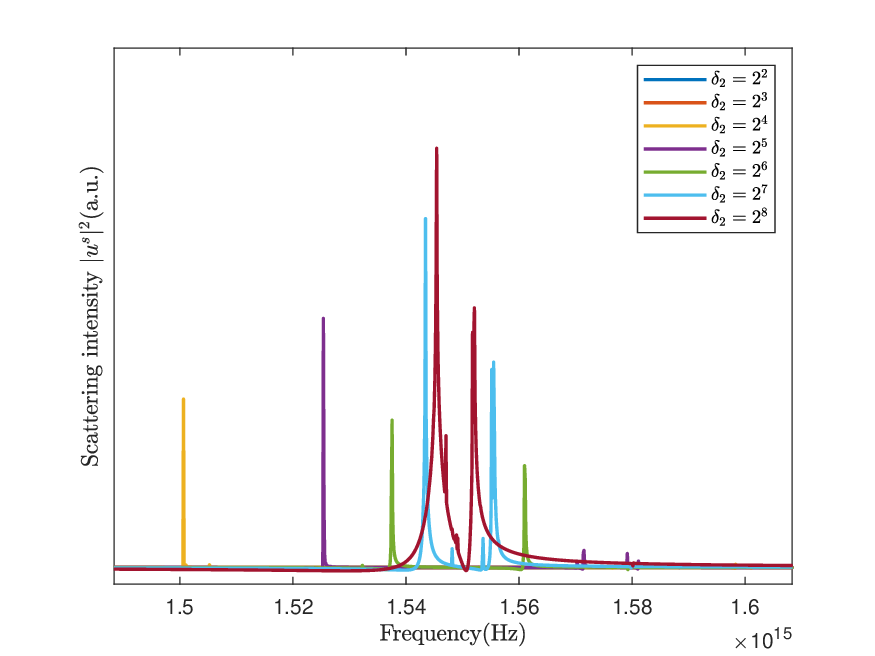}
\end{minipage}}
\subfigure[]{
\begin{minipage}[b]{0.48\textwidth}
\includegraphics[width=1\linewidth]{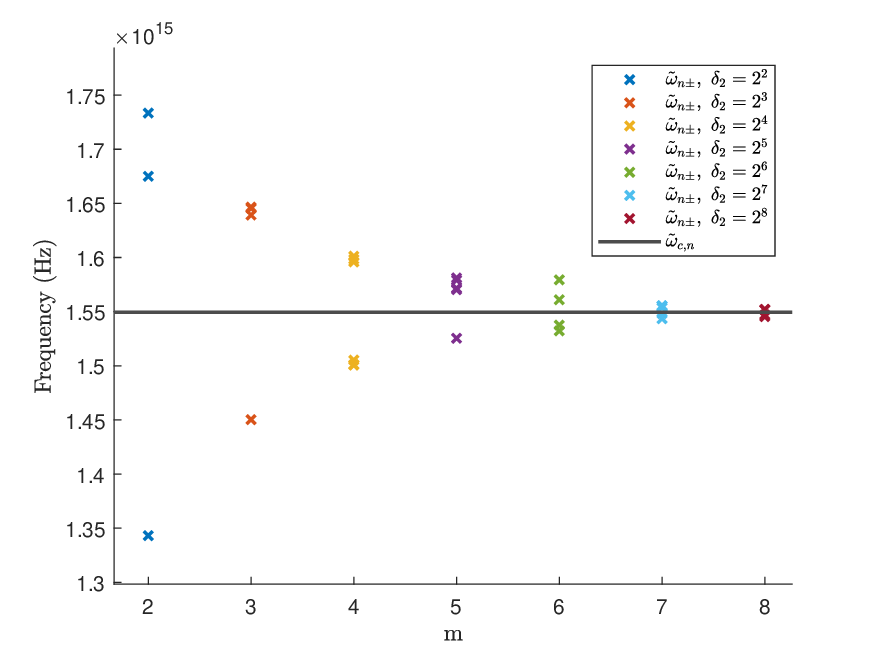}
\end{minipage}} \\
\subfigure[]{
\begin{minipage}[b]{0.48\textwidth}
\includegraphics[width=1\linewidth]{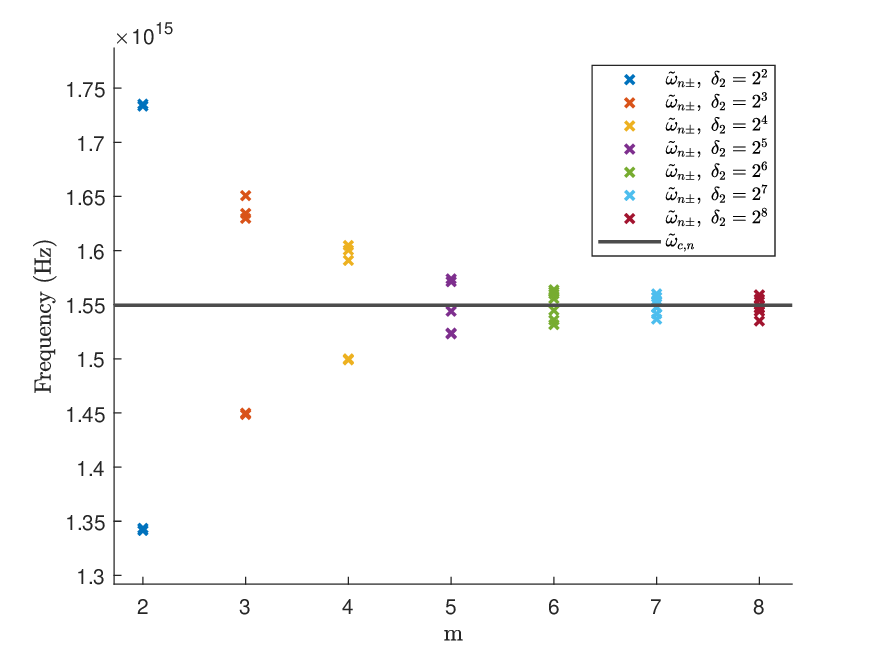}
\end{minipage}}
\subfigure[]{
\begin{minipage}[b]{0.48\textwidth}
\includegraphics[width=1\linewidth]{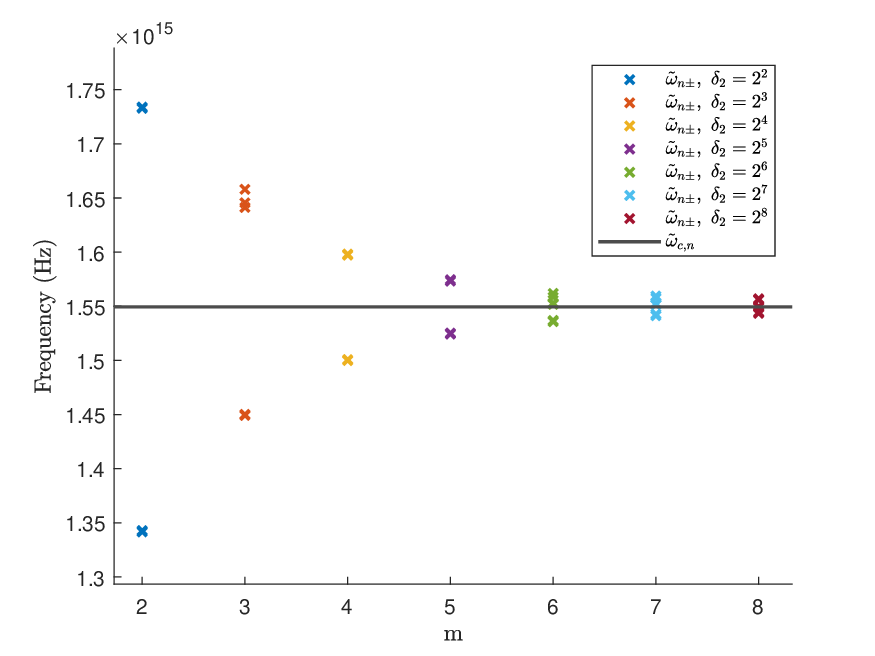}
\end{minipage}}
\caption{Hybridization behavior of metal nanoshells approaching the cavity plasmon mode.  (a) Spectral response with respect to $\omega$ in Set 1 ($\delta_1$ fixed, $\delta_2 = 2^{2}, 2^{3}, 2^{4}, 2^{5}, 2^{6}, 2^{7}, 2^{8}$). (b-d) Hybridization intensity of solid-cavity plasmon as a function of $\delta_2$ for Sets 1-3, respectively.}
\label{figure_cavity}
\end{figure}

Next, we considered the variation of the resonance frequency with the scale factor $\delta_2 \rightarrow \infty $ when the metal nanoshell approaches the cavity plasmon mode, and similarly set the normal perturbation parameter $\epsilon _1 = \epsilon _2 = 0.01$. Instead, we let the scale factor $\delta_1$ gradually increase by $ 2^m $, (as $m\rightarrow\infty$). Figure \ref{figure_cavity} (a) and (b) present the numerical results for Set 1 of shape functions, while (c) and (d) correspond to Set 2 and Set 3, respectively.
It has similar numerical results to the former, as can be seen in Figure \ref{figure_cavity} (b)-(d), where the effect of the variation of the scale factor $\delta_2$ on the hybridization strength makes the high-energy antibonding plasmon resonance frequencies $\tilde{\omega}_{n+}$ increasingly lower as the cavity plasmon mode is approached, while the low-energy bonding plasmon resonance frequencies  $\tilde{\omega}_{n-}$ grow higher and higher with the exponential growth of the scaling factor $\delta_2$, and all approach the cavity plasmon resonance frequency $\tilde{\omega}_{c,n}$ simultaneously and infinitely.
Similarly, the resonance frequencies
$\tilde{\omega}_{n\pm}$ exhibit distinct variations in both amplitude and quantity across different shape functions.

Finally, we systematically investigate how the perturbation intensity enriches the resonance frequencies under the Set 4 of shape functions. Figure \ref{figure_solid_per} presents the evolution patterns of resonance frequencies in metal nanoshells under three distinct perturbation intensities ($\epsilon_1 = \epsilon_2 =0.01, 0.05$, and $0.1$) as functions of the scale factor $\delta_1$, with panels (a)-(c) corresponding to each intensity respectively. This trend reflects the system's tendency to approach solid plasmon modes at diminishing $\delta_1$. Notably, for a fixed $m$ (corresponding to a constant scale factor  $\delta_1$), stronger perturbation intensities amplify the separation between the hybridized modes and the solid modes, particularly for $m=3$ and $4$. This observation suggests that enhanced perturbations not only induce frequency shifts but also promote more pronounced plasmon energy splitting and redistribution. The underlying mechanism can be attributed to the perturbation-driven energy redistribution within the system, which preferentially allocates energy to either higher or lower frequencies, exhibiting characteristic features of strong mode mixing.
Figure \ref{figure_cavity_per} presents the evolution of resonance frequencies in metal nanoshells under the aforementioned three perturbation intensities as a function of scale factor $\delta_2$, with panels (a)-(c) corresponding to each intensity level respectively, revealing similar trends.
These findings elucidate the synergistic interplay between perturbation intensity and geometric scaling in modulating plasmon resonance properties. This numerical study provides critical insights into the tunability of plasmon resonance through combined geometric and external perturbation control, offering potential applications in nanophotonic device engineering.

\begin{figure}[htpb]
\centering
\subfigure[]{
\begin{minipage}[b]{0.48\textwidth}
\includegraphics[width=1\linewidth]{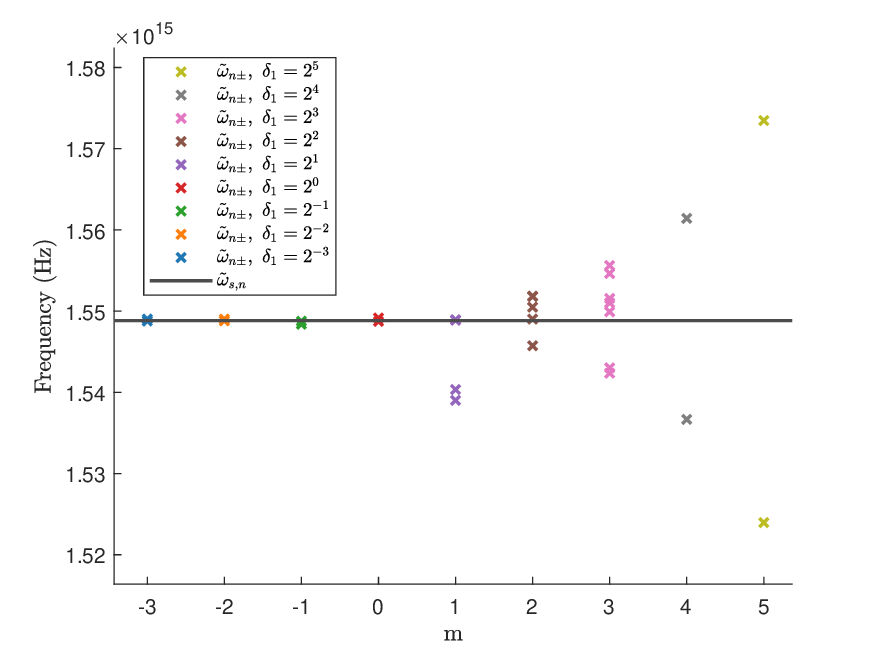}
\end{minipage}}
\subfigure[]{
\begin{minipage}[b]{0.48\textwidth}
\includegraphics[width=1\linewidth]{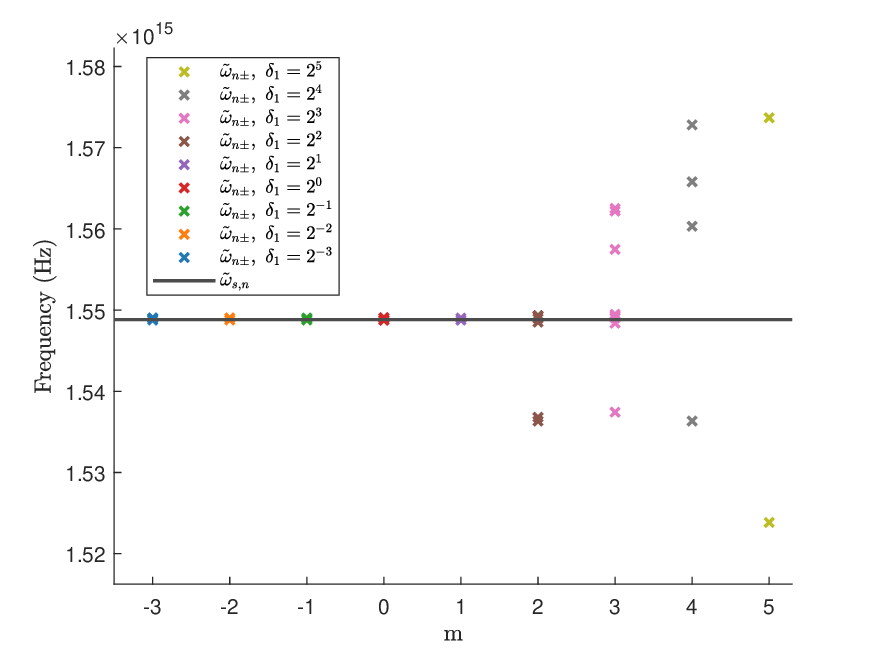}
\end{minipage}}
\subfigure[]{
\begin{minipage}[b]{0.48\textwidth}
\includegraphics[width=1\linewidth]{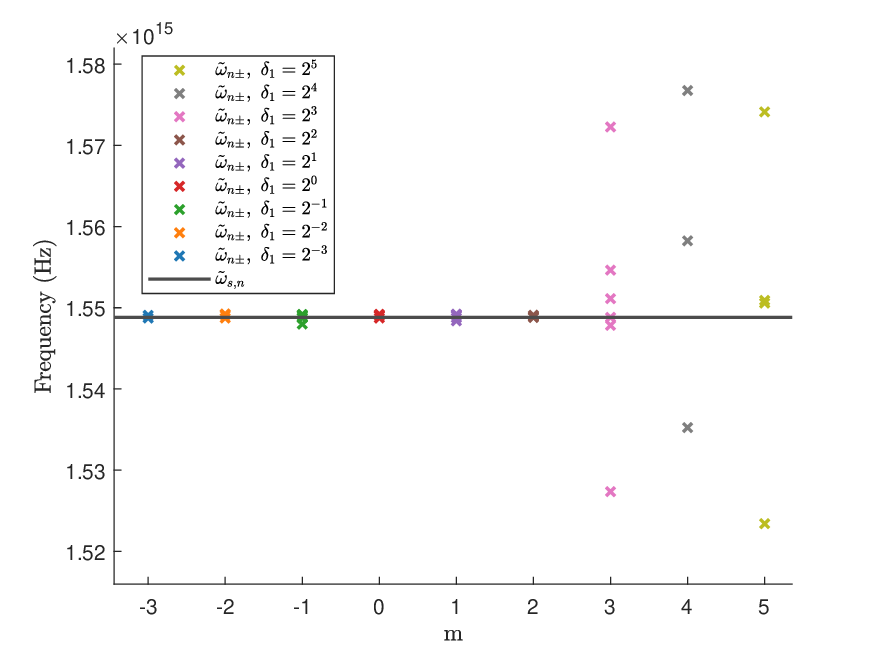}
\end{minipage}}
\caption{Hybridization intensity of solid-cavity plasmon depending on the scale factor $\delta_1$ with different perturbation parameters (a) $\epsilon_1 = \epsilon_2 = 0.01$; (b) $\epsilon_1 = \epsilon_2 = 0.05$; (c) $\epsilon_1 = \epsilon_2 = 0.1$.}
\label{figure_solid_per}
\end{figure}

\begin{figure}[htpb]
\centering
\subfigure[]{
\begin{minipage}[b]{0.48\textwidth}
\includegraphics[width=1\linewidth]{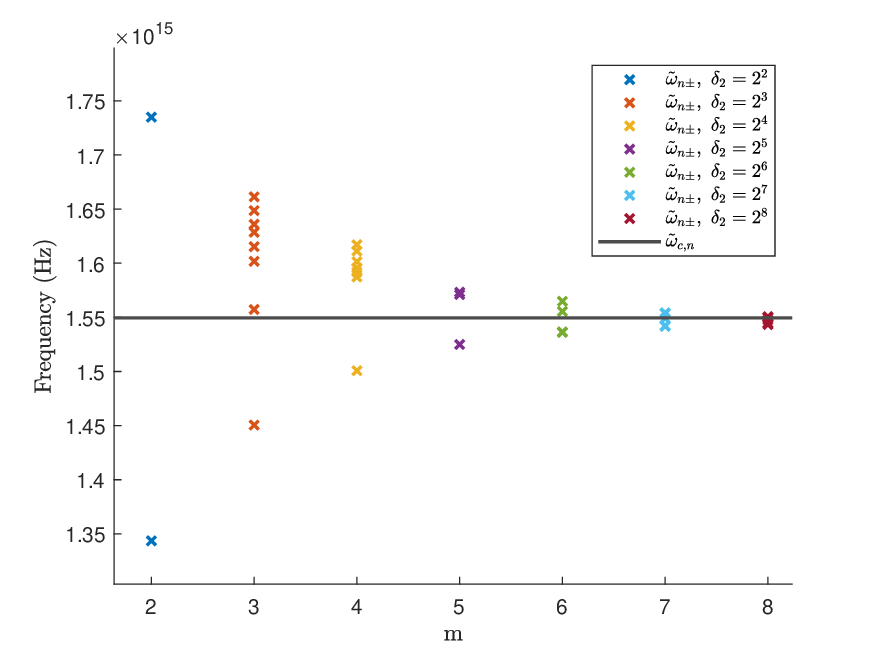}
\end{minipage}}
\subfigure[]{
\begin{minipage}[b]{0.48\textwidth}
\includegraphics[width=1\linewidth]{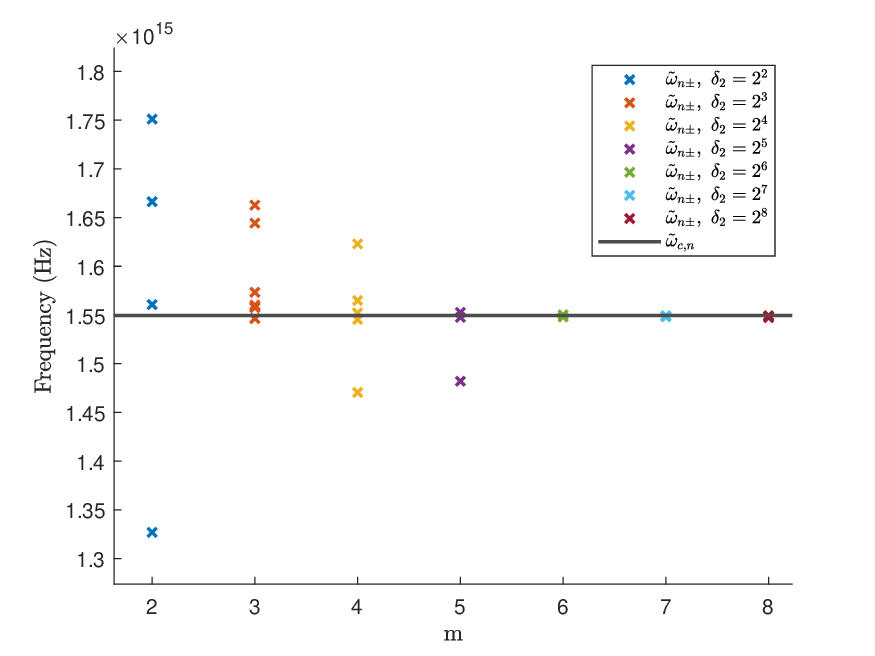}
\end{minipage}}
\subfigure[]{
\begin{minipage}[b]{0.48\textwidth}
\includegraphics[width=1\linewidth]{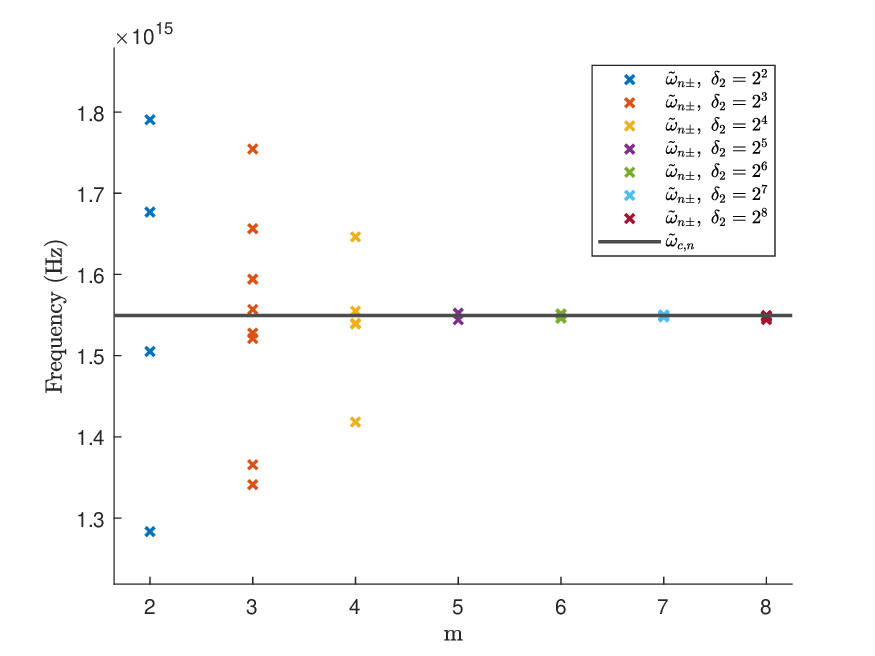}
\end{minipage}}
\caption{Hybridization intensity of solid-cavity plasmon depending on the scale factor $\delta_2$ with different perturbation parameters (a) $\epsilon_1 = \epsilon_2 = 0.01$; (b) $\epsilon_1 = \epsilon_2 = 0.05$; (c) $\epsilon_1 = \epsilon_2 = 0.1$.}
\label{figure_cavity_per}
\end{figure}

\section{Conclusion}

In this paper, we have investigated the plasmonic resonant frequency of a metal nanoshell with a particular geometry and explained in physical terms how this frequency relates to the hybridization of solid and cavity plasmon modes. Our results can be qualitatively and quantitatively applied to guide the design of metal nanoshell structures and predict their resonance properties, thus realizing the value of their applications, and it is validated in numerical experiments. The idea can be extended in several directions: (i) to investigate the situation in three dimensions; (ii) to completely separate the scale factors from the layer potential and discuss nanoparticles with more general geometries; (iii) to generalize the results to elastic systems etc. These new developments will be reported in our forthcoming works.

\section*{Acknowledgements}
\addcontentsline{toc}{section}{Acknowledgments}
The research of H. Liu was supported by NSFC/RGC Joint Research Scheme, N CityU101/21, ANR/RGC
Joint Research Scheme, A-CityU203/19, and the Hong Kong RGC General Research Funds (projects 11311122, 11304224 and 11300821).
The research of Z. Miao was supported by the Hong Kong Scholars Program grant XJ2024057.
The research of G. Zheng was supported by the NSF of China (12271151), NSF of
Hunan (2020JJ4166) and NSF Innovation Platform Open Fund project of Hunan Province (20K030).

\end{document}